\theoremstyle{plain}
\newtheorem{col}{Corollary}[section]
\newtheorem{remark}{\textbf{Remark}}[section]
\newcommand{\eps}{\epsilon}
\newcommand{\bm}{\boldsymbol}
\newcommand{\Grad}[1]{\nabla #1}
\newcommand{\be}{\begin{equation}}
\newcommand{\ee}{\end{equation}}
\newcommand{\bse}{\begin{subequations}}
\newcommand{\ese}{\end{subequations}}
\def\benl{\begin{eqnarray*}}
\def\eenl{\end{eqnarray*}}
\def\bn{\bm{n}}
\def\be{\bm{e}}
\def\bx{\bm{x}}
\def\bz{\bm{z}}
\def\bmu1{\bm{\mu_1}}
\newcommand{\ben}{\begin{eqnarray}}
\newcommand{\een}{\end{eqnarray}}
\newcommand{\beq}{\begin{equation}}
\newcommand{\eeq}{\end{equation}}
\newcommand{\bea}{\begin{array}}
\newcommand{\eea}{\end{array}}
\newcommand{\bef}{\begin{figure}[H]}
\newcommand{\eef}{\end{figure}}
\crefname{hypothesis}{Hypothesis}{Hypotheses}
\title{A new  Lagrange multiplier approach for constructing structure preserving schemes, I. positivity preserving\thanks{This work is partially supported by NSF Grant DMS-2012585 and AFOSR Grant FA9550-20-1-0309.}}
\author{Qing Cheng\thanks{Department of Mathematics, Purdue University, West Lafayette, IN 47907, USA  (cheng573$@$purdue.edu, shen7$@$purdue.edu). }
\and  Jie Shen${}^\dag$}
\begin{document}
\bibliographystyle{plain}
\graphicspath{ {Figures_porous/} }
\maketitle

\begin{abstract}
We propose a new  Lagrange multiplier approach to construct positivity preserving schemes for  parabolic type equations.  The new approach  introduces a space-time Lagrange multiplier to enforce the positivity with the  Karush-Kuhn-Tucker (KKT) conditions. We then use a predictor-corrector approach to construct a class of positivity schemes:  with a generic semi-implicit or implicit scheme as the prediction step, and the correction step, which enforces  the positivity,
  can be implemented  with negligible cost. We also present a modification  which allows us to construct schemes which, in addition to positivity preserving, is also mass conserving. This new approach is not restricted to any particular spatial discretization and can be combined with various time discretization schemes.
We establish  stability results for our  first- and second-order schemes under a general setting, and  present ample numerical results to validate the new approach.

\end{abstract}

\begin{keywords}
positivity preserving; mass conserving; KKT conditions;  Lagrange multiplier; stability
\end{keywords}

\begin{AMS}
65M70; 65K15; 65N22
\end{AMS}

\section{Introduction}
Solutions for a large class of partial differential equations (PDEs) arising from sciences and engineering applications, e.g., solutions for physical variables such as density, concentration, height, population, etc.,  are required to be positive. It is of critical importance that their numerical approximations preserve the positivity of these variables at the discrete level, as violation of the positivity  may render the  discrete problems ill posed, although the original problems are well posed.

In recent years, a large effort has been devoted to construct  positivity preserving schemes for various problems. 
The existing approaches can be roughly classified into the following categories: 
\begin{itemize}
\item Cut-off approach: an ad-hoc approach which simply cuts off the values outside of the desired range. This approach is perhaps used in many simulations without being explicitly mentioned, and it is recently analyzed in \cite{MR3062022,MR4186541} for certain class of time discretization schemes. The main advantages of the cut-off approach is (i) simple to implement, and (ii) it is able to  preserve the accuracy of the underlying numerical schemes for problems with smooth solutions (cf. \cite{MR4186541}). A disadvantage is that it does not preserve mass. 

\item Discrete maximum principle preserving schemes (see, for instance, \cite{du2020maximum} and the references therein):  these schemes are usually based on second-order finite differences or piecewise linear finite elements so they are limited in accuracy, see however some recent work on fourth-order finite differences \cite{MR3880256,MR4107225} applied to  second-order elliptic or parabolic equations.  

\item Post-processing approach:   sophisticated procedures are designed in  \cite{zhang2011maximum,zhang2010positivity} for  hyperbolic systems: these are explicit schemes which are not quite suitable for parabolic systems.
\item Convex splitting approach: for examples, see \cite{chen2019positivity} for Cahn-Hilliard equations with logarithmic potential, \cite{liu2018positivity,hu2020fully} for PNP and Keller-Segel equations. The drawback of this approach is that  a nonlinear system has to be solved at each time step.
\item Reformulation approach: reformulate the problem so that solution of the corresponding discrete problem  is always positive, see, for instance, \cite{liu2018positivity,HuaS21}).
\item  Lagrange multiplier approach: 
 regarding the positivity as a constraint and introducing a Lagrange multiplier to enforce the positivity constraint. This can be regarded as a special case of the constrainted minimization \cite{hinze2008optimization}. The main difficulty of this approach is that the resulting problem is only semi-smooth  which excludes the use of standard Newton method.  Hence, special, and often expensive,  iterative methods such as  semi-smooth Newton method,  have to be used \cite{qi1997semismooth,van2019positivity,hintermuller2002primal}. 
 \end{itemize}


We consider  in this paper a class of linear or nonlinear parabolic  equations with positive solutions in the following form: 
\begin{equation}\label{strong}
 u_t + \mathcal{L}  u=0,
\end{equation}
with  suitable initial and boundary conditions, where $\mathcal{L}$ could be $\mathcal{L}  u=Au+f(u)$ with $A$ being a linear or nonlinear positive operator and $f(u)$  a semi-linear or quasi-linear operator. 
Consider a  generic spatial discretization of \eqref{strong}:
\begin{equation}\label{appro_strong}
\begin{split}
\partial_t u_h + \mathcal{L}_h  u_h=0,
\end{split}
\end{equation}
where $u_h$ is in certain finite dimensional approximation space $X_h$  and $\mathcal{L}_h$ is a certain approximation of $\mathcal{L}$. In general, $u_h$,  if it exists, may not preserve positivity. Oftentimes, \eqref{appro_strong} may not be well posed if $u_h$ can not preserve positivity, e.g., a direct finite elements or spectral approximation to the  porous media equation \cite{vazquez2007porous} $u_t-m\Grad\cdot( u^{m-1}\Grad  u)=0$ with $(m>1)$ can not preserve positivity so it is not well posed. Hence, special efforts have to be devoted to construct spatial  discretization such that \eqref{appro_strong} is positivity preserving. 

Alternatively, we can introduce  a Lagrange multiplier function $\lambda_h(x,t)$ and solve the following expanded system:
\begin{equation}\label{kkt}
\begin{split}
&\partial_t u_h +  \mathcal{L}_h  u_h=\lambda_h,\\
&\lambda_h \ge 0,\; u_h\ge 0,\;\lambda_h u_h=0.
\end{split}
\end{equation}
Note that the second equation in the above represents the well-known Karush-Kuhn-Tucker (KKT) conditions \cite{ito2008lagrange,facchinei2007finite,harker1990finite,bergounioux1999primal} for constrained minimizations. In the absence of time variable, the problem \eqref{kkt} has been well studied mathematically and numerically. However, how to efficiently solve the time dependent  \eqref{kkt} is a completely different issue which has not received much attention. One can of course use an implicit discretization scheme such as backward-Euler or BDF schemes so that at each time step, the nonlinear system can still be interpreted as a  constrained minimization and apply a suitable iterative procedure. But since these constrained minimization problems are only semi-smooth, a delicate and  costly iterative method has to be used. We refer to \cite{van2019positivity} for such an attempt with a diagonally implicit Runge-Kutta  discretization.

We propose  in this paper   an operator splitting approach  for \eqref{kkt} 
to construct efficient and accurate schemes for \eqref{kkt}.  This approach enjoys the following advantages:
\begin{itemize}
\item It allows us to construct  positivity preserving  schemes for a large class of linear or nonlinear parabolic equations with positive solutions, and the schemes can also be made mass conservative if the PDE is mass conserving; 
 \item It can be applied directly to any finite difference discretization or other spatial discretization with a Lagrangian basis;
 \item It has essentially the same computational cost as the corresponding semi-implicit or implicit scheme with the same spatial discretization;
 \item It has good stability property: the first- and second-order versions of our scheme are  proven  to be unconditionally stable for a large class of problems. 
\end{itemize}
Moreover, we show that schemes based on the ad-hoc  cut-off approach can be interpreted as special cases of our approach. Thus, this approach allows us to construct mass conserving schemes based on the cut-off approach, and our analysis  leads to new stability results for the  cut-off approach.  
We shall apply our new schemes to a variety of  problems with positive solutions, including the challenging porous media equation \cite{vazquez2007porous,liu2011high} and the very challenging Lubrication equation \cite{zhornitskaya1999positivity}.

The rest of the paper is organized as follows.  In Section 2, we introduce the positivity and mass preserving schemes with Lagrange multiplier. In Section 3, we carry out stability analysis for the proposed positivity preserving schemes. In Section 4, we present  numerical results for a variety of  problems to validate  our schemes.  Some concluding remarks are given in Section 5.

\section{Positivity and mass preserving schemes with Lagrange multiplier}
We start with a general description of the spatial discretization, followed by the construction of positivity preserving time discretization schemes without and with mass conservation.
\subsection{Spatial discretization}
We now give a more precise description on the generic spatial discretization in \eqref{kkt}. Let $\Sigma_h$ be a set of mesh points or collocation ponts in $\bar\Omega$. Note that $\Sigma_h$ should not include the points at the part of the boundary where a Dirichlet (or essential) boundary condition is prescribed, while it should include the points at the part of the boundary where a Neumann or mixed  (or non-essential) boundary condition is prescribed.

We consider a Galerkin type discretization with finite-elements or spectral methods or  finite-differences with summation-by-parts in a subspace $X_h\subset X$, and 
 define a discrete inner product, i.e. numerical integration, on  $\Sigma_h=\{\bm{z}\}$ in $\bar \Omega$: 
\begin{equation}\label{numint}
  [u,v]=\sum_{\bm{z}\in \Sigma_h} \omega_{\bm{z}}u(\bm{z})v(\bm{z}), 
\end{equation}
where we require that the weights $\omega_{\bm{z}}>0$. We also denote the induced norm by $\|u\|=[u,u]^{\frac 12}$. For finite element methods, the sum should be understood as  $\sum_{K\subset \mathcal{T}}\sum_{\bm{z}\in Z(K)}$  where $\mathcal{T}$ is a given triangulation. 
And we assume that there is a unique function $\psi_{\bm{z}}(\bm {x})$ in $X_h$ satisfying $\psi_{\bm{z}}(\bm{z'})=\delta_{\bm{z}\bm{z'}}$ for $\bm{z},\bm{z'}\in \Sigma_h$.  
Then,  \eqref{kkt} is interpreted   as follows: Find $u_h\in X_h$ such that
\begin{equation}\label{kkt:v}
\begin{split}
&\partial_t u_h(\bz,t) +  \mathcal{L}_h  u_h(\bz,t)=\lambda_h(\bz,t),\quad\forall {\bm z} \in \Sigma_h,\\
&\lambda_h(\bz,t) \ge 0,\; u_h(\bz,t)\ge 0,\;\lambda_h(\bz,t) u_h(\bz,t)=0,\quad\forall {\bm z} \in \Sigma_h,
\end{split}
\end{equation}
with the Dirichlet boundary condition to be satisfied pointwisely if the original problem includes Dirichlet boundary condition at part or all of boundary.

\subsection{Time discretization}


Let  $\mathcal{L}^n_h$ be an approximate operator of $\mathcal{L}$ at $t_n$. For examples, 
 if $\mathcal{L}u=-\nabla \cdot (f(u)\nabla u)$, $\mathcal{L}^n_h$ could be a lagged linear approximation
\begin{equation}\label{lagged}
 \mathcal{L}^n_h \tilde u^{n+1}_h:=-\nabla \cdot (f(u_h^n)\nabla u_h^{n+1}),
\end{equation}
and for  $\mathcal{L}u=Au+f(u)$,  $\mathcal{L}^n_h$ could be a fully implicit discretization
\begin{equation}\label{impli}
 \mathcal{L}^n_h \tilde u^{n+1}_h:=A\tilde u^{n+1}_h+ f(\tilde u^{n+1}_h),
\end{equation}
 or  an implicit-explicit (IMEX) discretization,
\begin{equation}\label{semil}
 \mathcal{L}^n_h \tilde u^{n+1}_h:=A\tilde u^{n+1}_h+ f(\tilde u^{n}_h),
\end{equation}
 or some other type of discretization such as the convex splitting \cite{EllS93} or the SAV  approach \cite{SXY19,cheng2021generalized}.

\subsubsection{First-order operator splitting scheme}
 Let $\lambda_h^0\equiv 0$, for $n\ge 0$, we proceed as follows.

{\bf Step 1}:  solve $\tilde{ u}_h^{n+1}$ from
\begin{equation}\label{first:por:lag:1}
\frac{\tilde{ u}_h^{n+1}(\bz)- u_h^n(\bz)}{\delta t} + \mathcal{L}^n_h\tilde{ u}_h^{n+1}(\bz)= 0
,\quad\forall {\bm z} \in \Sigma_h;
\end{equation}
{\bf Step 2}: solve $ (u_h^{n+1},\lambda_h^{n+1})$ from
\begin{equation}\label{first:por:lag:2}
\begin{split}
&\frac{ u_h^{n+1}(\bz)-\tilde{ u}_h^{n+1}(\bz)}{\delta t} =\lambda_h^{n+1}(\bz),\quad\forall {\bm z} \in \Sigma_h,\\
&\lambda_h^{n+1}(\bz)\ge 0,\; u_h^{n+1}(\bz)\ge 0,\; \lambda_h^{n+1}(\bz) u_h^{n+1}(\bz)=0,\quad\forall {\bm z} \in \Sigma_h.
\end{split}
\end{equation}
The above scheme can be viewed as an operator splitting method. The first step is just a usual time stepping scheme and can be implemented as usual. However, $\tilde u^{n+1}_h$ may not be positive. In the second step, we use the KKT conditions to enforce the positivity of $u^{n+1}_h$.

 A remarkable property of  \eqref{first:por:lag:2}  is that it can be solved pointwisely as follows: 
\begin{equation}
 ( u_h^{n+1}(\bz),\lambda_h^{n+1}(\bz))=\left\{
\begin{array}{rcl}
(\tilde{ u}_h^{n+1}(\bz),0)       &      & {\mbox{if} \quad 0      <  \tilde{ u}_h^{n+1}(\bz)}\\
(0,-\frac{\tilde{ u}_h^{n+1}(\bz)}{\delta t})    &      & \text{otherwise}\\
\end{array}, \right. \quad\forall {\bm z} \in \Sigma_h.
\end{equation}
\begin{remark}\label{rem:cut}
 The second step in the above scheme is equivalent to the simple cut-off approach \cite{MR3062022,MR4186541}:
 \begin{equation}\label{cutoff}
  u_h^{n+1}(\bz)=\begin{cases} \tilde  u_h^{n+1}(\bz) & \text{ if }\; \tilde  u_h^{n+1}(\bz) >0\\
  0  & \text{ if }\; \tilde  u_h^{n+1}(\bz)\le 0\end{cases},  \quad\forall {\bm z} \in \Sigma_h.
\end{equation}
Hence,  the cut-off approach can also be understood from an operator splitting point of view which opens new avenue for analysis and further algorithm improvement.
\end{remark}

\subsubsection{Higher-order schemes}
We can construct higher-order schemes by using a predictor-corrector approach. More precisely, a $k$th-order IMEX scheme based on BDF and Adam-Bashforth  can be constructed as follows:

\noindent{\bf Step 1} (prediction): solve $\tilde u_h^{n+1}$ from
\begin{align}
\frac{\alpha_k \tilde{u}_h^{n+1}-A_k(u_h^n)}{ \delta t}+\mathcal{L}^n_h \tilde u_h^{n+1}=B_{k-1}(\lambda_h^n);\label{por:lag:1}
\end{align}
{\bf Step 2} (correction): solve $(u_h^{n+1},\lambda_h^{n+1})$ from
\begin{equation}\label{por:lag:2}
\begin{split}
&\frac{\alpha_k( u_h^{n+1}(\bz)-\tilde{ u}_h^{n+1}(\bz))}{\delta t} =\lambda_h^{n+1}(\bz)-B_{k-1}(\lambda_h^n),\quad\forall {\bm z} \in \Sigma_h,\\
&\lambda_h^{n+1}(\bz)\ge 0,\; u_h^{n+1}(\bz)\ge 0,\; \lambda_h^{n+1}(\bz)u_h^{n+1}(\bz)=0,\quad\forall {\bm z} \in \Sigma_h,
\end{split}
\end{equation}
where  $\alpha_k,$ the operators $A_k$ and $B_{k-1}$ $(k=2,3,4)$ are given by:

\noindent {\bf First-order:}
\begin{equation}\label{eq:bdf1}
\alpha_1=1, \quad A_1(u_h^n)=u_h^n,\quad B_0(\lambda_h^n)=0;
\end{equation}
\noindent {\bf Second-order:}
\begin{equation}\label{eq:bdf2}
\alpha_2=\frac{3}{2}, \quad A_2(u_h^n)=2u_h^n-\frac{1}{2}u_h^{n-1},\quad B_1(\lambda_h^n)=\lambda_h^n;
\end{equation}
\noindent {\bf Third-order:}
\begin{equation}\label{eq:bdf3}
\alpha_3=\frac{11}{6}, \quad A_3(u_h^n)=3u_h^n-\frac{3}{2}u_h^{n-1}+\frac{1}{3}u_h^{n-2},\quad B_2(\lambda_h^n)=2\lambda_h^n-\lambda_h^{n-1};
\end{equation}
\noindent {\bf Fourth-order:}
\begin{equation}\label{eq:bdf4}
\alpha_4=\frac{25}{12}, \; A_4(u_h^n)=4u_h^n-3u_h^{n-1}+\frac{4}{3}u_h^{n-2}-\frac{1}{4}u_h^{n-3},\; B_3(\lambda_h^n)=3\lambda_h^n-3\lambda_h^{n-1}+\lambda_h^{n-2}.\end{equation}
The formulae for $k=5,6$ can be derived similarly with Taylor expansions. For the sake of simplicity and  with a slight abuse of notations, we used $A_k(u_h^n)$  and $B_k(u_h^n)$ to denote $A_k(u_h^n,\cdots,u_h^{n-k+1})$ and
$B_k(u_h^n,\cdots,u_h^{n-k+1})$, respectively.
Note that for $k=1$, the above scheme is exactly \eqref{first:por:lag:1}-\eqref{first:por:lag:2}.

 The first-step is a usual $k$th-order IMEX scheme. The second step \eqref{por:lag:2} can be viewed as a correction step in which $\lambda_h^{n+1}$ is introduced to enforce  the pointwise positivity of $u_h^{n+1}$, and can  be efficiently solved as follows:
\begin{equation}
 ( u_h^{n+1}(\bz),\lambda_h^{n+1}(\bz))=\left\{
\begin{array}{rcl}
(\tilde{ u}_h^{n+1}(\bz)-\frac{\delta t}{\alpha_k}B_{k-1}(\lambda_h^n) ,0)          & {\mbox{if} \quad 0      <  \tilde{ u}_h^{n+1}(\bz)}-\frac{\delta t}{\alpha_k}B_{k-1}(\lambda_h^n)\\
(0,B_{k-1}(\lambda_h^n)-\frac{\alpha_k}{\delta t}{\tilde{ u}_h^{n+1}(\bz)})       & \text{otherwise}\\
\end{array}, \right. \;\forall {\bm z} \in \Sigma_h.
\end{equation}

\begin{remark}
 Since $\lambda_h^n$ is an approximation to $\lambda_h$ which tends to zero as $h\rightarrow 0$, an alternative is to replace  $B_{k-1}(\lambda_h^n)$ by zero, i.e., leading to the scheme:
  
 \noindent{\bf Step 1}: solve $ \tilde u_h^{n+1}$ from
\begin{align}\label{por:lag:1b}
\frac{\alpha_k \tilde{u}_h^{n+1}-A_k(u_h^n)}{ \delta t}+\mathcal{L}^n_h \tilde u_h^{n+1}=0;
\end{align}
{\bf Step 2}: solve $ (u_h^{n+1},\lambda_h^{n+1})$ from
\begin{equation}\label{por:lag:2b}
\begin{split}
&\frac{\alpha_k( u_h^{n+1}(\bz)-\tilde{ u}_h^{n+1)}(\bz))}{\delta t} =\lambda_h^{n+1}(\bz),\quad\forall {\bm z} \in \Sigma_h,\\
&\lambda_h^{n+1}(\bz)\ge 0,\; u_h^{n+1}(\bz)\ge 0,\; \lambda_h^{n+1}(\bz) u_h^{n+1}(\bz)=0,\quad\forall {\bm z} \in \Sigma_h.
\end{split}
\end{equation}
Since the second step is once again equivalent to the cut-off approach \eqref{cutoff}, the above scheme can be interpreted as a $k$th-order cut-off  scheme. 
\end{remark}

\subsubsection{Positivity preserving schemes with mass conservation}
A drawback of the scheme \eqref{por:lag:1}-\eqref{por:lag:2} is that it does not preserve mass if the original equation does. For clarity, we consider first the first-order scheme \eqref{first:por:lag:1}-\eqref{first:por:lag:2}.

 Let $<\mathcal{L} \cdot,\cdot>$  (resp. $<\mathcal{L}_h^n \cdot,\cdot>$) denotes the continuous (resp. discrete) bilinear form  after proper integration by parts, e.g., if
$\mathcal{L}^n_h u_h:=-\nabla \cdot (f(u_h^n)\nabla u_h)$, then
$<\mathcal{L}^n_h u_h,v_h>:=[f(u_h^n)\nabla u_h,\nabla v_h]$.
Assuming $<\mathcal{L} u,1>=0$, we find from \eqref{strong} that $\partial_t (u,1)=0$, i.e., the mass is conserved. But assuming  $ <\mathcal{L}^n_h v_h, 1>=0$ for any $v_h\in X_h$, we derive from \eqref{first:por:lag:1}-\eqref{first:por:lag:2} that 
$$[u_h^{n+1},1]-[u_h^{n},1]=\delta t[\lambda_h^{n+1},1].$$
 Since $\lambda_h^{n+1}\ge 0$, we find that the mass is not conserved, in fact it is increasing with $n$.

We present below a simple modification which enables mass conservation. More precisely, we introduce another Lagrange multiplier $\xi^{n+1}_h$, which is independent of spatial variables, to enforce the mass conservation in the correction step. 

{\bf Step 1}:  solve $\tilde{ u}_h^{n+1}$ from
\begin{equation}\label{mass:first:por:lag:1}
\frac{\tilde{ u}_h^{n+1}(\bz)- u_h^n(\bz)}{\delta t} + \mathcal{L}^n_h\tilde{ u}_h^{n+1}(\bz)= 0
,\quad\forall {\bm z} \in \Sigma_h;
\end{equation}

{\bf Step 2}: solve $ (u_h^{n+1},\lambda_h^{n+1})$ from
\begin{subequations}\label{mass:first:por:lag:2}
\begin{align}
&\frac{ u_h^{n+1}(\bz)-\tilde{ u}_h^{n+1}(\bz)}{\delta t} =\lambda_h^{n+1}(\bz)+\xi^{n+1}_h,\quad\forall {\bm z} \in \Sigma_h,\label{mass:first:por:lag:2a}\\
&\lambda_h^{n+1}(\bz)\ge 0,\; u_h^{n+1}(\bz)\ge 0,\; \lambda_h^{n+1}(\bz) u_h^{n+1}(\bz)=0,\quad\forall {\bm z} \in \Sigma_h,\label{mass:first:por:lag:2b}\\
&[ u_h^{n+1},1]=[u_h^{n},1].\label{mass:first:por:lag:2c}
\end{align}
\end{subequations}
In order to solve \eqref{mass:first:por:lag:2}, we rewrite  \eqref{mass:first:por:lag:2a} in the following equivalent form 
\begin{equation}\label{mass:first:por:lag:2d}
\frac{ u_h^{n+1}(\bz)-(\tilde{ u}_h^{n+1}(\bz)+\delta t\xi^{n+1}_h)}{\delta t} =\lambda_h^{n+1}(\bz).
\end{equation}
Hence, assuming $\xi^{n+1}_h$ is known, \eqref{mass:first:por:lag:2d} and \eqref{mass:first:por:lag:2b}  can be solved pointwisely as follows: 
\begin{equation}\label{mass:solution}
 ( u_h^{n+1}(\bz),\lambda_h^{n+1}(\bz))=\left\{
\begin{array}{rcl}
(\tilde{ u}_h^{n+1}(\bz)+\delta t\xi^{n+1}_h,0)       &      & {\mbox{if} \quad 0      <  \tilde{ u}_h^{n+1}(\bz)+\delta t\xi^{n+1}_h}\\
(0,-\frac{\tilde{ u}_h^{n+1}(\bz)+\delta t\xi^{n+1}_h}{\delta t})    &      & \text{otherwise}\\
\end{array}, \right. \quad\forall {\bm z} \in \Sigma_h.
\end{equation}
It remains  to determine $\xi^{n+1}_h$. We find from \eqref{mass:first:por:lag:2c} and \eqref{mass:first:por:lag:2d} that
\begin{equation*}
 [ \tilde{ u}_h^{n+1}+\delta t\xi^{n+1}_h,1] =[u_h^n,1] -\delta t[\lambda_h^{n+1},1],
\end{equation*}
which, thanks to \eqref{mass:solution}, can be rewritten as
\begin{equation*}
\sum_{z\in \Sigma_h\,s.t.\,0<\tilde{ u}_h^{n+1}(\bz)+\delta t\xi^{n+1}_h} (\tilde{ u}_h^{n+1}(\bz)+\delta t\xi^{n+1}_h)\omega_z =[u_h^n,1].
\end{equation*}
Hence, $\xi^{n+1}_h$ is a solution to the nonlinear algebraic equation 
\begin{equation}
F(\xi)=\sum_{z\in \Sigma_h\,s.t.\,0<\tilde{ u}_h^{n+1}(\bz)+\delta t\xi} (\tilde{ u}_h^{n+1}(\bz)+\delta t\xi)\omega_z -[u_h^n,1]=0.
\end{equation}
Since $F'(\xi)$ may not exist and difficult to compute if it exists, instead of the Newton iteration, we can use the following secant method:
\begin{equation}\label{newton}
\xi_{k+1}=\xi_k -\frac{F(\xi_k)(\xi_k-\xi_{k-1})}{F(\xi_k)-F(\xi_{k-1})}.
\end{equation}
Since $\xi^{n+1}_h$ is an approximation to zero and it will be shown below that $\xi^{n+1}_h\le 0$, we can choose  $\xi_0=0$ and $\xi_1=-O(\delta t)$. In all our experiments,  \eqref{newton} converges in a few iterations so that the cost is negligible.

 Once $\xi^{n+1}_h$ is known, we can update $(u_h^{n+1},\lambda^{n+1}_h)$ with \eqref{mass:solution}.

Similarly, the  higher-order scheme \eqref{por:lag:1}-\eqref{por:lag:2} can be modified to preserve mass as follows: 

\noindent{\bf Step 1} (prediction): solve $\tilde u_h^{n+1}$ from
\begin{align}
\frac{\alpha_k \tilde{u}_h^{n+1}-A_k(u_h^n)}{ \delta t}+\mathcal{L}^n_h \tilde u_h^{n+1}=B_{k-1}(\lambda_h^n)+B_{k-1}(\xi_h^n);\label{mass:por:lag:1}
\end{align}
{\bf Step 2} (correction): solve $(u_h^{n+1},\lambda_h^{n+1})$ from
\begin{subequations}\label{mass:por:lag:2}
\begin{align}
&\frac{\alpha_k( u_h^{n+1}(\bz)-\tilde{ u}_h^{n+1}(\bz))}{\delta t} =\lambda_h^{n+1}(\bz)-B_{k-1}(\lambda_h^n)+\xi^{n+1}_h-B_{k-1}(\xi_h^n),\quad\forall {\bm z} \in \Sigma_h,\label{mass:por:lag:2a}\\
&\lambda_h^{n+1}(\bz)\ge 0,\; u_h^{n+1}(\bz)\ge 0,\; \lambda_h^{n+1}(\bz)u_h^{n+1}(\bz)=0,\;\quad\forall {\bm z} \in \Sigma_h,\label{mass:por:lag:2b}\\
&[ u_h^{n+1},1]=[u_h^{n},1].\label{mass:por:lag:2c}
\end{align}
\end{subequations}
In order to solve the above system, we  denote
$\eta_h^{n+1}:=\frac{\delta t}{\alpha_k}(\xi^{n+1}_h-B_{k-1}(\xi_h^n)-B_{k-1}(\lambda_h^n))$ and
 rewrite  \eqref{mass:por:lag:2a} as 
\begin{equation}\label{mass:por:lag:2d}
\frac{\alpha_k( u_h^{n+1}(\bz)-(\tilde{ u}_h^{n+1}(\bz)+\eta^{n+1}_h(\bz)))}{\delta t} =\lambda_h^{n+1}(\bz). 
\end{equation}
Assuming $\xi^{n+1}_h$ is known, we find from \eqref{mass:por:lag:2a} and \eqref{mass:por:lag:2d} that
\begin{equation}
 ( u_h^{n+1}(\bz),\lambda_h^{n+1}(\bz))=\left\{
\begin{array}{rcl}
(\tilde{ u}_h^{n+1}(\bz)+\eta_h^{n+1} ,0)       &      & \text{if} \quad 0      <  \tilde{ u}_h^{n+1}(\bz)+\eta_h^{n+1},\\
(0,-\frac{\alpha_k}{\delta t}(\tilde{u}_h^{n+1}(\bz)+\eta_h^{n+1}(\bz)))    &      & \text{otherwise}.\\
\end{array} \right.
\end{equation}

Finally, we can determine $\xi^{n+1}_h$ by solving the nonlinear algebraic equation
\begin{equation}\label{xi}
F(\xi^{n+1}_h)=\sum_{z\in \Sigma_h\,s.t.\, 0      <\tilde{ u}_h^{n+1}(\bz)+\eta_h^{n+1(\bz)}} (\tilde{ u}_h^{n+1}(\bz)+\eta_h^{n+1}(\bz))\omega_z -[u_h^n,1]=0.
\end{equation}

\begin{remark}
 Replacing $B_{k-1}(\lambda_h^n)$ in \eqref{mass:por:lag:1}-\eqref{mass:por:lag:2} by zero, we obtain a mass conserved $k$th-order cut-off scheme.
\end{remark}

\section{Stability results}
We prove in this section that the first- and second-order positivity preserving schemes with or without mass conservation are dissipative and unconditionally stable if $< \mathcal{L}^n_h { v}_h,{ v}_h>\;\ge 0\;\forall  v_h\in X_h$ for all $n$. 

\subsection{First-order schemes}
We consider first the  scheme \eqref{first:por:lag:1}-\eqref{first:por:lag:2}.
\begin{theorem}\label{firststab}
For the  scheme \eqref{first:por:lag:1}-\eqref{first:por:lag:2}, we have 
\begin{equation}\label{stab:rsults}
\begin{split}
&\| u_h^{m}\|^2
+\sum\limits_{n=0}^{m-1}(\|\tilde{ u}_h^{n+1}- u_h^{n}\|^2+\delta t^2\|\lambda_h^{n+1}\|^2)+2\delta t  \sum\limits_{n=0}^{m-1} < \mathcal{L}^n_h\tilde{ u}_h^{n+1},\tilde{ u}_h^{n+1}>
=\|u_h^0\|^2,\quad\forall m\ge 1.
\end{split}
\end{equation}
In particular, if for all $n$, $< \mathcal{L}^n_h { v}_h,{ v}_h>\;\ge 0\;\forall  v_h\in X_h$, then the scheme  \eqref{first:por:lag:1}-\eqref{first:por:lag:2} with $k=1$ is dissipative and unconditionally stable.
\end{theorem}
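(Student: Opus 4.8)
The plan is to derive a one-step discrete energy identity by testing each of the two substeps against a carefully chosen element, and then adding the results so that the intermediate quantity $\|\tilde u_h^{n+1}\|^2$ cancels; summing telescopically over $n$ then gives \eqref{stab:rsults}. Throughout I would use the polarization identity $2[a-b,a]=\|a\|^2-\|b\|^2+\|a-b\|^2$, valid for the discrete inner product $[\cdot,\cdot]$ since its induced norm is $\|\cdot\|=[\cdot,\cdot]^{1/2}$.

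First I would take the discrete inner product of the prediction step \eqref{first:por:lag:1} with $\tilde u_h^{n+1}$, multiply by $\delta t$, and identify $[\mathcal{L}^n_h\tilde u_h^{n+1},\tilde u_h^{n+1}]$ with the bilinear form $< \mathcal{L}^n_h\tilde u_h^{n+1},\tilde u_h^{n+1}>$ obtained after proper summation/integration by parts. Applying polarization with $a=\tilde u_h^{n+1}$, $b=u_h^n$ yields
\[
\tfrac12\big(\|\tilde u_h^{n+1}\|^2-\|u_h^{n}\|^2+\|\tilde u_h^{n+1}-u_h^{n}\|^2\big)+\delta t\,< \mathcal{L}^n_h\tilde u_h^{n+1},\tilde u_h^{n+1}>\;=0.
\]
Next I would test the correction step \eqref{first:por:lag:2} against $u_h^{n+1}$ (and not against $\tilde u_h^{n+1}$). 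The crucial point is that the pointwise complementarity $\lambda_h^{n+1}(\bz)\,u_h^{n+1}(\bz)=0$ makes the cross term vanish, $[\lambda_h^{n+1},u_h^{n+1}]=\sum_{\bz}\omega_{\bz}\lambda_h^{n+1}(\bz)u_h^{n+1}(\bz)=0$. Polarizing with $a=u_h^{n+1}$, $b=\tilde u_h^{n+1}$ and using $u_h^{n+1}-\tilde u_h^{n+1}=\delta t\,\lambda_h^{n+1}$ to rewrite $\|u_h^{n+1}-\tilde u_h^{n+1}\|^2=\delta t^2\|\lambda_h^{n+1}\|^2$, I obtain
\[
\tfrac12\big(\|u_h^{n+1}\|^2-\|\tilde u_h^{n+1}\|^2+\delta t^2\|\lambda_h^{n+1}\|^2\big)=0.
\]

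Adding the two identities cancels $\|\tilde u_h^{n+1}\|^2$, and after multiplying by $2$ I get the one-step identity
\[
\|u_h^{n+1}\|^2-\|u_h^{n}\|^2+\|\tilde u_h^{n+1}-u_h^{n}\|^2+\delta t^2\|\lambda_h^{n+1}\|^2+2\delta t\,< \mathcal{L}^n_h\tilde u_h^{n+1},\tilde u_h^{n+1}>\;=0,
\]
whose telescoping sum over $n=0,\dots,m-1$ is exactly \eqref{stab:rsults}. Finally, under the hypothesis $< \mathcal{L}^n_h v_h,v_h>\;\ge 0$ for all $v_h\in X_h$ and all $n$, every term on the left of \eqref{stab:rsults} other than $\|u_h^m\|^2$ is nonnegative, so $\|u_h^m\|^2\le\|u_h^0\|^2$; this gives unconditional stability and shows $\|u_h^m\|^2$ is nonincreasing in $m$, i.e.\ dissipativity. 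Since each ingredient is an elementary application of polarization, I do not anticipate a genuine obstacle; the only place that requires care is the choice of test functions — pairing the correction equation with $u_h^{n+1}$ rather than $\tilde u_h^{n+1}$ — which is precisely what lets the KKT complementarity eliminate the Lagrange-multiplier cross term and keep the identity exact.
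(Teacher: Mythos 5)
Your proposal is correct and follows essentially the same argument as the paper: test the prediction step with $\tilde u_h^{n+1}$, use the complementarity condition $[\lambda_h^{n+1},u_h^{n+1}]=0$ to turn the correction step into the identity $\|u_h^{n+1}\|^2+\delta t^2\|\lambda_h^{n+1}\|^2=\|\tilde u_h^{n+1}\|^2$, add, and telescope. The only cosmetic difference is that the paper obtains the second identity by squaring both sides of $u_h^{n+1}-\delta t\lambda_h^{n+1}=\tilde u_h^{n+1}$ rather than by testing against $u_h^{n+1}$ and polarizing, which is the same computation.
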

\begin{proof}
Taking the discrete inner product of  \eqref{first:por:lag:1} with $2\delta t\tilde{ u}_h^{n+1}$, we obtain
\begin{equation}\label{stab:1a}
\begin{split}
\| \tilde u_h^{n+1}\|^2-\| u_h^n\|^2+\|\tilde u_h^{n+1}- u_h^n\|^2
+2\delta t< \mathcal{L}^n_h\tilde{ u}_h^{n+1},\tilde{ u}_h^{n+1}>
=0.
\end{split}
\end{equation}
We rewrite  \eqref{first:por:lag:2}  as
\begin{equation}\label{inter1}
 u_h^{n+1}(\bz)-\delta t\lambda_h^{n+1}(\bz)=\tilde{ u}_h^{n+1}(\bz).
\end{equation}
Taking the  the discrete inner product of each side of the above equation with itself,  thanks to the last KKT condition in \eqref{first:por:lag:2}, we derive
\begin{equation*}
\| u_h^{n+1}\|^2+\delta t^2\|\lambda_h^{n+1}\|^2 = \|\tilde{ u}_h^{n+1}\|^2.
\end{equation*}
Summing up the above with \eqref{stab:1a}, we obtain
\begin{equation*}
\| u_h^{n+1}\|^2-\| u_h^n\|^2+\delta t^2\|\lambda_h^{n+1}\|^2+\|\tilde{ u}_h^{n+1}- u_h^{n}\|^2
+2\delta t< \mathcal{L}^n_h\tilde{ u}_h^{n+1},\tilde{ u}_h^{n+1}>=0.
\end{equation*}
Summing up the above for $n$ from 0 to $m-1$, we arrive at  the desired result.
\end{proof}

Next, we consider the mass conserved scheme \eqref{mass:first:por:lag:1}-\eqref{mass:first:por:lag:2}.
\begin{theorem}
For the  scheme \eqref{mass:first:por:lag:1}-\eqref{mass:first:por:lag:2}, if $ <\mathcal{L}^n_h v_h, 1>=0$ for any $v_h\in X_h$, we have 
\begin{equation}\label{mass1:stab:results}
\| u_h^{m}\|^2
+\sum\limits_{n=0}^{m-1}(\|\tilde{ u}_h^{n+1}- u_h^{n}\|^2+\delta t^2\|\lambda_h^{n+1}+\xi_h^{n+1}\|^2)+2\delta t  \sum\limits_{n=0}^{m-1} < \mathcal{L}^n_h\tilde{ u}_h^{n+1},\tilde{ u}_h^{n+1}>
\le \|u_h^0\|^2,\quad\forall m\ge 1.
\end{equation}
In particular, if for all $n$, $< \mathcal{L}^n_h{ v}_h,{ v}_h>\;\ge 0\;\forall  v_h\in X_h$, then the scheme  \eqref{mass:first:por:lag:1}-\eqref{mass:first:por:lag:2}  is dissipative and unconditionally stable.
\end{theorem}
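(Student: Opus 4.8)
The plan is to mirror the proof of Theorem~\ref{firststab}, the only new ingredient being the treatment of the global, space-independent multiplier $\xi_h^{n+1}$ in the correction step. First I would take the discrete inner product of the prediction step \eqref{mass:first:por:lag:1} with $2\delta t\,\tilde u_h^{n+1}$. Since this step is identical to \eqref{first:por:lag:1}, this reproduces verbatim the identity
\[
\|\tilde u_h^{n+1}\|^2-\|u_h^n\|^2+\|\tilde u_h^{n+1}-u_h^n\|^2+2\delta t<\mathcal{L}^n_h\tilde u_h^{n+1},\tilde u_h^{n+1}>=0.
\]

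Next I would use the equivalent form \eqref{mass:first:por:lag:2d}, namely $u_h^{n+1}(\bz)-\delta t(\lambda_h^{n+1}(\bz)+\xi_h^{n+1})=\tilde u_h^{n+1}(\bz)$, and take the discrete inner product of each side with itself. Expanding the left-hand square yields
\[
\|u_h^{n+1}\|^2-2\delta t\,[u_h^{n+1},\lambda_h^{n+1}+\xi_h^{n+1}]+\delta t^2\|\lambda_h^{n+1}+\xi_h^{n+1}\|^2=\|\tilde u_h^{n+1}\|^2,
\]
so the heart of the matter is the cross term $[u_h^{n+1},\lambda_h^{n+1}+\xi_h^{n+1}]$. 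I would split it as $[u_h^{n+1},\lambda_h^{n+1}]+\xi_h^{n+1}[u_h^{n+1},1]$: the first piece vanishes exactly by the complementarity condition $\lambda_h^{n+1}(\bz)u_h^{n+1}(\bz)=0$ in \eqref{mass:first:por:lag:2b}, while the second piece is where the mass constraint \eqref{mass:first:por:lag:2c} enters, giving $\xi_h^{n+1}[u_h^{n+1},1]=\xi_h^{n+1}[u_h^n,1]$.

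The crux, and the reason the conclusion is an inequality rather than an equality, is to show this remaining term has a favorable sign. I would note that $[u_h^n,1]=\sum_{\bz\in\Sigma_h}\omega_{\bz}u_h^n(\bz)\ge 0$, since the weights are positive and $u_h^n\ge0$ pointwise (by the KKT conditions for $n\ge1$ and by positivity of the initial data for $n=0$), while $\xi_h^{n+1}\le0$. The sign of $\xi_h^{n+1}$ follows from the structure of the defining equation $F(\xi)=0$: the function $F$ is nondecreasing in $\xi$, and at $\xi=0$ one has $F(0)=\sum_{\bz\,:\,\tilde u_h^{n+1}(\bz)>0}\omega_{\bz}\tilde u_h^{n+1}(\bz)-[u_h^n,1]\ge[\tilde u_h^{n+1},1]-[u_h^n,1]=0$, where the last equality uses the mass conservation of the prediction step (the hypothesis $<\mathcal{L}^n_h v_h,1>=0$). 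Hence the root satisfies $\xi_h^{n+1}\le0$, so that $2\delta t\,\xi_h^{n+1}[u_h^n,1]\le0$ and I obtain $\|u_h^{n+1}\|^2+\delta t^2\|\lambda_h^{n+1}+\xi_h^{n+1}\|^2\le\|\tilde u_h^{n+1}\|^2$.

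Finally I would substitute the prediction identity for $\|\tilde u_h^{n+1}\|^2$, rearrange so that the nonnegative dissipation terms sit on the left against $\|u_h^n\|^2$ on the right, and sum over $n$ from $0$ to $m-1$ to telescope into \eqref{mass1:stab:results}. The main obstacle is genuinely the sign analysis of $\xi_h^{n+1}$, together with the bookkeeping that isolates the complementarity term; once $\xi_h^{n+1}\le0$ is in hand, the remainder is the same telescoping argument as in Theorem~\ref{firststab}.
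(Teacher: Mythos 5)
Your proof is correct and follows the same energy argument as the paper: test the prediction step with $2\delta t\,\tilde u_h^{n+1}$, square the rewritten correction step, kill $[u_h^{n+1},\lambda_h^{n+1}]$ by complementarity, and reduce everything to the sign of the leftover term involving $\xi_h^{n+1}$. The one place where you genuinely diverge is the proof that $\xi_h^{n+1}\le 0$. The paper gets this by adding \eqref{mass:first:por:lag:1} and \eqref{mass:first:por:lag:2a}, testing the combined equation with $1$, and using $<\mathcal{L}^n_h\tilde u_h^{n+1},1>=0$ together with \eqref{mass:first:por:lag:2c} to conclude $[\lambda_h^{n+1},1]+[\xi_h^{n+1},1]=0$, hence $\xi_h^{n+1}=-[\lambda_h^{n+1},1]/|\Omega|\le 0$ directly from $\lambda_h^{n+1}\ge 0$. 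You instead argue through the algebraic equation $F(\xi)=0$ that determines $\xi_h^{n+1}$: $F$ is nondecreasing and $F(0)=\sum_{\bz:\,\tilde u_h^{n+1}(\bz)>0}\omega_{\bz}\tilde u_h^{n+1}(\bz)-[u_h^n,1]\ge[\tilde u_h^{n+1},1]-[u_h^n,1]=0$, using that the prediction step itself conserves mass under the hypothesis $<\mathcal{L}^n_h v_h,1>=0$. Both arguments are valid; the paper's is shorter and identifies $\xi_h^{n+1}$ explicitly in terms of $\lambda_h^{n+1}$, while yours has the side benefit of justifying that a nonpositive root of $F$ exists (which is what the secant iteration is actually computing) at the cost of a mild degeneracy caveat when $F$ is flat at the root. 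Your additional conversion $\xi_h^{n+1}[u_h^{n+1},1]=\xi_h^{n+1}[u_h^n,1]$ is harmless but unnecessary, since $[u_h^{n+1},1]\ge 0$ already follows from $u_h^{n+1}\ge 0$ and the positivity of the weights, which is exactly how the paper closes the estimate.
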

\begin{proof}
The proof follows the same procedure  as that of Theorem \ref{firststab}. Indeed, we can replace \eqref{inter1} by 
\begin{equation}\label{mass1:stab1}
 u_h^{n+1}(\bz)-\delta t(\lambda_h^{n+1}(\bz)+\xi_h^{n+1})=\tilde{ u}_h^{n+1}(\bz),
\end{equation}
Taking the inner product of \eqref{mass1:stab1} with itself on both sides, we obtain
\begin{equation}\label{mass1:stab2}
 \|u_h^{n+1}(\bz)\|^2+\delta t^2\|\lambda_h^{n+1}(\bz)+\xi_h^{n+1}\|^2-2\delta t[u_h^{n+1}(\bz),\xi_h^{n+1} ]=\|\tilde{ u}_h^{n+1}(\bz)\|^2.
\end{equation}
Summing up \eqref{mass:first:por:lag:1} and \eqref{mass:first:por:lag:2a}, we obtain
\begin{equation}\label{mass:1:sum}
\frac{ u_h^{n+1}(\bz)- u_h^n(\bz)}{\delta t} + \mathcal{L}^n_h\tilde{ u}_h^{n+1}(\bz)= \lambda_h^{n+1}+\xi_h^{n+1}
,\quad\forall {\bm z} \in \Sigma_h,
\end{equation}
Taking the discrete inner product  of \eqref{mass:1:sum} with $1$ on both sides, using the fact that $ <\mathcal{L}^n_h \tilde u_h^{n+1}, 1>=0$, we obtain
\begin{equation}
[\lambda_h^{n+1},1]+[\xi_h^{n+1},1]=0,
\end{equation}
which implies that $\xi_h^{n+1}=-\frac{[\lambda_h^{n+1},1]}{|\Omega|}\le 0$ since $\lambda_h^{n+1}\ge 0$.
Therefore, 
\begin{equation*}
-2\delta t[u_h^{n+1}(\bz),\xi_h^{n+1} ] \ge 0.
\end{equation*}
Finally, summing up  \eqref{mass1:stab2} with \eqref{stab:1a} and dropping some unnecessary  terms, we   arrive at the desired result.
\end{proof}

\subsection{Second-order schemes}
We first consider the scheme \eqref{por:lag:1}-\eqref{por:lag:2} with $k=2$.
\begin{theorem}\label{secondstab}
For the scheme \eqref{por:lag:1}-\eqref{por:lag:2} with $k=2$, we assume that the first step is computed with the first-order scheme  \eqref{first:por:lag:1}-\eqref{first:por:lag:2}. Then,  we have
\begin{equation*}\label{stab:gron:01}
4\| u_h^{m}\|^2+\frac 43\delta t^2\|\lambda_h^{m}\|^2+
4\delta t \sum\limits_{n=0}^{m-1} < \mathcal{L}_h^n\tilde{ u}_h^{n+1},\tilde{ u}_h^{n+1}>
\le \|2 u_h^1- u_h^{0}\|^2+4\| u_h^0\|^2,\quad\forall m\ge 1.
\end{equation*}
In particular, if for all $n$, $< \mathcal{L}^n_h{ v}_h,{ v}_h>\;\ge 0\;\forall  v_h\in X_h$, then the scheme \eqref{por:lag:1}-\eqref{por:lag:2} with $k=2$ is dissipative and unconditionally stable.
\end{theorem}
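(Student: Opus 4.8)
The plan is to prove a one–step energy–dissipation inequality of $G$-stability type and then sum it, with the very first step handled separately by Theorem~\ref{firststab}. For $k=2$ the prediction step \eqref{por:lag:1} reads
\[
\tfrac32\tilde{u}_h^{n+1}-2u_h^n+\tfrac12 u_h^{n-1}+\delta t\,\mathcal{L}_h^n\tilde{u}_h^{n+1}=\delta t\,\lambda_h^n,
\]
while the correction step \eqref{por:lag:2} can be recast as the pointwise identity $\tfrac32 u_h^{n+1}-\delta t\lambda_h^{n+1}=\tfrac32\tilde{u}_h^{n+1}-\delta t\lambda_h^n$, i.e. $u_h^{n+1}=\tilde{u}_h^{n+1}+\tfrac{2\delta t}{3}(\lambda_h^{n+1}-\lambda_h^n)$. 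As in the first-order proof, the operator dissipation $\langle\mathcal{L}_h^n\tilde{u}_h^{n+1},\tilde{u}_h^{n+1}\rangle$ can only be generated by testing the prediction step against the \emph{predictor} $\tilde{u}_h^{n+1}$, whereas the complementarity $\lambda_h^{n+1}u_h^{n+1}=0$ lives at the \emph{corrected} level $u_h^{n+1}$; reconciling these two is the crux of the whole argument.

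First I would test the prediction step with $2\tilde{u}_h^{n+1}$ in the discrete inner product and invoke the purely algebraic BDF2 identity $[3a-4b+c,a]=\tfrac12(\|a\|^2-\|b\|^2+\|2a-b\|^2-\|2b-c\|^2+\|a-2b+c\|^2)$ with $(a,b,c)=(\tilde{u}_h^{n+1},u_h^n,u_h^{n-1})$. This produces an exact identity whose new-time part is the BDF2 $G$-functional evaluated at the predictor, $\tfrac12(\|\tilde{u}_h^{n+1}\|^2+\|2\tilde{u}_h^{n+1}-u_h^n\|^2)$, together with a nonnegative numerical dissipation $\tfrac12\|\tilde{u}_h^{n+1}-2u_h^n+u_h^{n-1}\|^2$, the sought term $2\delta t\langle\mathcal{L}_h^n\tilde{u}_h^{n+1},\tilde{u}_h^{n+1}\rangle$, and a right-hand side $2\delta t[\lambda_h^n,\tilde{u}_h^{n+1}]$ coming from the extrapolated multiplier $B_1(\lambda_h^n)=\lambda_h^n$.

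The heart of the proof is to convert this predictor $G$-functional into the corrected one. Substituting $\tilde{u}_h^{n+1}=u_h^{n+1}-\tfrac{2\delta t}{3}(\lambda_h^{n+1}-\lambda_h^n)$ and using the complementarity relations $[u_h^{n+1},\lambda_h^{n+1}]=0$ and $[u_h^n,\lambda_h^n]=0$, both the $G$-functional and the right-hand side $2\delta t[\lambda_h^n,\tilde{u}_h^{n+1}]$ expand into their $u_h^{n+1}$-counterparts plus multiplier cross terms of the form $\delta t[u_h^{n+1},\lambda_h^n]$, $\delta t[u_h^n,\lambda_h^{n+1}]$, together with quadratics in $\lambda_h^{n+1},\lambda_h^n$. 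After collecting, the genuinely quadratic multiplier contribution reorganizes, through an identity of the shape
\[
\tfrac{10}{9}\|\lambda_h^{n+1}\|^2-\tfrac{8}{9}[\lambda_h^{n+1},\lambda_h^n]-\tfrac{2}{9}\|\lambda_h^n\|^2=\tfrac{2}{3}\big(\|\lambda_h^{n+1}\|^2-\|\lambda_h^n\|^2\big)+\tfrac{4}{9}\|\lambda_h^{n+1}-\lambda_h^n\|^2,
\]
into a telescoping piece plus a nonnegative remainder, which is the source of the $\delta t^2\|\lambda_h^m\|^2$ term in the final estimate. This is exactly where the hypothesis $u_h,\lambda_h\ge 0$ becomes essential and goes beyond the first-order analysis: the surviving linear-in-$\lambda$ cross terms $[u_h^{n+1},\lambda_h^n]$ and $[u_h^n,\lambda_h^{n+1}]$ are nonnegative only because \emph{both} the solution and the multiplier are pointwise nonnegative (and $\omega_{\bm{z}}>0$), whereas the first-order proof needed only the complementarity.

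Discarding these nonnegative terms yields a one-step inequality $\mathcal{E}^{n+1}+2\delta t\langle\mathcal{L}_h^n\tilde{u}_h^{n+1},\tilde{u}_h^{n+1}\rangle\le\mathcal{E}^n$ for an energy $\mathcal{E}^n$ of the form $\tfrac12\|u_h^n\|^2+\tfrac12\|2u_h^n-u_h^{n-1}\|^2+\tfrac23\delta t^2\|\lambda_h^n\|^2$. Summing for $n$ from $1$ to $m-1$, discarding the nonnegative $\|2u_h^m-u_h^{m-1}\|^2$, and using Theorem~\ref{firststab} on the first (first-order) step to absorb $\mathcal{E}^1$ and the $n=0$ dissipation into $\|u_h^0\|^2$ and $\|2u_h^1-u_h^0\|^2$ (the first-order step is also what furnishes $\lambda_h^1$ for the extrapolation), then rescaling, delivers the stated estimate; dissipativity and unconditional stability are immediate once $\langle\mathcal{L}_h^n v_h,v_h\rangle\ge 0$. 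I expect the main obstacle to be precisely this predictor-to-corrector conversion in the presence of the extrapolated multiplier: the delicate bookkeeping is to organize the multiplier cross terms so that the quadratics telescope while the linear ones keep a favourable sign through positivity, which is what the entire estimate hinges on.
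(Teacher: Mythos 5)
Your proposal is correct, and I verified that the one-step computation closes exactly as you describe: expanding the predictor-level $G$-functional via $\tilde u_h^{n+1}=u_h^{n+1}-\tfrac{2\delta t}{3}(\lambda_h^{n+1}-\lambda_h^n)$ produces the linear cross terms $\tfrac{4\delta t}{3}\bigl([u_h^{n+1},\lambda_h^n]+[u_h^n,\lambda_h^{n+1}]\bigr)\ge 0$ and the quadratic combination $\tfrac{10}{9}\|\lambda_h^{n+1}\|^2-\tfrac{8}{9}[\lambda_h^{n+1},\lambda_h^n]-\tfrac{2}{9}\|\lambda_h^n\|^2$, and your telescoping identity for the latter is exact. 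However, your route is organized differently from the paper's. The paper tests with $4\delta t\,\tilde u_h^{n+1}$ but applies the BDF2 identity at the \emph{corrected} level, writing $[3\tilde u_h^{n+1}-4u_h^n+u_h^{n-1},2\tilde u_h^{n+1}]$ as the corrected $G$-telescope plus cross terms: one is controlled by Cauchy--Schwarz against the second-difference dissipation, one reduces to the same positivity argument you use, and one is a polarization identity; the $\delta t^2\|\lambda_h\|^2$ telescoping is then generated separately by squaring the correction relation $3u_h^{n+1}-2\delta t\lambda_h^{n+1}=3\tilde u_h^{n+1}-2\delta t\lambda_h^n$. Your version buys a cleaner argument --- no Cauchy--Schwarz is needed because the dissipation $\|\tilde u_h^{n+1}-2u_h^n+u_h^{n-1}\|^2$ sits at the predictor level and is simply dropped --- at the price of slightly weaker constants: your energy $\mathcal{E}^n=\tfrac12\|u_h^n\|^2+\tfrac12\|2u_h^n-u_h^{n-1}\|^2+\tfrac23\delta t^2\|\lambda_h^n\|^2$ yields, after doubling, a coefficient $1$ rather than $4$ on $\|u_h^m\|^2$ (the paper's factor $4$ comes from the extra $3(\|\tilde u_h^{n+1}\|^2-\|u_h^{n+1}\|^2)$ bookkeeping in its decomposition), so you obtain an estimate of the same form and the identical stability conclusion, but not the displayed inequality verbatim; no amount of rescaling recovers the $4\|u_h^m\|^2$ while keeping $4\delta t$ on the operator sum. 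Both proofs hinge on the same two essential facts you correctly identified: pointwise nonnegativity of both $u_h$ and $\lambda_h$ (with $\omega_{\bm z}>0$) for the linear cross terms, and telescoping of the multiplier quadratics.
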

\begin{proof}
Taking inner product of equation \eqref{por:lag:1} (with $k=2$)  with $4\delta t\tilde{ u}_h^{n+1}$, we obtain
\begin{equation}\label{stab:1}
\begin{split}
[3\tilde{ u}_h^{n+1}-4 u_h^n+ u_h^{n-1},2\tilde{ u}_h^{n+1}]
+4\delta t< \mathcal{L}_h^n\tilde{ u}_h^{n+1},\tilde{ u}_h^{n+1}>
=4\delta t[\lambda_h^n,\tilde{ u}_h^{n+1}].
\end{split}
\end{equation}
The term  on the left can be written as 
\begin{equation}\label{stab:2}
\begin{split}
&[3\tilde{ u}_h^{n+1}-4 u_h^n+ u_h^{n-1},2\tilde{ u}_h^{n+1}]=
2[3 u_h^{n+1}-4 u_h^n+ u_h^{n-1}, u_h^{n+1}] 
\\&+ 2[3 u^{n+1}_h-4 u_h^n+ u_h^{n-1},\tilde{ u}_h^{n+1}- u_h^{n+1}] + 6[\tilde{ u}_h^{n+1}- u_h^{n+1},\tilde{ u}_h^{n+1}].
\end{split}
\end{equation}
For the first term on the righthand side of  \eqref{stab:2}, we have 
\begin{equation}
\begin{split}
2[3 u_h^{n+1}-4 u_h^n+ u_h^{n-1}, u_h^{n+1}]=&
\| u_h^{n+1}\|^2-\| u_h^n\|^2+\|2 u_h^{n+1}- u_h^n\|^2\\
&-\|2 u_h^n- u_h^{n-1}\|^2+\| u_h^{n+1}-2 u_h^n+ u_h^{n-1}\|^2.
\end{split}
\end{equation}
For the last term in \eqref{stab:2}, we have
\begin{equation}\label{stab:3}
 6[\tilde{ u}_h^{n+1}- u_h^{n+1},\tilde{ u}_h^{n+1}]=3(\|\tilde{ u}_h^{n+1}\|^2-\| u_h^{n+1}\|^2+\|\tilde{ u}_h^{n+1}- u_h^{n+1}\|^2).
\end{equation}
And for the second term on the righthand side of  \eqref{stab:2}. Similarly, we have
\begin{equation}\label{inter3} 
\begin{split}
2[3 u_h^{n+1}-4 u_h^n+ u_h^{n-1},\tilde{ u}_h^{n+1}- u_h^{n+1}]=&
2[ u_h^{n+1}-2 u_h^n+ u_h^{n-1},\tilde{ u}_h^{n+1}- u_h^{n+1}]\\&+
4[ u_h^{n+1}- u_h^n,\tilde{ u}_h^{n+1}- u_h^{n+1}].
\end{split}
\end{equation}
By Cauchy-Schwartz inequality,   the first term on the righthand side of \eqref{inter3} can be bounded by
\begin{equation}\label{stab:cau}
2[ u_h^{n+1}-2 u_h^n+ u_h^{n-1},\tilde{ u}_h^{n+1}- u_h^{n+1}]\leq
\| u_h^{n+1}-2 u_h^n+ u_h^{n-1}\|^2 + \|\tilde{ u}_h^{n+1}- u_h^{n+1}\|^2.
\end{equation}
Thanks to the KKT conditions in \eqref{por:lag:2}, we have   $[u_h^{n+1},\lambda_h^{n+1}]=[ u_h^{n},\lambda_h^{n}]=0$  for all $n$, so for the second term on the righthand side  of \eqref{inter3}, we have
\begin{equation}\label{larger}
\begin{split}
4[ u_h^{n+1}- u_h^n,\tilde{ u}_h^{n+1}- u_h^{n+1}]&=-\frac{8\delta t}{3}[ u_h^{n+1}- u_h^n,\lambda_h^{n+1}-\lambda_h^n]\\&=\frac{8\delta t}{3}\{[ u_h^{n+1},\lambda_h^n]+[u_h^n,\lambda_h^{n+1}]\}\ge 0.
\end{split}
\end{equation}

Next, we rewrite \eqref{por:lag:2} with $k=2$ as
\begin{equation}\label{eq:proj}
3 u_h^{n+1}-2\delta t\lambda_h^{n+1}=3\tilde{ u}_h^{n+1}-2\delta t\lambda_h^n.
\end{equation}
Taking the discrete inner product of each side of the above equation with itself,  since $[\lambda_h^{n+1}, u_h^{n+1}]=0$, we derive
\begin{equation}\label{inter2}
3\| u_h^{n+1}\|^2+\frac 43\delta t^2\|\lambda_h^{n+1}\|^2 = 3\|\tilde{ u}_h^{n+1}\|^2-4\delta t[\tilde{ u}_h^{n+1},\lambda_h^n] + \frac 43\delta t^2\|\lambda_h^n\|^2.
\end{equation}
Now, summing up \eqref{stab:1} with \eqref{inter2}, and using \eqref{stab:2} to \eqref{larger},  after dropping some unnecessary positive terms, we obtain that for $n\ge 1$, 
\begin{equation}\label{stab:final}
\begin{split}
&4(\| u_h^{n+1}\|^2-\| u_h^n\|^2)+\|2 u_h^{n+1}- u_h^n\|^2-\|2 u_h^n- u_h^{n-1}\|^2\\&
+2\|\tilde{ u}_h^{n+1}- u_h^{n+1}\|^2
+\frac 43\delta t^2(\|\lambda_h^{n+1}\|^2-\|\lambda_h^n\|^2)
+4\delta t< \mathcal{L}_h^n\tilde{ u}_h^{n+1},\tilde{ u}_h^{n+1}>\le 0.
\end{split}
\end{equation}
On the other hand, since the first step is computed by using the first-order scheme, we take $n=1$ in \eqref{stab:rsults} to obtain
\begin{equation}\label{firststep}
\begin{split}
&\| u_h^{1}\|^2
+\|\tilde{ u}_h^{1}- u_h^{0}\|^2+2\delta t  < \mathcal{L}_h^0\tilde{ u}_h^{1},\tilde{ u}_h^{1}>
+\delta t^2\|\lambda_h^{1}\|^2=
\|u_h^0\|^2.
\end{split}
\end{equation}
Finally, summing up \eqref{stab:final} from $n=1$ to $n=m-1$ with \eqref{firststep} multiplied by 4, we obtain, after dropping some unnecessary terms, 
\begin{equation}\label{stab:gron:1}
\begin{split}
&4\| u_h^{m}\|^2+\frac 43\delta t^2\|\lambda_h^{m}\|^2+
4\delta t \sum\limits_{n=0}^{m-1} <\mathcal{L}_h^n\tilde{ u}_h^{n+1},\tilde{ u}_h^{n+1}>
\le \|2 u_h^1- u_h^{0}\|^2+4\| u_h^0\|^2,
\end{split}
\end{equation}
which implies the desired result.
\end{proof}

Next, we consider the mass conserved scheme \eqref{mass:por:lag:1}-\eqref{mass:por:lag:2} with $k=2$.
\begin{theorem}\label{mass2:second:stab} 
For the scheme \eqref{mass:por:lag:1}-\eqref{mass:por:lag:2} with $k=2$, we assume that the first step is computed with the first-order scheme \eqref{mass:first:por:lag:1}-\eqref{mass:first:por:lag:2}. Then,  if $ <\mathcal{L}^n_h v_h, 1>=0$ for any $v_h\in X_h$, we have
\begin{equation*}\label{mass2:stab:gron:01}
4\| u_h^{m}\|^2+\frac 43\delta t^2\|\lambda_h^{m}+\xi_h^m\|^2+
4\delta t \sum\limits_{n=0}^{m-1} < \mathcal{L}_h^n\tilde{ u}_h^{n+1},\tilde{ u}_h^{n+1}>
\le \|2 u_h^1- u_h^{0}\|^2+4\| u_h^0\|^2,\quad\forall m\ge 1.
\end{equation*}
In particular, if for all $n$, $< \mathcal{L}^n_h{ v}_h,{ v}_h>\;\ge 0\;\forall  v_h\in X_h$, then the scheme \eqref{mass:por:lag:1}-\eqref{mass:por:lag:2} with $k=2$ is dissipative and unconditionally stable.
\end{theorem}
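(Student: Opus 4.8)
The plan is to re-run the proof of Theorem~\ref{secondstab} almost verbatim, with the combined multiplier $\mu_h^{n+1}:=\lambda_h^{n+1}+\xi_h^{n+1}$ taking over the role of $\lambda_h^{n+1}$. Indeed, for $k=2$ the correction equation \eqref{mass:por:lag:2a} can be rewritten, exactly as the analogue of \eqref{eq:proj}, as
\begin{equation*}
3 u_h^{n+1}-2\delta t\,\mu_h^{n+1}=3\tilde u_h^{n+1}-2\delta t\,\mu_h^{n},
\end{equation*}
while the prediction step \eqref{mass:por:lag:1}, tested with $4\delta t\,\tilde u_h^{n+1}$, gives the identity \eqref{stab:1} with $\lambda_h^n$ replaced by $\mu_h^n$. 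Taking the discrete inner product of each side of the displayed equation with itself then produces the analogue of \eqref{inter2}. The one structural difference is that now $[u_h^{n+1},\mu_h^{n+1}]=[u_h^{n+1},\lambda_h^{n+1}]+\xi_h^{n+1}[u_h^{n+1},1]=\xi_h^{n+1}[u_h^{n+1},1]$, since the KKT condition still gives $[u_h^{n+1},\lambda_h^{n+1}]=0$ but the scalar $\xi_h^{n+1}$ survives; this leaves an extra term $4\delta t\,\xi_h^{n+1}[u_h^{n+1},1]$ on the right of the \eqref{inter2}-analogue.

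First I would establish $\xi_h^{n+1}\le 0$. Summing \eqref{mass:por:lag:1} and \eqref{mass:por:lag:2a} yields
\begin{equation*}
\frac{\frac{3}{2} u_h^{n+1}-A_2(u_h^n)}{\delta t}+\mathcal{L}^n_h\tilde u_h^{n+1}=\lambda_h^{n+1}+\xi_h^{n+1},
\end{equation*}
and testing against $1$, using $<\mathcal{L}^n_h\tilde u_h^{n+1},1>=0$ and the mass conservation \eqref{mass:por:lag:2c} (so that $[u_h^{n+1},1]=[u_h^{n},1]=[u_h^{n-1},1]$ and the weight $\frac{3}{2}-2+\frac{1}{2}=0$ kills the time difference), gives $[\lambda_h^{n+1},1]+\xi_h^{n+1}|\Omega|=0$, hence $\xi_h^{n+1}=-[\lambda_h^{n+1},1]/|\Omega|\le 0$ because $\lambda_h^{n+1}\ge 0$. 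Since $u_h^{n+1}\ge 0$ pointwise gives $[u_h^{n+1},1]\ge 0$, the extra term $4\delta t\,\xi_h^{n+1}[u_h^{n+1},1]\le 0$ and can be discarded in the favourable direction.

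The delicate point, which I expect to be the main obstacle, is the analogue of \eqref{larger}. Now $\tilde u_h^{n+1}-u_h^{n+1}=-\frac{2\delta t}{3}(\mu_h^{n+1}-\mu_h^{n})$, so the cross term becomes $-\frac{8\delta t}{3}[u_h^{n+1}-u_h^{n},\mu_h^{n+1}-\mu_h^{n}]$, which a priori carries the $\xi$-contributions $\xi_h^{n+1}[u_h^{n+1},1]$, $\xi_h^{n}[u_h^{n},1]$, $\xi_h^{n}[u_h^{n+1},1]$ and $\xi_h^{n+1}[u_h^{n},1]$. The crux is that these cancel because the mass is conserved: $[u_h^{n+1},1]=[u_h^{n},1]$ forces the four scalar products to sum to zero, and together with $[u_h^{j},\lambda_h^{j}]=0$ the expression collapses to exactly $\frac{8\delta t}{3}\{[u_h^{n+1},\lambda_h^{n}]+[u_h^{n},\lambda_h^{n+1}]\}\ge 0$, just as in \eqref{larger}. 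Checking this cancellation carefully — it uses both the complementarity $[u_h^{j},\lambda_h^{j}]=0$ and the equal-mass identity — is the real content beyond Theorem~\ref{secondstab}.

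With these facts in hand, the rest is the same bookkeeping as in Theorem~\ref{secondstab}: summing the prediction identity and the squared-correction identity cancels the $\mu_h^n$ cross terms, and after applying the BDF2 identity, the splitting \eqref{inter3} and the Cauchy--Schwarz bound \eqref{stab:cau}, one drops the nonnegative leftovers (now including the discarded $\xi$-term and the \eqref{larger}-analogue) to reach the counterpart of \eqref{stab:final} with $\mu_h$ in place of $\lambda_h$. Summing from $n=1$ to $m-1$ and handling the first step with the mass-conserving first-order estimate \eqref{mass1:stab:results} at $m=1$ (in place of \eqref{firststep}) then gives the stated bound, with $\|\lambda_h^m+\xi_h^m\|^2$ replacing $\|\lambda_h^m\|^2$. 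Dissipativity and unconditional stability follow immediately once $<\mathcal{L}^n_h v_h,v_h>\ge 0$.
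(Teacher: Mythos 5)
Your proposal is correct and follows essentially the same route as the paper's proof: you test the prediction step with $4\delta t\,\tilde u_h^{n+1}$, square the correction identity written in terms of $\lambda_h+\xi_h$, prove $\xi_h^{n+1}=-[\lambda_h^{n+1},1]/|\Omega|\le 0$ by testing the summed scheme against $1$, use mass conservation to kill the $\xi$-contributions in the cross term so that it reduces to $\frac{8\delta t}{3}\{[u_h^{n+1},\lambda_h^n]+[u_h^n,\lambda_h^{n+1}]\}\ge 0$, discard the sign-definite $\xi_h^{n+1}[u_h^{n+1},1]$ term, and close with the first-order mass-conserving estimate for the initial step. All the key identities you flag (the $\xi$-cancellation in the cross term and the sign of the surviving $[u_h^{n+1},\xi_h^{n+1}]$ term) are exactly the ones the paper isolates.
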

\begin{proof}
 The proof is again similar to that of Theorem \ref{secondstab} so we just point out the differences below.
 
First, \eqref{stab:1} should be replace by
\begin{equation}\label{stab:1c}
\begin{split}
[3\tilde{ u}_h^{n+1}-4 u_h^n+ u_h^{n-1},2\tilde{ u}_h^{n+1}]
+4\delta t< \mathcal{L}_h^n\tilde{ u}_h^{n+1},\tilde{ u}_h^{n+1}>
=4\delta t[\lambda_h^n+\xi_h^n,\tilde{ u}_h^{n+1}].
\end{split}
\end{equation}
 Then  \eqref{larger} should be replaced by
  \begin{equation}\label{mass2:larger2}
\begin{split}
4[ u_h^{n+1}- u_h^n,\tilde{ u}_h^{n+1}- u_h^{n+1}]&=-\frac{8\delta t}{3}[ u_h^{n+1}- u_h^n,\lambda_h^{n+1}+\xi_h^{n+1}-(\lambda_h^n+\xi_h^n)]\\&=
-\frac{8\delta t}{3}[ u_h^{n+1}- u_h^n,\lambda_h^{n+1}-\lambda_h^n]-\frac{8\delta t}{3}[ u_h^{n+1}- u_h^n,\xi_h^{n+1}-\xi_h^n]
\\&=-\frac{8\delta t}{3}[ u_h^{n+1}- u_h^n,\lambda_h^{n+1}-\lambda_h^n]\\& =\frac{8\delta t}{3}\{[ u_h^{n+1},\lambda_h^n]+[u_h^n,\lambda_h^{n+1}]\}\ge 0,
\end{split}
\end{equation}
where we used the fact that
\begin{equation*}
-\frac{8\delta t}{3}[ u_h^{n+1}- u_h^n,\xi_h^{n+1}-\xi_h^n]=-\frac{8\delta t}{3}(\xi_h^{n+1}-\xi_h^n)\left([ u_h^{n+1},1]- [u_h^n,1]\right)=0.
\end{equation*}
Next,  \eqref{eq:proj} should be replaced by 
\begin{equation}\label{mass2:inter3}
3 u_h^{n+1}(\bz)-2\delta t(\lambda_h^{n+1}(\bz)+\xi_h^{n+1})=3\tilde{ u}_h^{n+1}(\bz)-2\delta t(\lambda_h^{n}(\bz)+\xi_h^{n}).
\end{equation}
Taking the discrete inner product of \eqref{mass2:inter3} with itself on both sides, we obtain
\begin{equation}\label{mass2:inter4}
\begin{split}
&3 \|u_h^{n+1}(\bz)\|^2+\frac 43\delta t^2\|\lambda_h^{n+1}(\bz)+\xi_h^{n+1}\|^2-2\delta t[u_h^{n+1},\lambda_h^{n+1}+\xi_h^{n+1}]\\&=3\|\tilde{ u}_h^{n+1}(\bz)\|^2+\frac 43\delta t^2\|\lambda_h^{n}(\bz)+\xi_h^{n}\|^2-2\delta t[\tilde u_h^{n+1},\lambda_h^n+\xi_h^n].
\end{split}
\end{equation}
Summing up \eqref{mass:por:lag:1} and \eqref{mass:por:lag:2a}, we obtain
\begin{equation}\label{mass:sum}
\frac{\alpha_k u_h^{n+1}-A_k(u_h^n)}{ \delta t}+\mathcal{L}^n_h \tilde u_h^{n+1}=\lambda_h^{n+1}+\xi_h^{n+1}.
\end{equation}
Taking the discrete inner product of \eqref{mass:sum}  with 1 on both sides, using \eqref{mass:por:lag:2c}, we obtain
\begin{equation}
[\lambda_h^{n+1}+\xi_h^{n+1},1]=[\lambda_h^{n+1},1]+[\xi_h^{n+1},1]=0,
\end{equation}
which implies $\xi_h^{n+1} \le 0$ since $\lambda_h^{n+1}\ge 0$.
Therefore,
\begin{equation}\label{mass2:ineq}
-2\delta t[u_h^{n+1},\lambda_h^{n+1}+\xi_h^{n+1}]
=-2\delta t[u_h^{n+1},\xi_h^{n+1}] \ge 0.
\end{equation}
Then, summing up \eqref{stab:1c} with  \eqref{mass2:inter4}, and using \eqref{mass2:larger2} and  \eqref{mass2:ineq}, after dropping some unnecessary terms, we arrive at 
\begin{equation}\label{mass2:stab:finalb}
\begin{split}
&4(\| u_h^{n+1}\|^2-\| u_h^n\|^2)+\|2 u_h^{n+1}- u_h^n\|^2-\|2 u_h^n- u_h^{n-1}\|^2\\&
+2\|\tilde{ u}_h^{n+1}- u_h^{n+1}\|^2
+\frac 43\delta t^2(\|\lambda_h^{n+1}+\xi_h^{n+1}\|^2-\|\lambda_h^n+\xi_h^n\|^2)
+4\delta t< \mathcal{L}_h^n\tilde{ u}_h^{n+1},\tilde{ u}_h^{n+1}>\le 0.
\end{split}
\end{equation}
For the first step, we take $m=1$ in \eqref{mass1:stab:results} to obtain
\begin{equation}\label{firststepb}
\begin{split}
&\| u_h^{1}\|^2
+\|\tilde{ u}_h^{1}- u_h^{0}\|^2+2\delta t  < \mathcal{L}_h^0\tilde{ u}_h^{1},\tilde{ u}_h^{1}>
+\delta t^2\|\lambda_h^{1}+\xi_h^1\|^2\le 
\|u_h^0\|^2.
\end{split}
\end{equation}
Finally, summing up \eqref{mass2:stab:finalb} from $n=1$ to $n=m-1$ with \eqref{firststepb} multiplied by 4, we obtain, after dropping some unnecessary terms, 
\begin{equation*}
4\| u_h^{m}\|^2+\frac 43\delta t^2\|\lambda_h^{m}+\xi_h^m\|^2+
4\delta t \sum\limits_{n=0}^{m-1} <\mathcal{L}_h^n\tilde{ u}_h^{n+1},\tilde{ u}_h^{n+1}>
\le \|2 u_h^1- u_h^{0}\|^2+4\| u_h^0\|^2,
\end{equation*}
which implies the desired result.
\end{proof}

\begin{remark}
The results in the previous theorems are derived for a general approximate operator $\mathcal{L}_h^n$. They imply in particular:
\begin{itemize}
\item If $\mathcal{L}_h^n$ is non-negative, e.g., as in \eqref{lagged} with application to Porous Media equation, then the  first- and second-order  positivity preserving schemes with Lagrange multiplier are  unconditionally energy stable.

\item  If one can  show,  perhaps under certain  condition $\Delta t \le c_0 h^\alpha$  with a semi-implicit discretization (where $\alpha>0$ depending on the problem and discretization), that for the usual schemes,  i.e., by setting $\lambda^n_h\equiv 0$ for all $n$,
we have  
 $$ \delta t  \sum\limits_{n=1}^{m-1} < \mathcal{L}^n_h\tilde{ u}_h^{n+1},\tilde{u}_h^{n+1}>\; \ge \;\beta \delta t  \sum\limits_{n=1}^{m-1} a_h(\tilde{u}_h^{n+1},\tilde{u}_h^{n+1}) -C_1, \;\forall m\le T/{\Delta t} -1,$$
 where $\beta$ is some positive constant in $(0,1]$ and $T$ is the final time, 
 then we derive from the above and \eqref{stab:rsults} that the solutions of the corresponding schemes  \eqref{por:lag:1}-\eqref{por:lag:2} and \eqref{mass:por:lag:1}-\eqref{mass:por:lag:2} with $k=1,2$ are bounded in the sense that  
\begin{equation*}
\| u_h^{m}\|^2+   2\beta \delta t  \sum\limits_{n=1}^{m-1} a_h(\tilde{ u}_h^{n+1},\tilde{ u}_h^{n+1})\le \|u_h^0\|^2+C_1,\;\forall m\le T/{\Delta t} -1.
\end{equation*}
\end{itemize}
\end{remark}
 
\begin{remark}

 We are unable to prove similar results for  the schemes \eqref{por:lag:1}-\eqref{por:lag:2} and  \eqref{mass:por:lag:1}-\eqref{mass:por:lag:2} with $k\ge 3$. The situation is similar to the pressure-correction schemes for the  Navier-Stokes equations \cite{GMS06}.
\end{remark}

\section{Numerical experiments}
In this section, we carry out various numerical experiments to demonstrate the performance of proposed positivity  preserving schemes.  We use spectral Galerkin methods with numerical integration   \cite{shen2011spectral} for all cases, namely, Fourier-spectral method is used for problems with periodic boundary conditions, while a Legendre-spectral method  is used for problems with Dirichlet or Neumann boundary conditions. Note that in general it is much more difficult to preserve positivity with a spectral method than with a lower-order finite element or finite difference method.  Below,  $h=1/N$ where  $N$ is the number of collocation points in each direction.

\subsection{Convergence rate}
We first  test the convergence rates in time for the positivity preserving schemes using the  Allen-Cahn equation \cite{allen1979microscopic}  
\begin{equation}\label{allen:cahn}
u_t-\Delta u +\frac{1}{\eps^2}u(u-1)(u-\frac 12)=0,
\end{equation}
with periodic boundary condition in $\Omega=[0,2\pi)^2$. It is well-known that the solution will remain in $[0,1]$ if the values of the initial condition $u(x,y,0)$ are in $[0,1]$. In particular, it is positivity preserving.

 We choose the following initial condition
\begin{equation}\label{ini:allen}
u(x,y,0)=\frac 12\big(1+\tanh(\frac{1-\sqrt{(x-\pi)^2+(y-\pi)^2}}{\sqrt{2}\eps})\big),
\end{equation}
with $\eps^2=0.001$ and use $32^2$ uniform collocation points in $[0,2\pi)^2$, i.e., $\Sigma_h=\{x_{jk}=(\frac j{2\pi},\frac k{2\pi}): j,k=0,1,\cdots, 31\}$. We note that with this coarse mesh, the usual semi-implicit Fourier-collocation method will produce numerical solutions with negative values, i.e., the spatial discretized problem  \eqref{kkt:v} will lead to non zero $\lambda_h$. The spatial discretized problem is smooth in time so it can be used to  test the convergence rates of the positivity preserving time discretization schemes. On the other hand, the Fourier-spectral method with $32\times 32$ uniform collocation points is enough to provide a reasonable approximation to this problem as shown in Fig.\;\ref{u_allen}.
As a reference solution, we use the numerical solution computed by  the scheme  \eqref{por:lag:1b}-\eqref{por:lag:2b} with $k=2$ and $\delta t=10^{-6}$.

 \begin{figure}[htbp]
\centering
\subfigure[$u_h$ at $ t=0.01$]{
\includegraphics[width=0.30\textwidth,clip==]{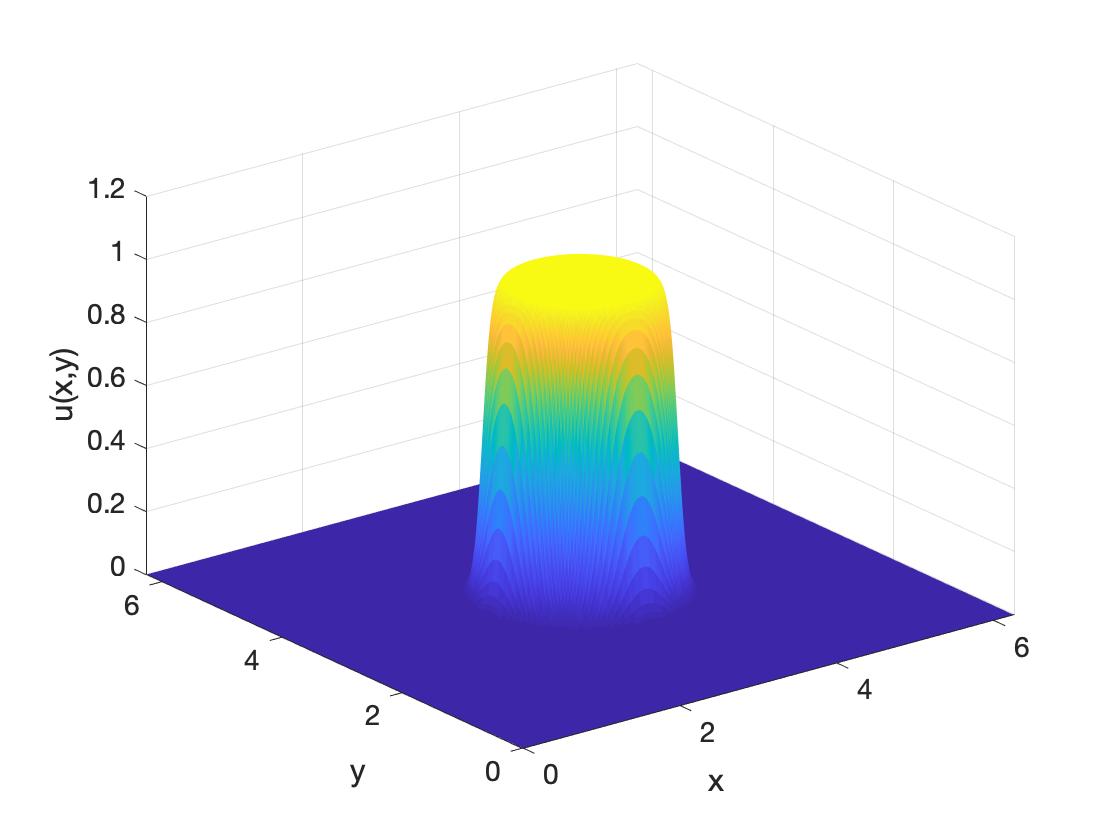}}
\subfigure[$u_h$ at $t=0.4$]{
\includegraphics[width=0.30\textwidth,clip==]{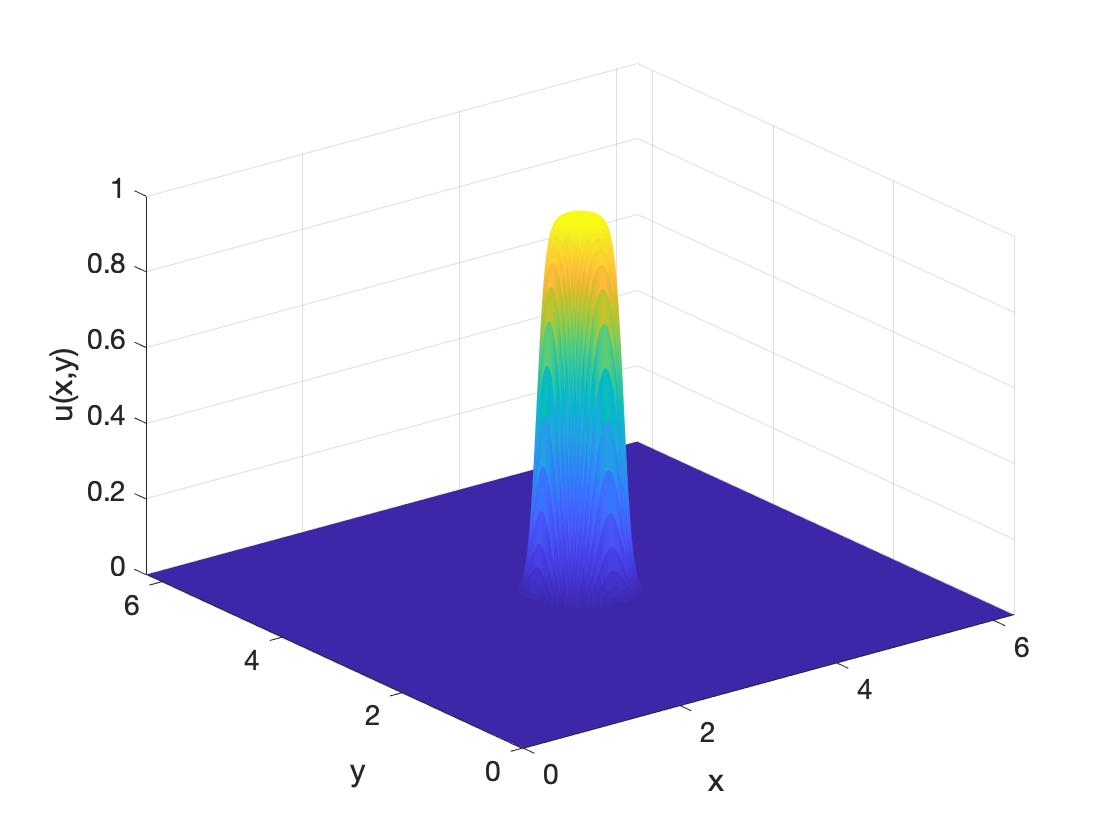}}
\caption{Numerical solution of Allen-Cahn equation \eqref{allen:cahn} with $\eps^2=0.001$  at $t=0.01$ and $t=0.4$ computed with  $32\times 32$ Fourier modes but plotted on the $256\times 256$ grid. }\label{u_allen}
\end{figure}

 In Table \ref{table1}, we list the $L^{\infty}$ errors of numerical solution between the reference solution  $u^{n+1}_h$ obtained using the  schemes \eqref{por:lag:1}-\eqref{por:lag:2}  with $k=1,2$. 
 We observe from Table \ref{table1} that the  schemes \eqref{por:lag:1}-\eqref{por:lag:2} are indeed $k$-th order accurate.

  \begin{table}[ht!]
\centering
\begin{tabular}{r||c|c|c|c}
\hline
$\delta t$      & {\eqref{por:lag:1}-\eqref{por:lag:2}  $k=1$}  & Order          & {\eqref{por:lag:1}-\eqref{por:lag:2} $k=2$}  & Order \\ \hline
$4\times 10^{-5}$  & $2.71E(-4)$ & $-$   &$1.20E(-5)$  & $-$      \\\hline
$2\times 10^{-5}$  & $1.37E(-4)$ & $0.98$     &$2.97E(-6)$ & $2.01$  \\\hline
$1\times 10^{-5}$   & $6.85E(-5)$ & $1.00$   &$7.31E(-7)$ &$2.02$   \\\hline
$5\times 10^{-6}$  & $3.42E(-5)$ & $1.00$  &$1.74E(-7)$ &$2.07$    \\ \hline
$2.5\times 10^{-6}$  & $1.71E(-5)$ & $1.00$  &$3.54E(-8)$&$2.30$  \\\hline
\hline
\end{tabular}
\vskip 0.5cm
\caption{Accuracy test: The $L^{\infty}$ errors between $u^n_h$ and the reference solution at $t=0.01$ for the Allen-Cahn equation \eqref{allen:cahn}  with $\eps^2=0.001$ using \eqref{por:lag:1}-\eqref{por:lag:2}.}\label{table1}
\end{table}

\subsection{Porous medium equation}
\label{sec:main}
In this subsection, we consider the   porous medium equation (PME) (\cite{vazquez2007porous}): 
\begin{eqnarray}
&&  u_t=\Delta  u^m=m\Grad\cdot( u^{m-1}\Grad   u), \label{PME:1}\end{eqnarray}
with homogeneous Dirichlet boundary condition in $\Omega=(-5,5)^d$ $(d=1,2,3)$ where $m \geq 1$ is a physical parameter. The porous medium equation has wide applications in various areas, including fluid dynamics, heater transfer and image processing. We  observe from \eqref{PME:1} that the PME is  degenerate  and its solution has to be positive. 

We shall use the Legendre-Galerkin method with numerical integration in space. Let $P_N$ be the set of polynomials with degree less than or equal to $N$ in each direction, and let $\Sigma_h$ be the set of  the interior Legendre-Gauss-Lobatto points, i.e., in the one dimensional case,  $ \Sigma_h=\{x_k:\;k=1,2,\cdots, N-1\}$ where $\{x_k\}$ are the roots of $L'_N(x)$ with $L_N$ being the Legendre polynomial of $N$-th degree, and in the multi-dimensional case, $\Sigma_h$ is obtained by the tensor product of one-dimensional set. We set $X_h=\{v_h\in P_N: v_h|_{\partial\Omega=0}\}$, and  use the scheme \eqref{por:lag:1}- \eqref{por:lag:2} with $k=2$ and  $ \mathcal{L}^n_h( v_h)=-\Grad\cdot(m( u^{n+1,*})^{m-1}\Grad v_h)$. For the reader's convenience, 
it is explicitly described below:

Find $u_h^{n+1}\in X_h$ such that
\begin{eqnarray}
&&[\frac{3\tilde{ u}_h^{n+1}-4 u_h^n+ u_h^{n-1}}{2\delta t},v_h] +m[( u_h^{n+1,*})^{m-1}\Grad   u_h^{n+1},\Grad  v_h]= [\lambda_h^n,v_h],\quad\forall v_h\in X_h,\label{schem:PME:1}\\
&&\frac{3 u_h^{n+1}(\bz)-3\tilde{ u}_h^{n+1}(\bz)}{2\delta t} =\lambda_h^{n+1}(\bz)-\lambda_h^n(\bz),\quad\forall z\in \Sigma_h,\label{scheme:PME:2}\\
&&\lambda_h^{n+1}(\bz)\ge 0,\; u_h^{n+1}(\bz)\ge 0,\; \lambda_h^{n+1}(\bz) u_h^{n+1}(\bz)=0,\label{scheme:PME:3}
\end{eqnarray}
where 
\begin{equation}\label{ustar}
 u_h^{n+1,*}=\begin{cases} 2u_h^n-u_h^{n-1} \quad if \; u_h^n\ge u_h^{n-1},\\
 \frac{1}{2/u_h^n -1/u_h^{n-1}} \quad if \; u_h^n< u_h^{n-1}.\end{cases}
\end{equation}
At each time step, we need to solve an  elliptic equation with variable coefficients  in  \eqref{schem:PME:1}, which can be  efficiently solved by a preconditioned conjugate gradient iteration with a constant coefficient problem as the preconditioner.

\subsubsection{Comparison with a usual semi-implicit scheme}
We now compare the positivity preserving scheme \eqref{schem:PME:1}-\eqref{scheme:PME:3} with the corresponding usual semi-implicit scheme
\begin{equation}
[\frac{3{ u}_h^{n+1}-4 u_h^n+ u_h^{n-1}}{2\delta t},v_h] +m[( u_h^{n+1,*})^{m-1}\Grad   u_h^{n+1},\Grad  v_h]=0,\quad\forall v_h\in X_h,\label{schem:PME:1b}
\end{equation}
using the  exact solution of the porous medium equation \eqref{PME:1} in  the Barenblatt form
\begin{equation}\label{barenblatt}
 u(x,t)=\frac{1}{t_0^\alpha}\Big(C-\alpha\frac{m-1}{2m}\frac{x^2}{t_0^{2\alpha}}\Big)_{+}^{\frac{1}{m-1}},
\end{equation}
where   $f_{+}=\max\{f,0\}$, $\alpha =\frac{1}{m+1}$, $C=1$ and $t_0=t+1$.  The solution is  compactly  supported in $(0,1)$ with  the interface moving outward in a finite speed. 
 The initial condition for the numerical simulations is chosen as $u(x,0)$.


In Fig,\;\ref{pme:m2}, we  plot the $L^2$ errors by the usual semi-implicit scheme \eqref{schem:PME:1b} and by the positivity preserving \eqref{schem:PME:1}-\eqref{scheme:PME:3} with $m=2$ and set $\delta t=10^{-3}$. We  observe that the errors  grow rapidly  after a short time with $N=128, 256, 512$ using \eqref{schem:PME:1b} since the numerical solution becomes negative at some places; while the error appears to be under control  for at least up to $T=1$ with $N=1024$. On the other hand, by using  \eqref{schem:PME:1}-\eqref{scheme:PME:3},  the errors remain under control and accurate solutions are obtained for all $N$.  We observe from Fig.\;\ref{pme:m2}(c) that, even at $N=1024$, the Lagrange multiplier $\lambda_h$ becomes non-zero in order to maintain positivity.

\begin{figure}[htbp]
\centering
\subfigure[$L^2$ error by  \eqref{schem:PME:1b}.]{
\includegraphics[width=0.45\textwidth,clip==]{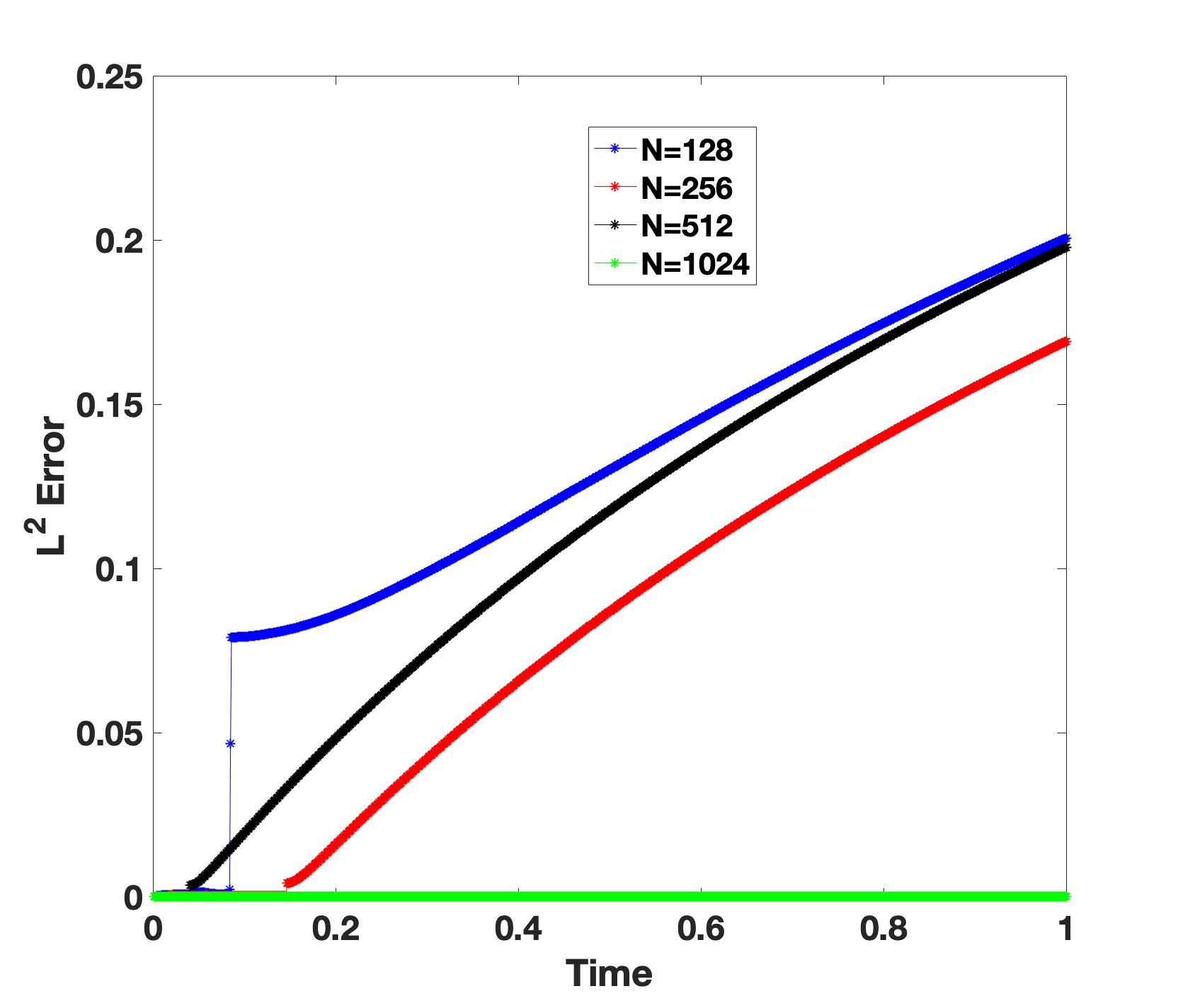}}
\subfigure[$L^2$ error by \eqref{schem:PME:1}-\eqref{scheme:PME:3}.]{
\includegraphics[width=0.45\textwidth,clip==]{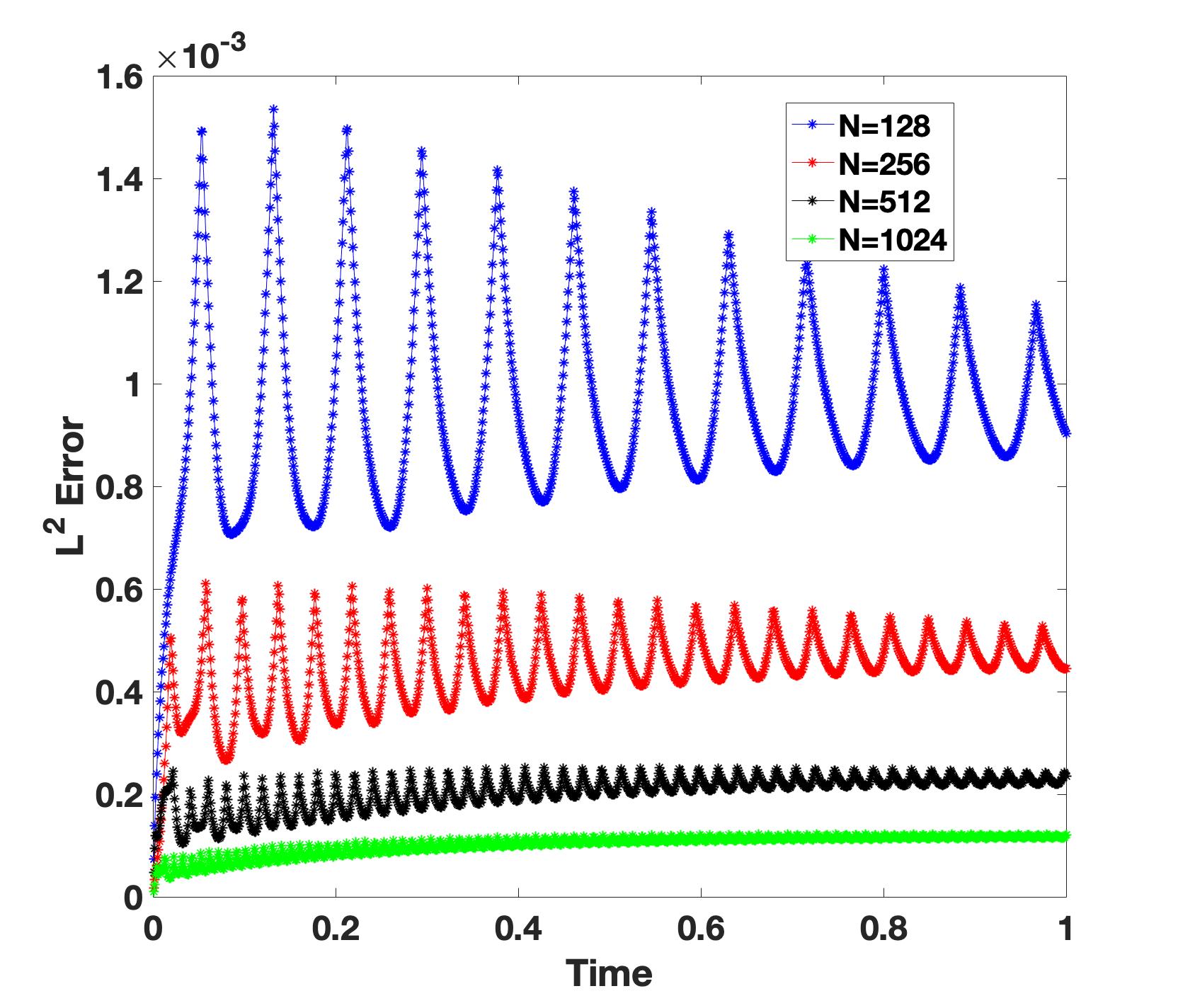}}
\subfigure[$\lambda_h$ by \eqref{schem:PME:1}-\eqref{scheme:PME:3}  with $N=1024$.]{
\includegraphics[width=0.45\textwidth,clip==]{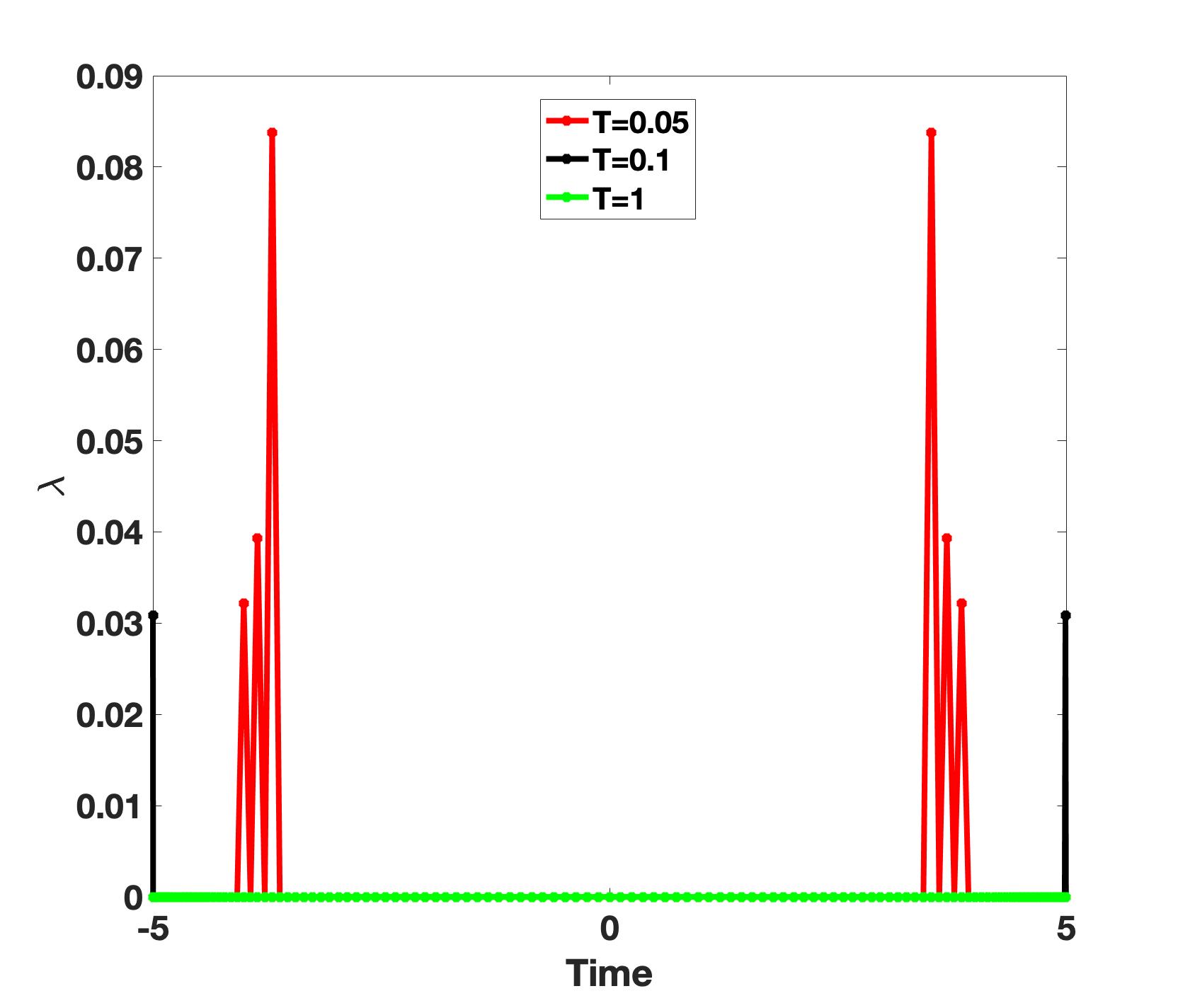}}
\subfigure[$ u_h$ by \eqref{schem:PME:1}-\eqref{scheme:PME:3}  with $N=1024$.]{
\includegraphics[width=0.45\textwidth,clip==]{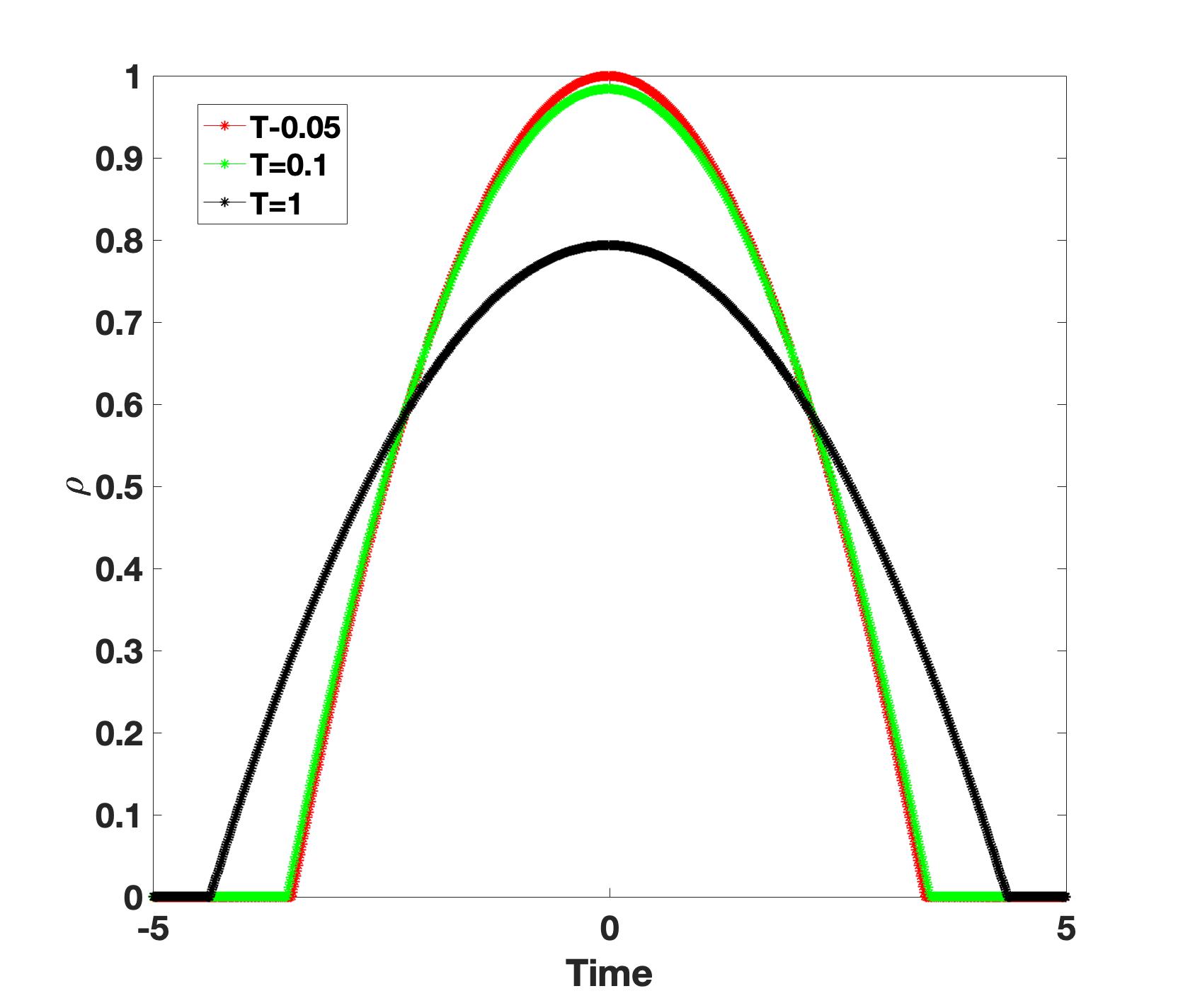}}
\caption{The $L^2$ error of numerical solution  by  \eqref{schem:PME:1b} and by  \eqref{schem:PME:1}-\eqref{scheme:PME:3} with $\delta t=10^{-3}$ and $m=2$.} \label{pme:m2}
\end{figure}

In Fig.\;\ref{pme:f1},  we consider a more challenging case with $m=5$ using  $\delta t=10^{-3}$ and $N=1024$, and plot the numerical solution  at $T=0.1$ using the usual semi-implicit scheme \eqref{schem:PME:1b} and  the positivity preserving \eqref{schem:PME:1}-\eqref{scheme:PME:3}  in Fig.\;\ref{pme:f1}(a) and (b).  We observe that the scheme  \eqref{schem:PME:1b} produces negative values near the interface while the scheme \eqref{schem:PME:1}-\eqref{scheme:PME:3} leads to accurate positive solutions.  We also plot the Lagrange multiplier $\lambda_h$ in Fig.\;\ref{pme:f1}(c) which indicates that $\lambda_h$  becomes larger near the  interface to maintain the positivity of $u_h$.

\begin{figure}[htbp]
\centering
\subfigure[$u_h$ by \eqref{schem:PME:1b}.]{
\includegraphics[width=0.45\textwidth,clip==]{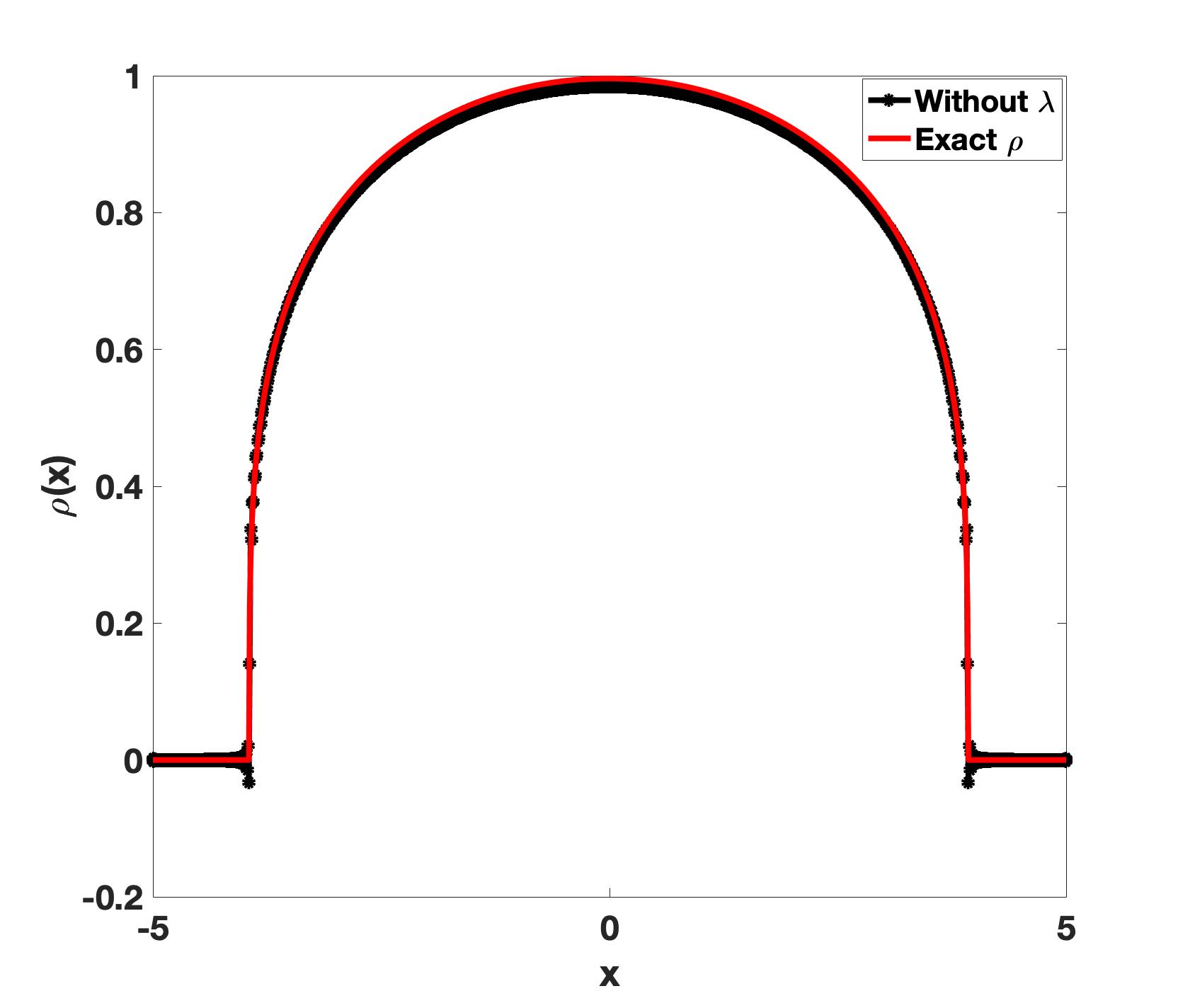}}
\subfigure[$u_h$ by \eqref{schem:PME:1}-\eqref{scheme:PME:3}.]{
\includegraphics[width=0.45\textwidth,clip==]{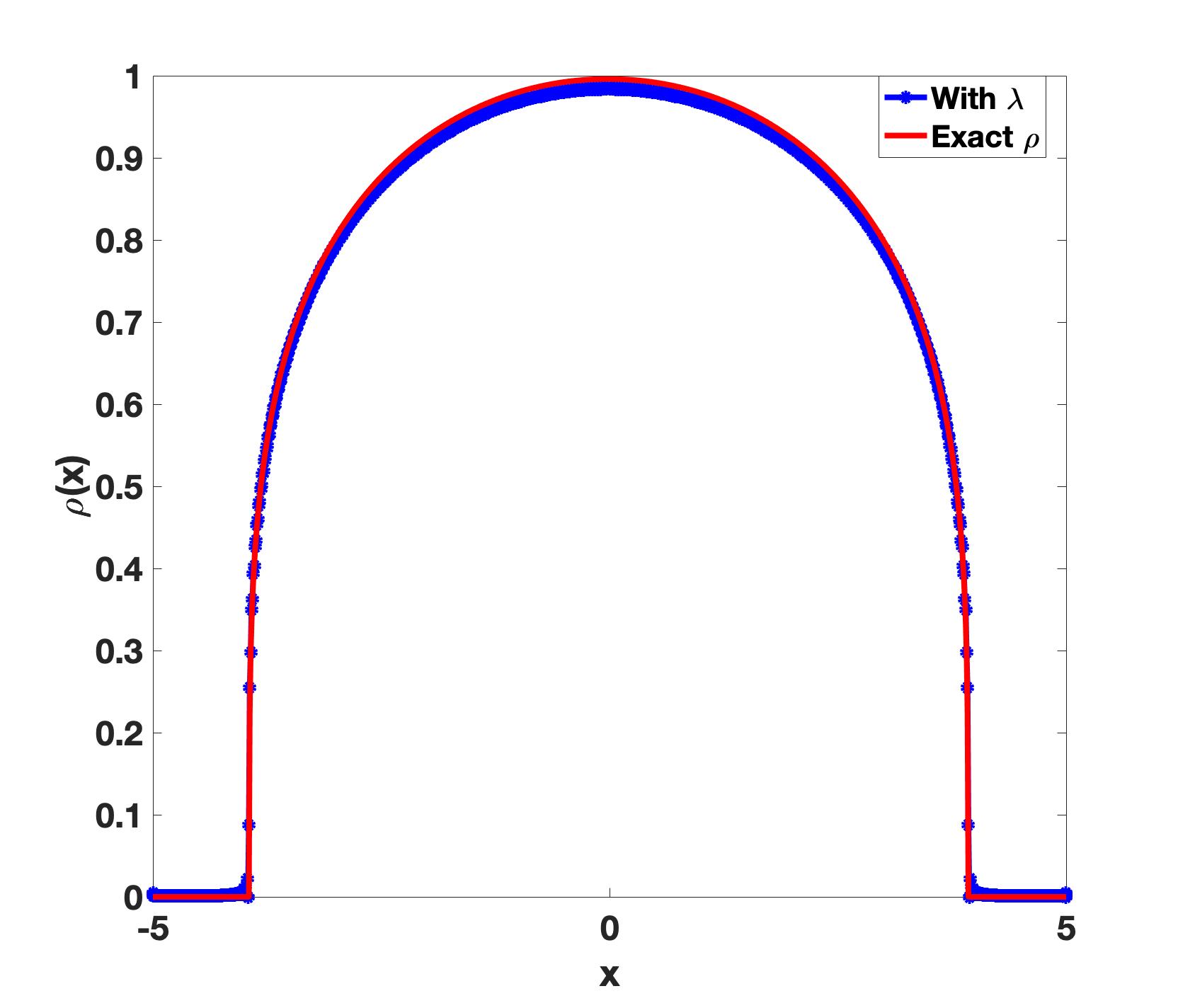}}
\subfigure[Lagrange multiplier $\lambda_h$.]{
\includegraphics[width=0.45\textwidth,clip==]{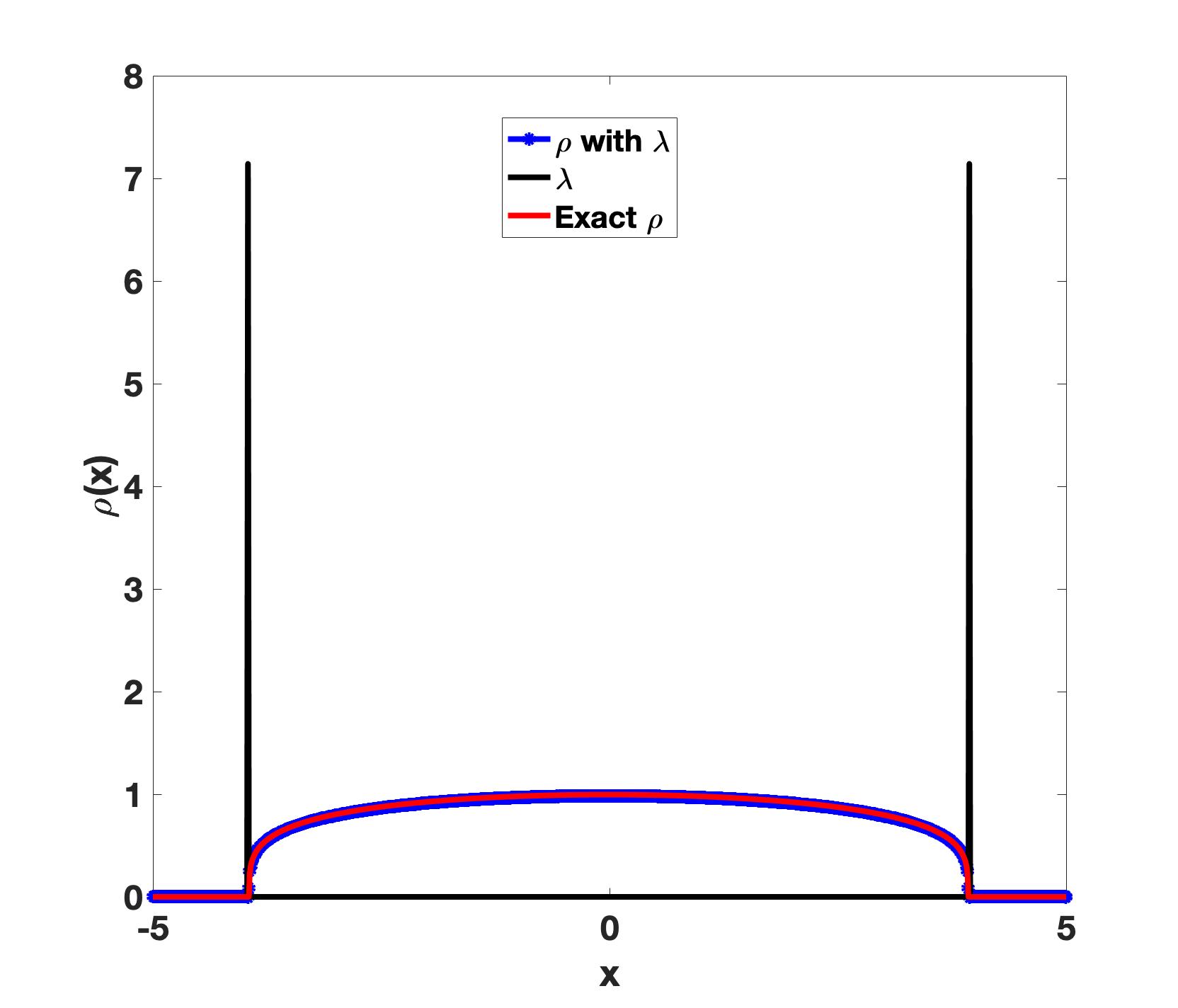}}
\caption{(a) and (b) Numerical solutions $u_h$ at $T=0.1$ with $m=5,\, \delta t=10^{-3},\, N=1024$ by \eqref{schem:PME:1b} and by \eqref{schem:PME:1}-\eqref{scheme:PME:3}. (c) Lagrange multiplier $\lambda_h$ and $ u_h$  by \eqref{schem:PME:1}-\eqref{scheme:PME:3}.}\label{pme:f1}
\end{figure}

Next we consider the 2D case with the exact solution  in the Barenblatt form 
\begin{equation}
 u(x,y,t)|_{t=0}=\frac{1}{t_0^\alpha}\Big(C-\alpha\frac{m-1}{2m}\frac{x^2+y^2}{t_0^{2\alpha}}\Big)_{+}^{\frac{1}{m-1}},
\end{equation}
 where $C=1$, $t_0=t+1$ and $\alpha=\frac{1}{m+1}$. 
 We set $N=200$,  $\delta t=2\times 10^{-4}$ and consider $m=2,5$. We observe from  Fig.\;\ref{PME:2d} that  correct solutions are obtained by the positivity preserving scheme and that the values of Lagrange multiplier $u_h$ are quite large near the interface in order to maintain the positivity of $u_h$. 

\begin{figure}[htbp]
\centering
\subfigure[m=2]{
\includegraphics[width=0.45\textwidth,clip==]{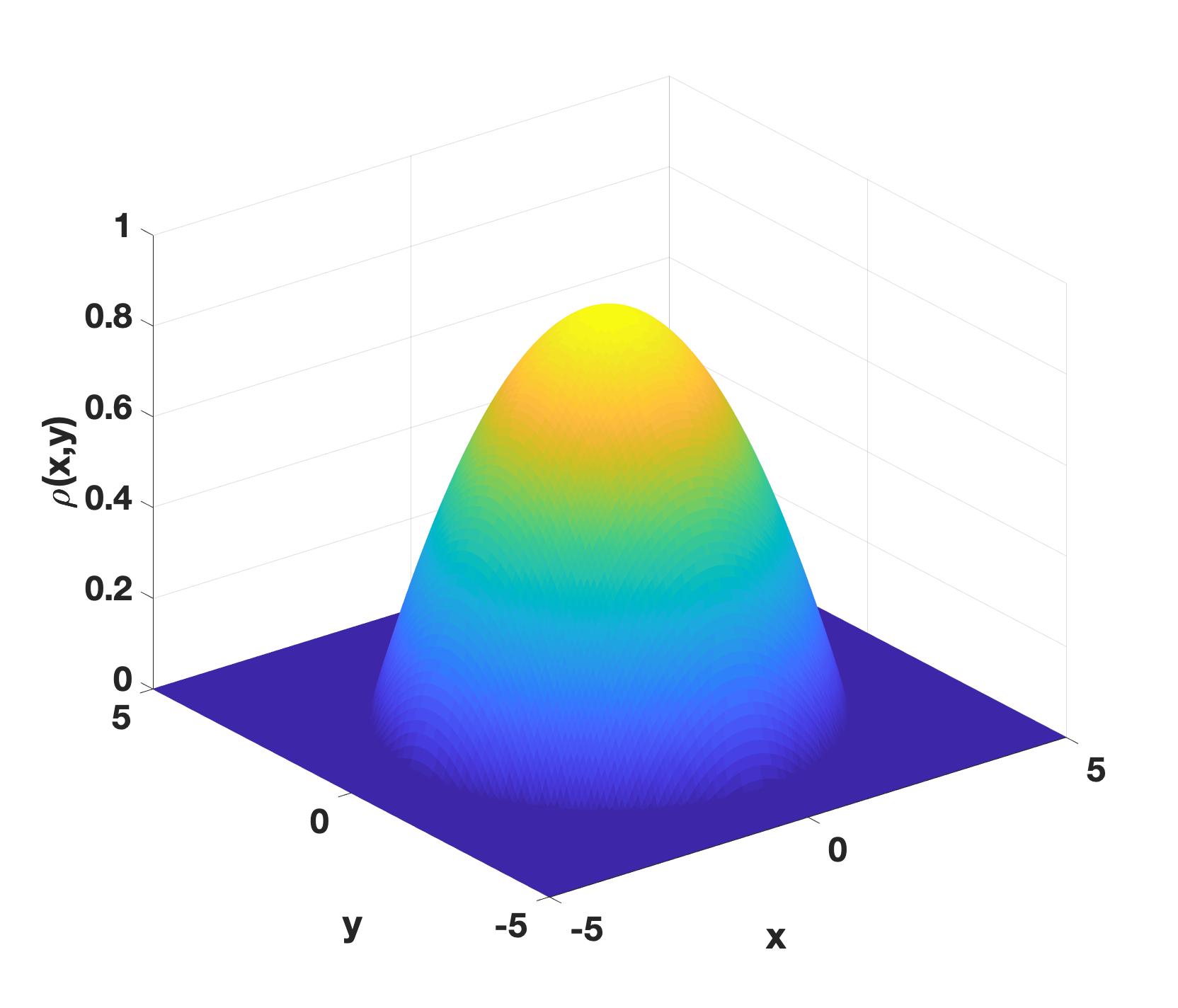}}
\subfigure[m=2]{
\includegraphics[width=0.45\textwidth,clip==]{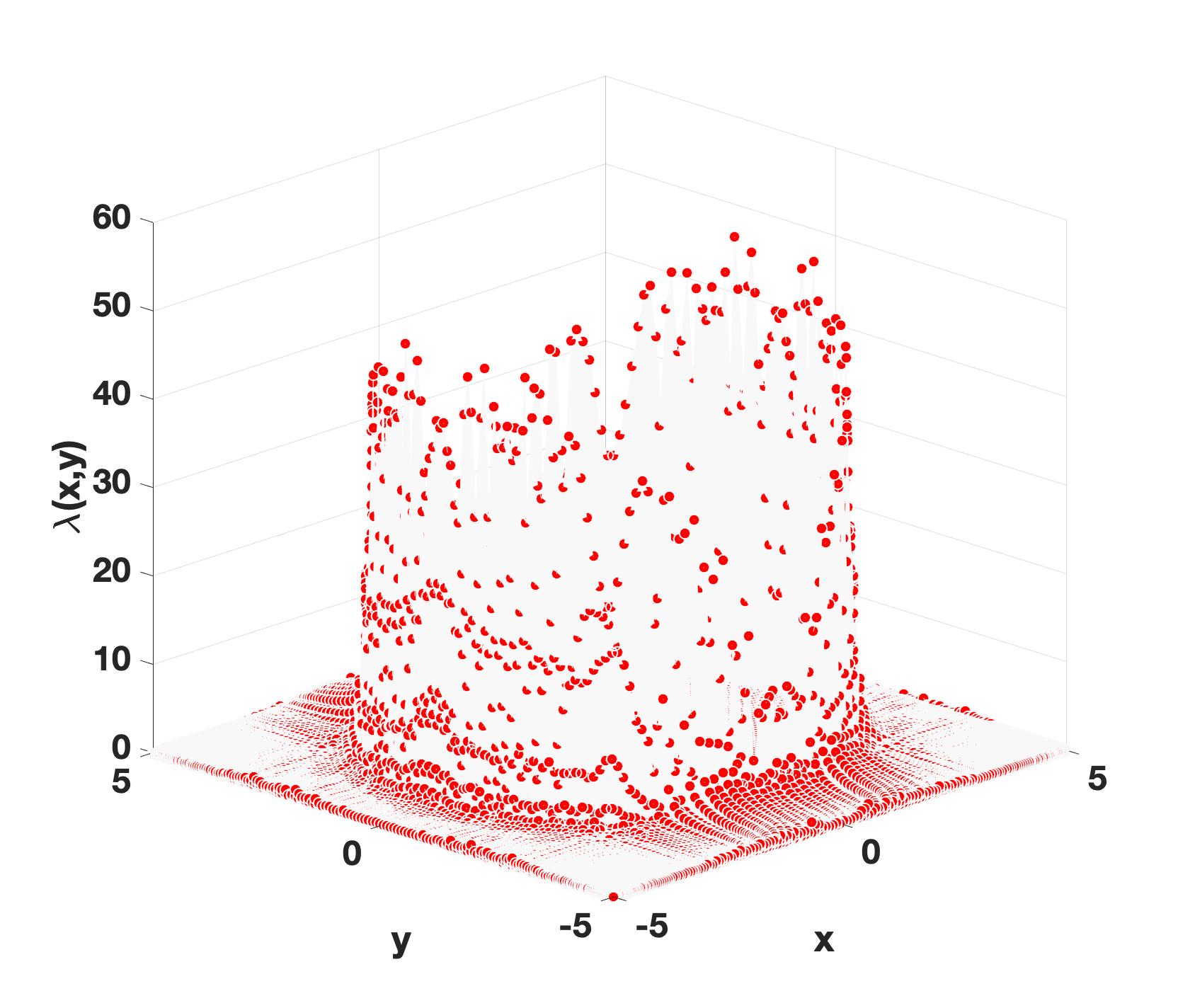}}
\subfigure[m=5]{
\includegraphics[width=0.45\textwidth,clip==]{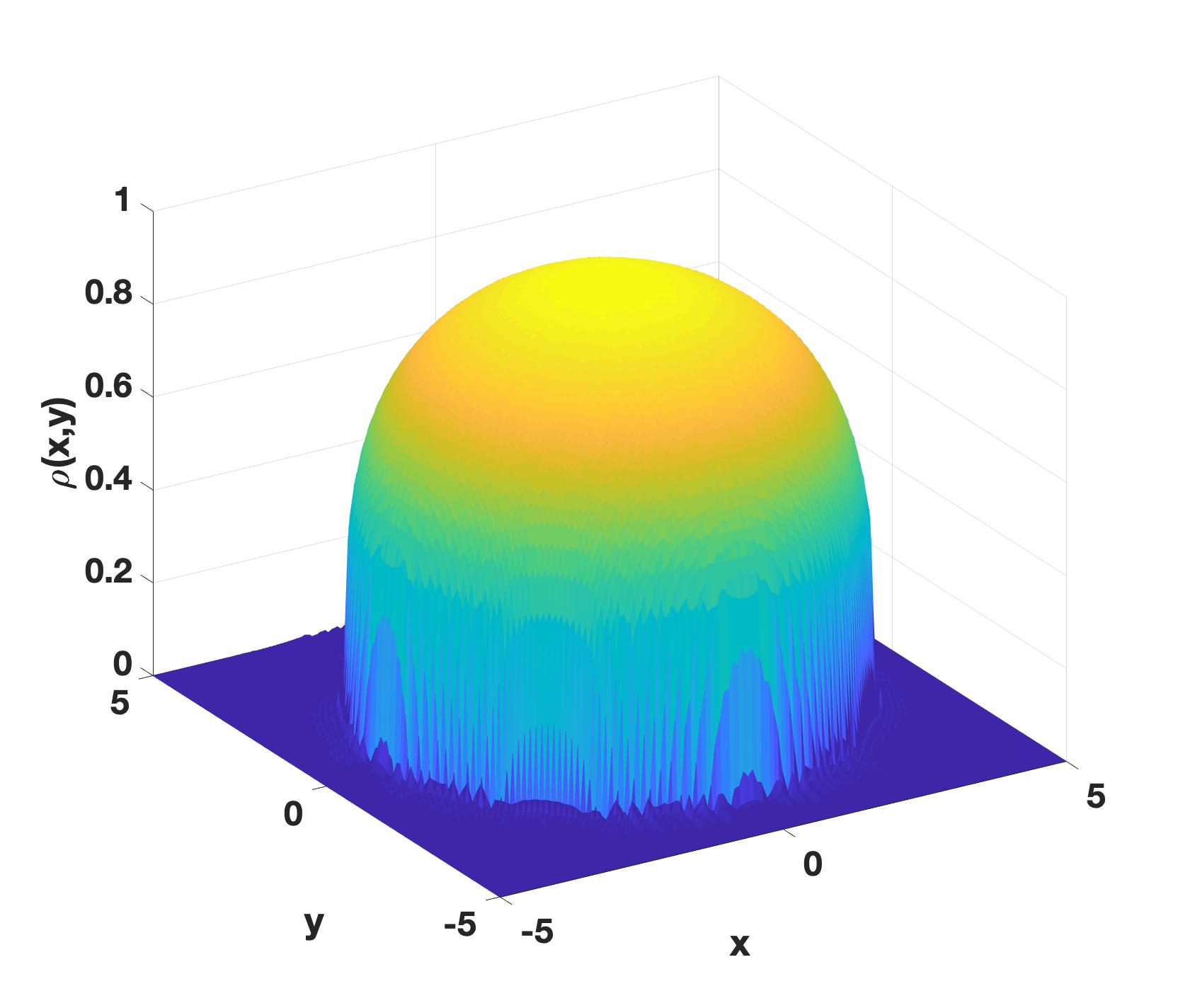}}
\subfigure[m=5]{
\includegraphics[width=0.45\textwidth,clip==]{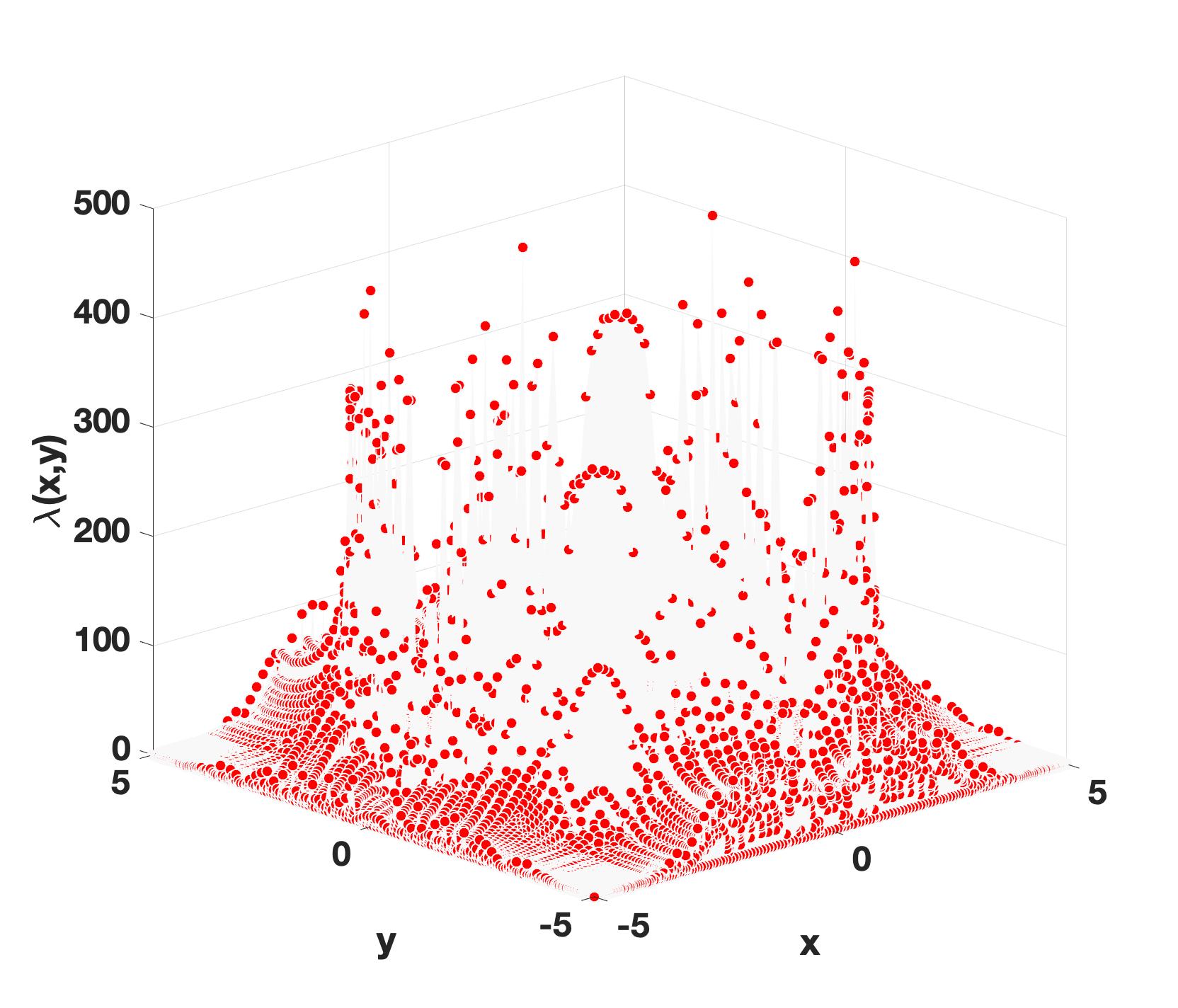}}
\caption{Numerical solution of 2D porous medium equation at $T=0.2$  with  $\delta t=2\times 10^{-4}$ and $N=200$: (a) $u_h$ with $m=2$. (b)  Lagrange multiplier $\lambda_h$ with $m=2$. (c) $ u_h$ with $m=5$. (d) Lagrange multiplier $\lambda_h$ with $m=5$. }\label{PME:2d}
\end{figure}

\subsubsection{Effect of mass conservation}
The porous media equation \eqref{PME:1} with homogeneous Dirichlet boundary conditions is mass conserving. So we  compare the second-order positivity conserving schemes without mass conservation and with mass conservation for the porous medium equation. The results with $\delta t=10^{-4}$ and $N=128$ are plotted  in Fig.\;\ref{mass_error_compare}. We observe that the scheme with mass conservation preserves the mass and is slightly more accurate  in terms of $L^2$ error than the scheme without mass conservation whose mass is monotonically increasing. Only two iterations are needed at each time step to solve $\xi$ using secant method  from Fig.\;\ref{mass_error_compare}. We can also observe Lagrange multiplier $\xi \le 0$ in  time interval $[0, 2]$. 

\begin{figure}[htbp]
\centering
\subfigure[$L^2$ error]{
\includegraphics[width=0.45\textwidth,clip==]{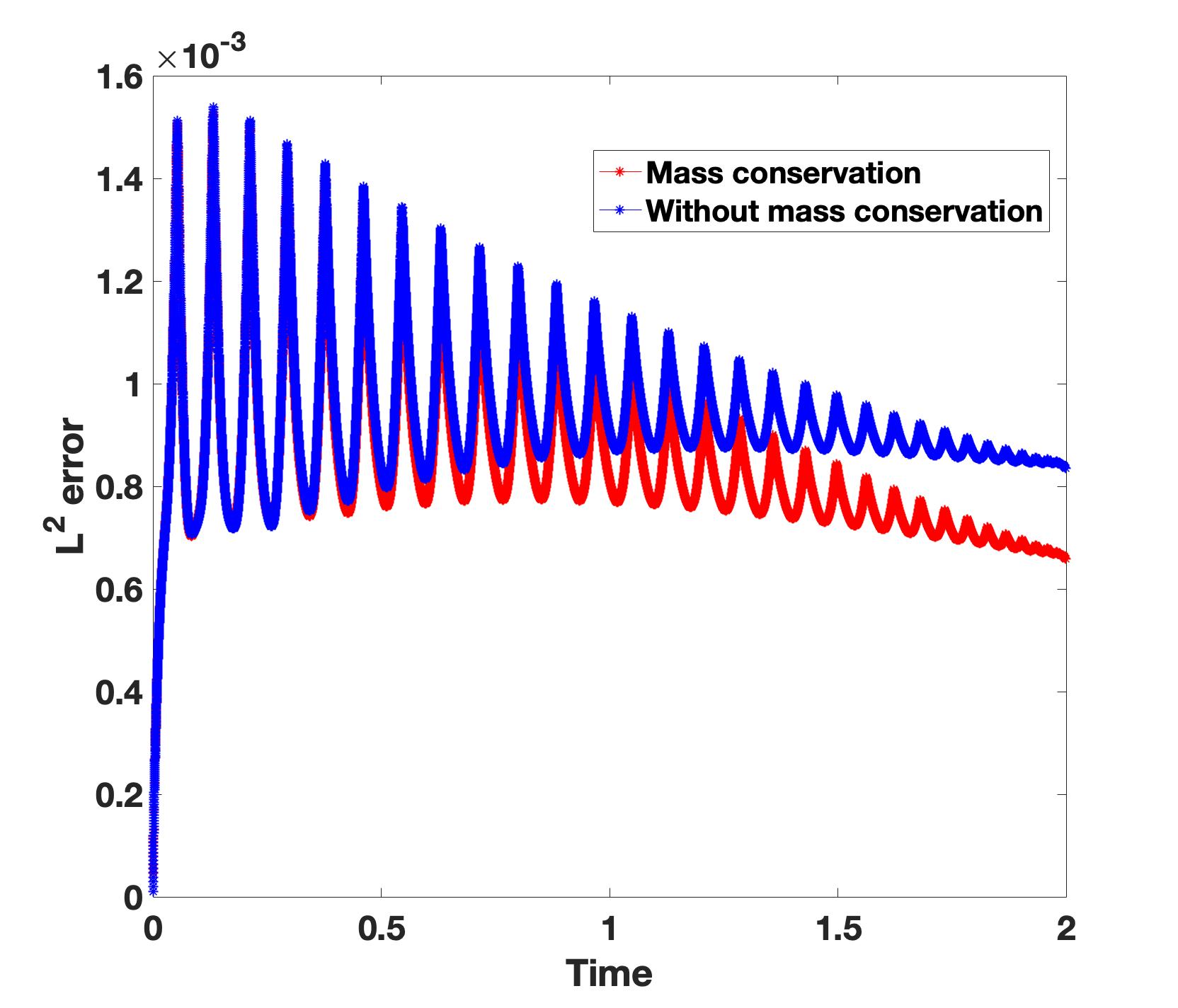}}
\subfigure[Evolution of mass]{
\includegraphics[width=0.45\textwidth,clip==]{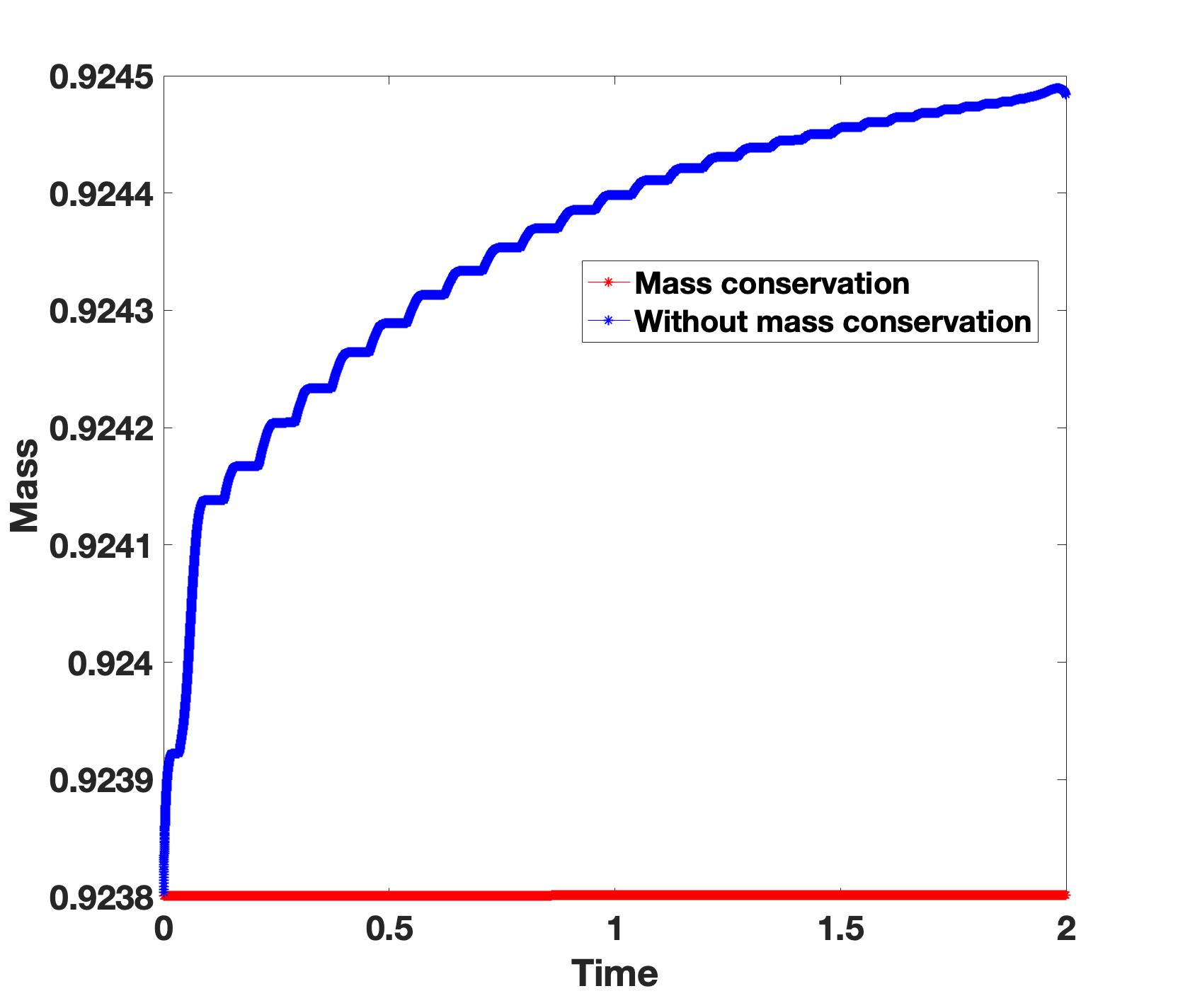}}
\subfigure[$\xi$ ]{
\includegraphics[width=0.45\textwidth,clip==]{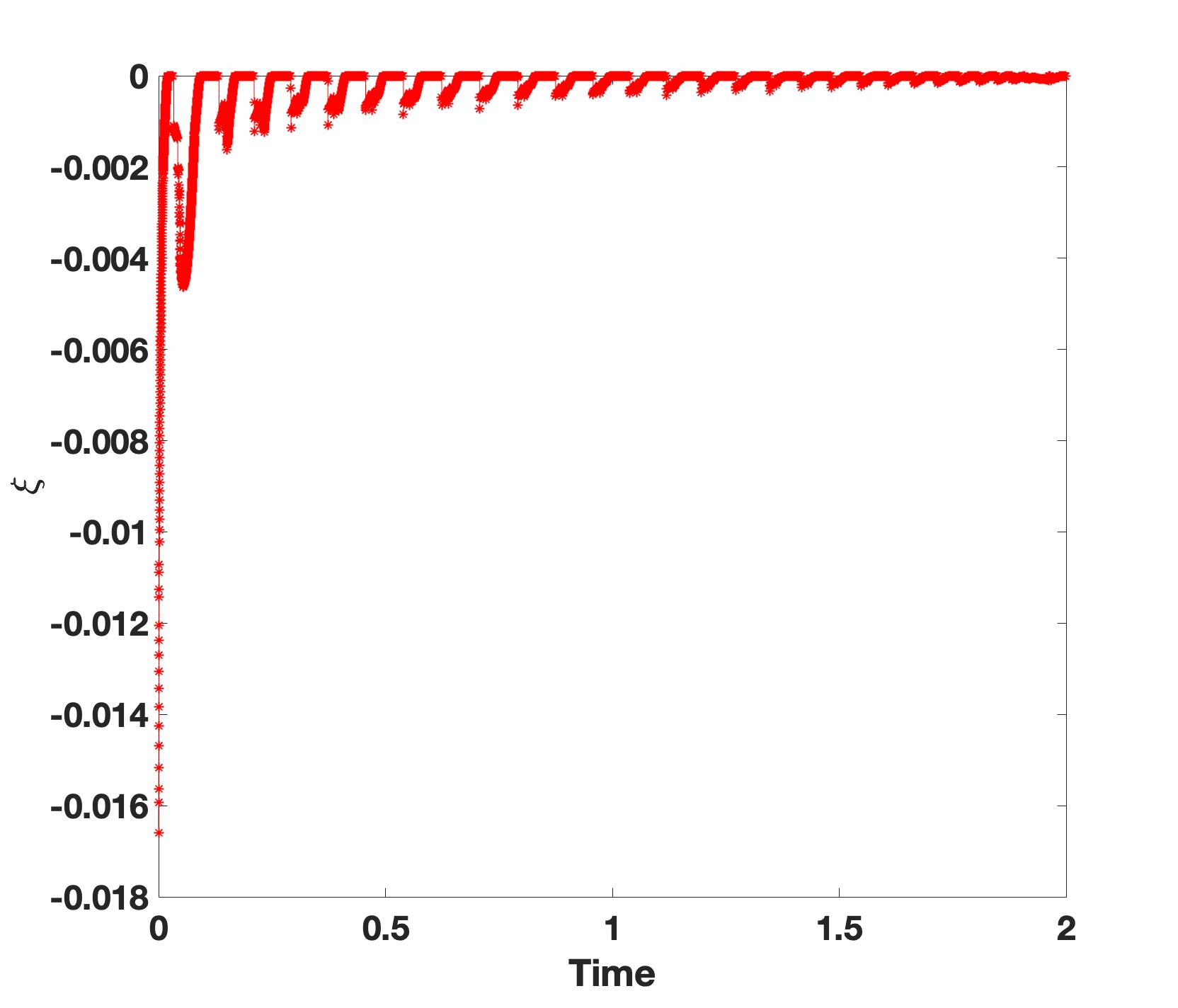}}
\subfigure[Iteration numbers]{
\includegraphics[width=0.45\textwidth,clip==]{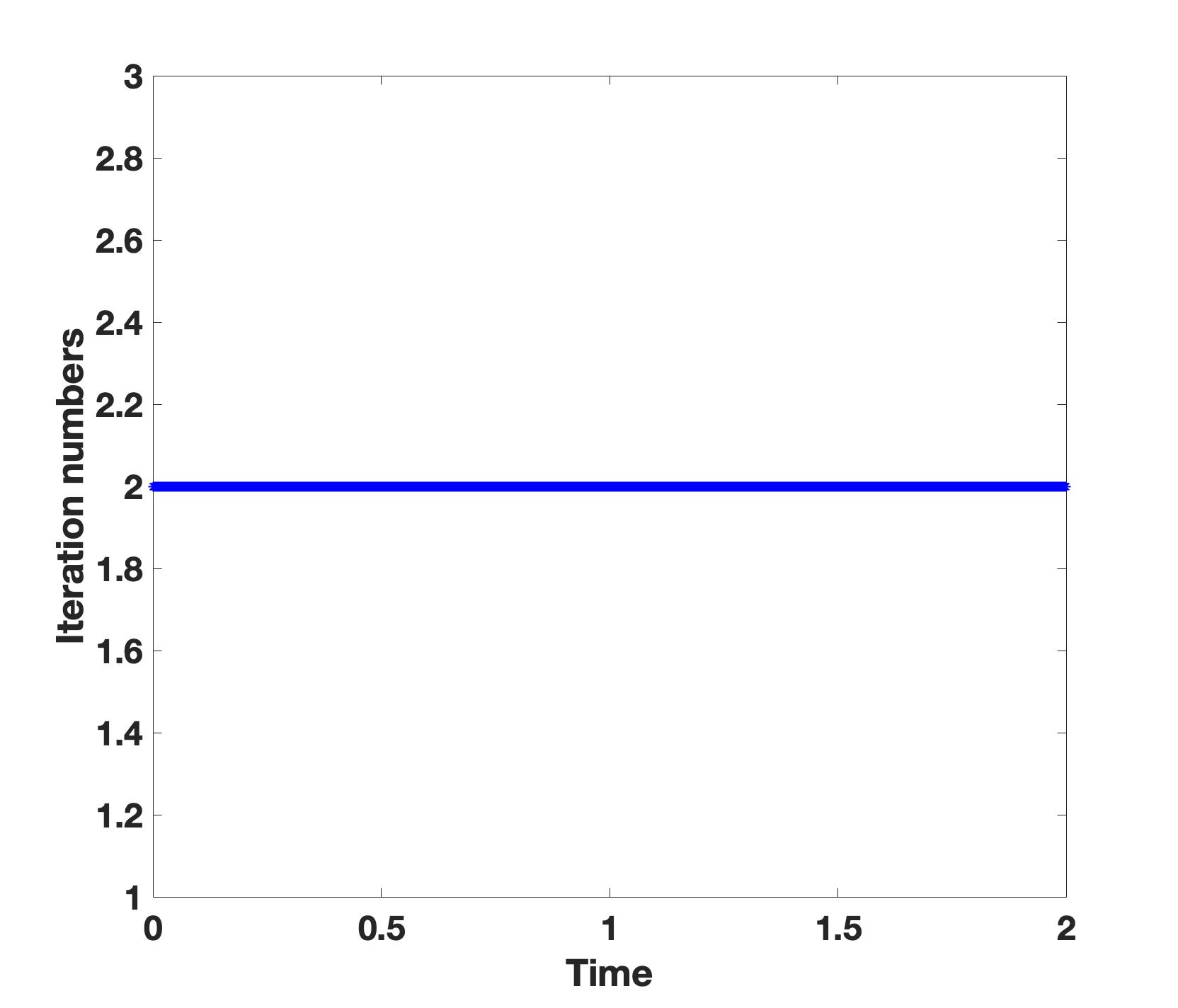}}
\caption{(a): $L^2$ error by second-order positivity schemes with mass conservation and without mass conservation. (b): Evolution of mass with respective to time. (c): Lagrange multiplier $\xi$ for mass conservation. (d): Iteration numbers of solving $F(\xi)=0$.}\label{mass_error_compare}
\end{figure}

\subsection{Poisson-Nernst-Planck equations}
We consider  the followng Poisson-Nernst-Planck  (PNP) system \cite{hu2020fully,he2019positivity} which describes the dynamics of ion transport in ion channels: 
\begin{eqnarray}
&&-\eps^2\Delta \phi = p-n, \mbox{in} ~ \Omega_T:=(0,T]\times \Omega. \label{PNP:1}\\
&&p_t = \Grad\cdot(\Grad p+p\Grad \phi),\label{PNP:2}\\
&&n_t=\Grad\cdot(\Grad n-n\Grad\phi),\label{PNP:3}
\end{eqnarray}
with initial conditions
\begin{equation}
p(0,x)=p_0(x)\geq 0,\quad n(0,x)=n_0(x) \geq 0, \quad \mbox{in} ~ \Omega,
\end{equation}
and homogeneous Neumann boundary conditions
\begin{equation}\label{Nueman}
\frac{\partial p}{\partial \bn}=\frac{\partial n}{\partial \bn}=\frac{\partial \phi}{\partial \bn}=0, \quad \mbox{on} \quad \partial \Omega_T:= (0,T]\times \partial \Omega.
\end{equation}
In the above,  $p$ and $n$ are concentration of positive and negative ions with valence $+1$ and $-1$, respectively, $\phi$ is the electrical potential, $\eps$ is a small positive dimensionless number representing the ratio of the Debye length to the physical characteristic length. The unknown functions $p$ and $n$ have to be positive for the problem to be well posed. Below we use the general approach presented in the last section to construct a positivity preserving scheme for  the PNP equations. Since we need to keep both $p$ and $n$ positive, two Lagrange multipliers $\lambda$ and $\eta$ are needed . Lagrange multipliers $\xi$ and $\gamma$ are used to preserve mass.


We set $X_h=P_N\times P_N$, and  $ \Sigma_h=\{(x_i,x_j), \,1\le i,j\le N-1\}$, where $\{x_k\}_{k=0}^N$ are the roots of $(1-x^2)L'_N(x)$ with $L_N$ being the Legendre polynomial of $N$-th degree. 
And we use the Legendre-Galerkin method with numerical integration in space \cite{shen2011spectral}. Then a second-order positivity preserving scheme based on the scheme \eqref{mass:por:lag:1}-\eqref{mass:por:lag:2c} with $k=2$ is as follows: for $\forall q_h, m_h \in X_h$
\begin{eqnarray}
&&[\frac{3\tilde{p}_h^{n+1}-4p_h^n+p_h^{n-1}}{2\delta t},q_h] =[\Grad p_h^{n+1}+p_h^{n+1,\star}\Grad \phi_h^{n+1,\star},\Grad q_h]+[\lambda_h^n+\xi_h^n,q_h],\label{scheme:PNP:2}\\
&&\frac{3p_h^{n+1}-\tilde{p}_h^{n+1}}{2\delta t}=\lambda_h^{n+1}-\lambda_h^n+\xi_h^{n+1}-\xi_h^n,\label{scheme:PNP:3}\\
&& \lambda_h^{n+1} \ge 0,\; p_h^{n+1}\ge,\; \lambda_h^{n+1} p_h^{n+1}=0,\;[p_h^{n+1},1]=[p_h^n,1];\label{scheme:PNP:4}\\
&&[\frac{3\tilde{n}_h^{n+1}-4n_h^n+n_h^{n-1}}{2\delta t},m_h]= [\Grad n_h^{n+1}-n_h^{n+1,\star}\Grad\phi_h^{n+1,\star},\Grad m_h]+[\eta_h^n+\gamma_h^n,m_h],\label{scheme:PNP:5}\\
&&\frac{3n_h^{n+1}-3\tilde{n}_h^{n+1}}{2\delta t}=\eta_h^{n+1}-\eta_h^n+\gamma_h^{n+1}-\gamma_h^n,\\
&& \eta_h^{n+1} \ge 0,\; n_h^{n+1}\ge 0,\; \eta_h^{n+1} n_h^{n+1}=0,\;[n_h^{n+1},1]=[n_h^n,1];\label{scheme:PNP:6}\\
&&\eps^2[\Grad \phi_h^{n+1},\Grad\psi_h ]=[ p_h^{n+1}-n_h^{n+1},\psi_h], \quad \forall \psi_h \in X_h;\label{scheme:PNP:1}
\end{eqnarray}
where $p_h^{n+1,\star}=2p_h^n-p_h^{n-1}$ and $n_h^{n+1,\star}=2n_h^n-n_h^{n-1}$. In the above,  $p^{n+1}_h$ and $n_h^{n+1}$ are decoupled and can be determined from \eqref{scheme:PNP:2}-\eqref{scheme:PNP:4} and  \eqref{scheme:PNP:5}-\eqref{scheme:PNP:6} respectively.  Once $p^{n+1}_h$ and $n_h^{n+1}$ are known,  $\phi^{n+1}_h$ can be obtained from \eqref{scheme:PNP:1}. Hence, the scheme is very efficient.

We set $\Omega=(-1,1)^2$, $\eps=0.1$, and  use  $\delta t=10^{-3}$,   $N=256$ in the above scheme with the initial conditions:
\begin{equation}\label{initial:pnp}
 \begin{split}
  &p(x,y,0),n(x,y,0) =\begin{cases}1,& x^2+y^2 \leq 0.25,\\
  0,& \mbox{otherwise}\end{cases}.\\
&  \phi(x,y,0)=\begin{cases} (x-0.5)^2(y-0.5)^2,& x^2+y^2 \leq 0.25,\\
  0,& \mbox{otherwise}\end{cases}.
  \end{split}
  \end{equation}
 The numerical solution at different times are  plotted in  Fig,\;\ref{pnp:positivity}. We
 observe that  $p$ and $n$ are always non-negative. We also plot the Lagrange multipliers $\lambda$ and $\eta$ in Fig.\;\ref{pnp:lambda} at time $t=3\times 10^{-3}$. Since the solutions of the  PNP system  are smooth, the   Lagrange multipliers are zero at most places, and are  non-zero only  at some localized boundary with quite small values.

\begin{figure}[htbp]
\centering
\subfigure[$p_h:t=0.01$]{
\includegraphics[width=0.30\textwidth,clip==]{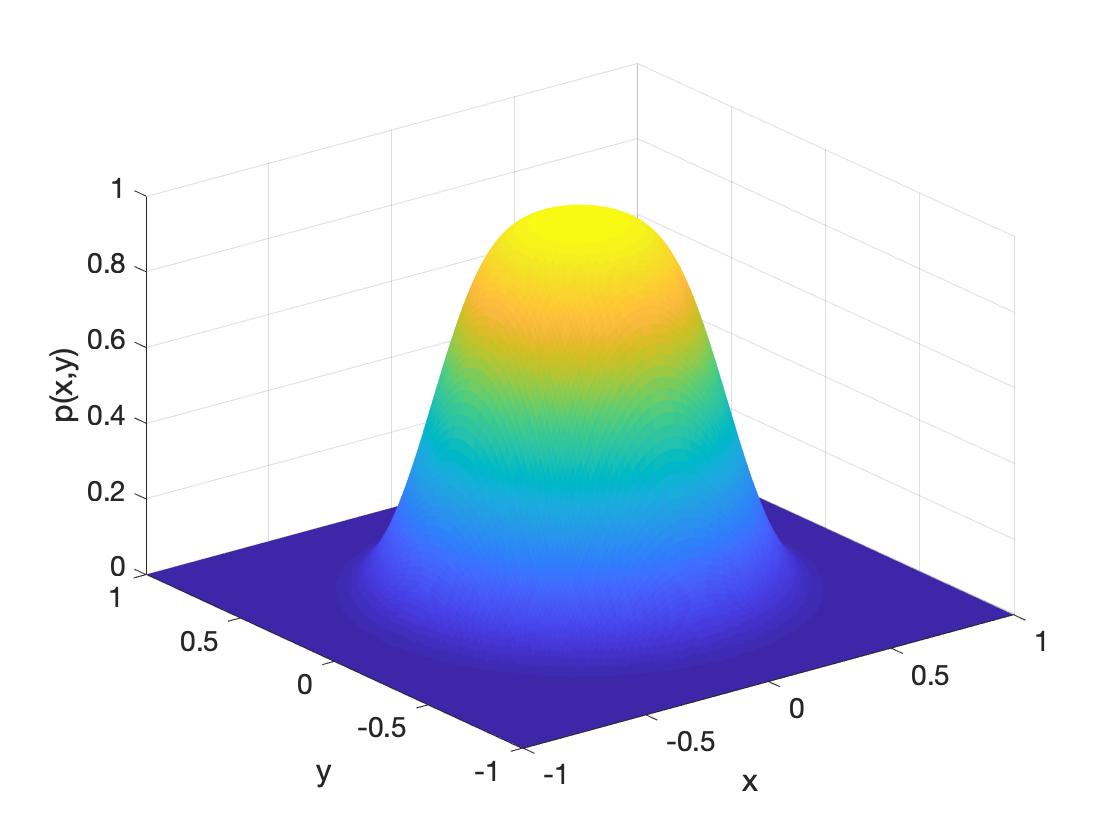}}
\subfigure[$p_h:t:=0.1$]{
\includegraphics[width=0.30\textwidth,clip==]{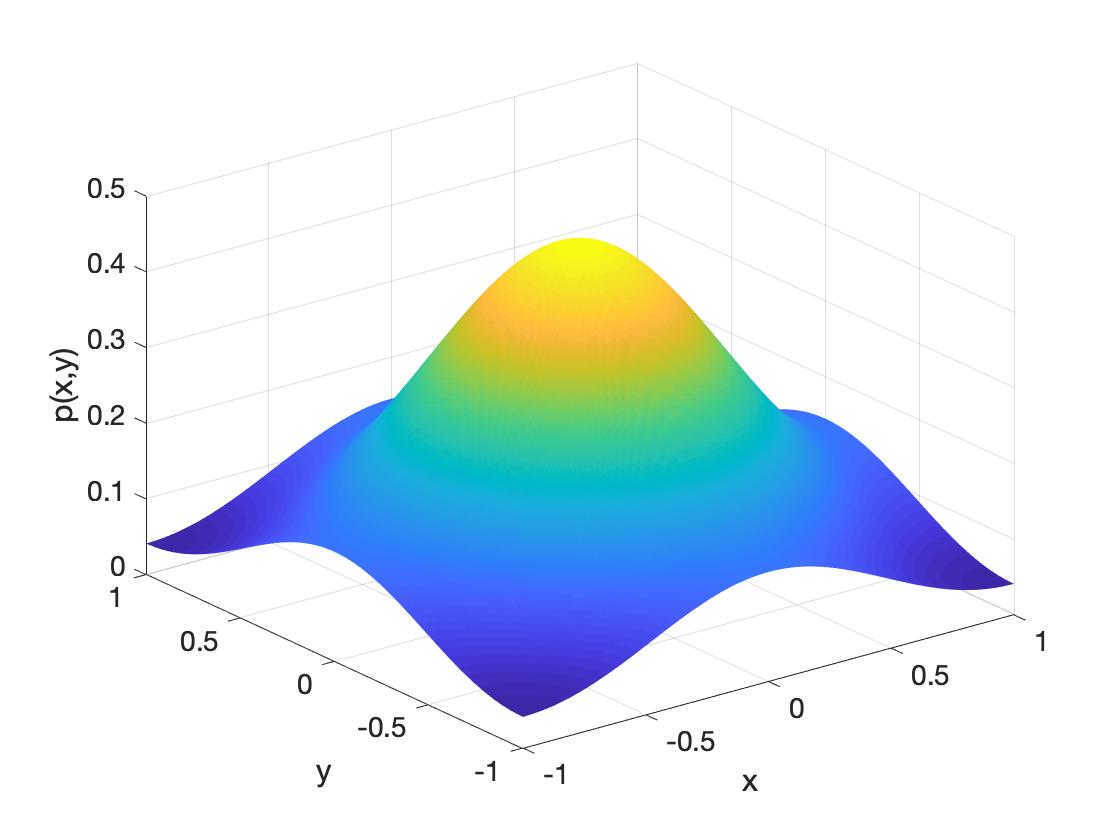}}
\subfigure[$p_h:t=0.5$]{
\includegraphics[width=0.30\textwidth,clip==]{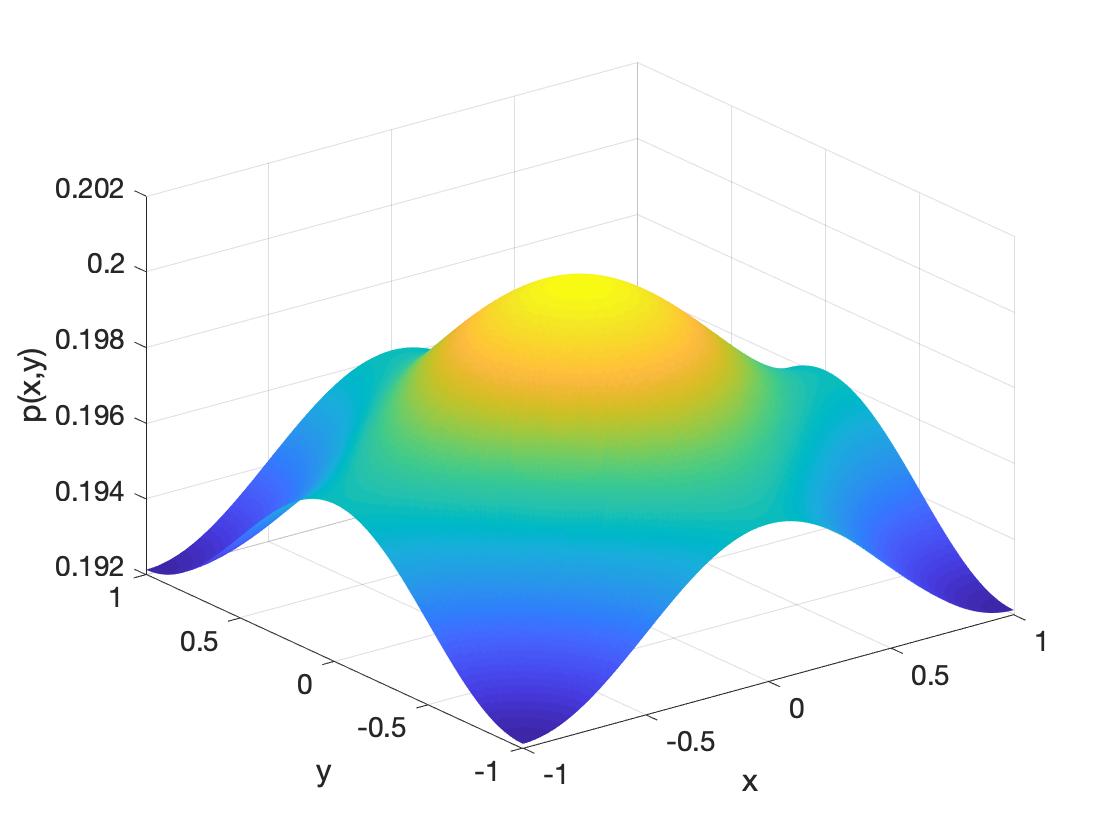}}
\subfigure[$p_h:t=1$]{
\includegraphics[width=0.30\textwidth,clip==]{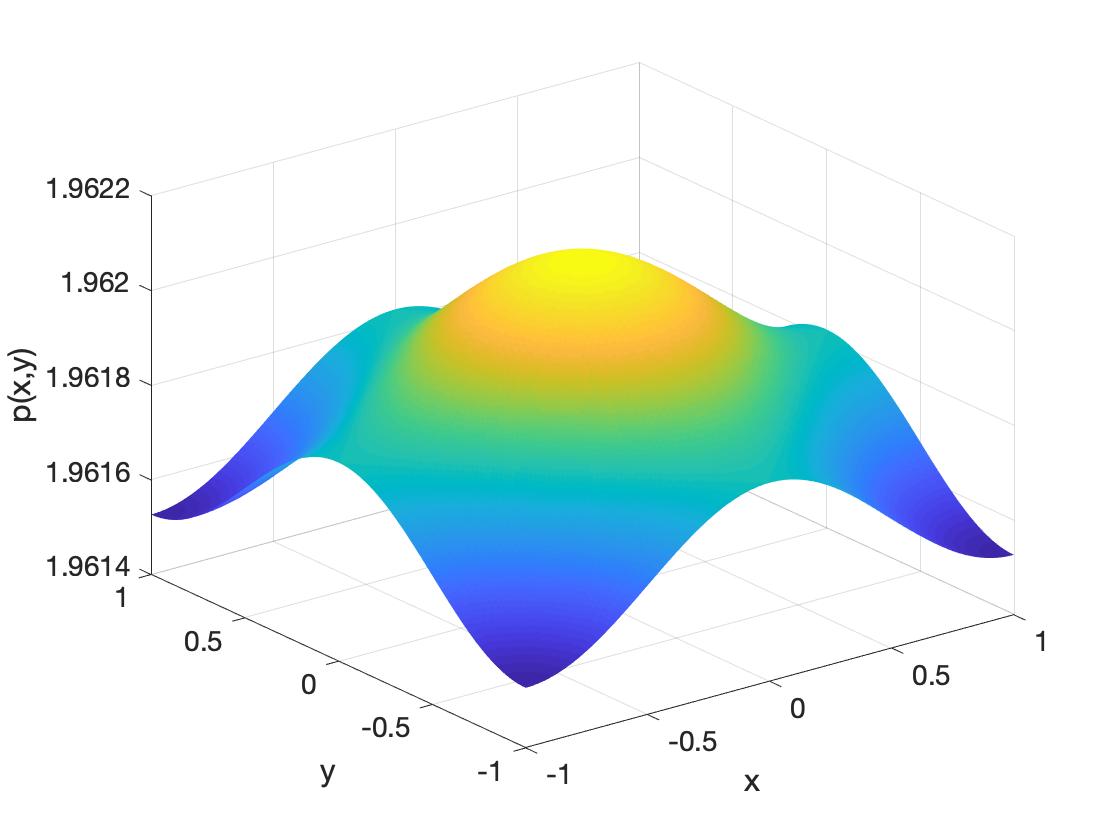}}
\subfigure[$n_h:t=0.01$]{
\includegraphics[width=0.30\textwidth,clip==]{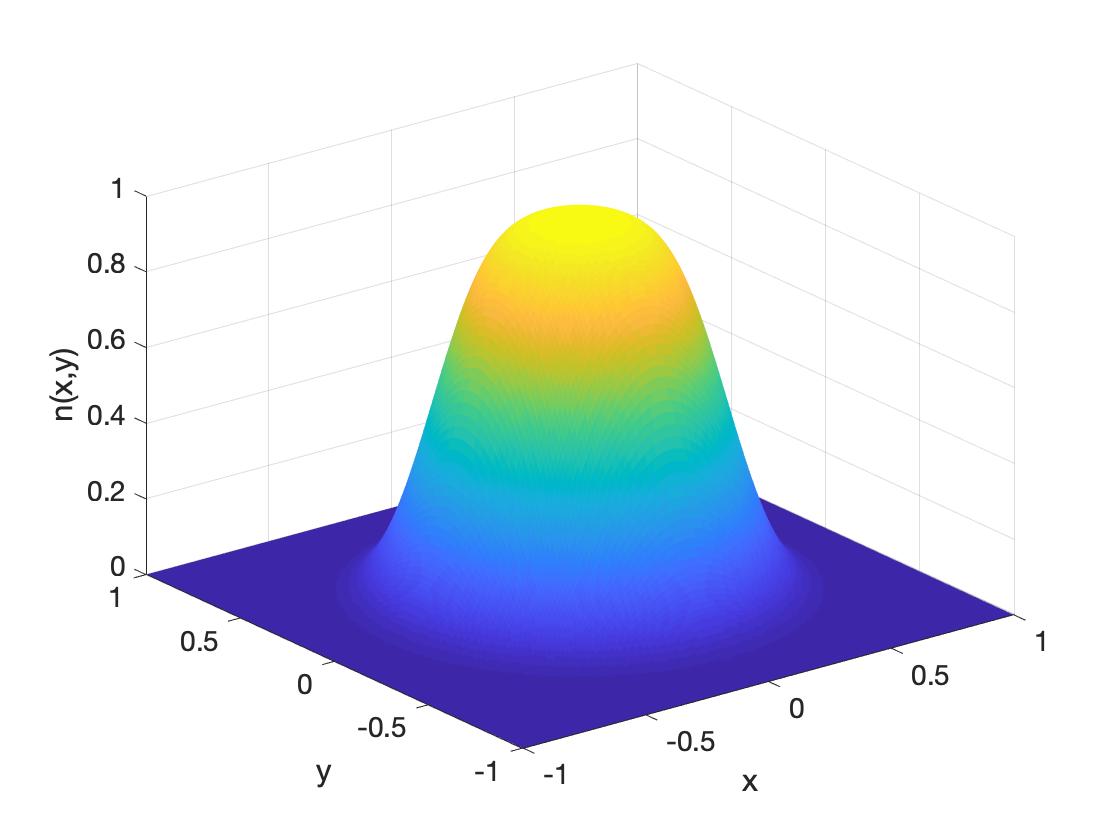}}
\subfigure[$n_h:t:=0.1$]{
\includegraphics[width=0.30\textwidth,clip==]{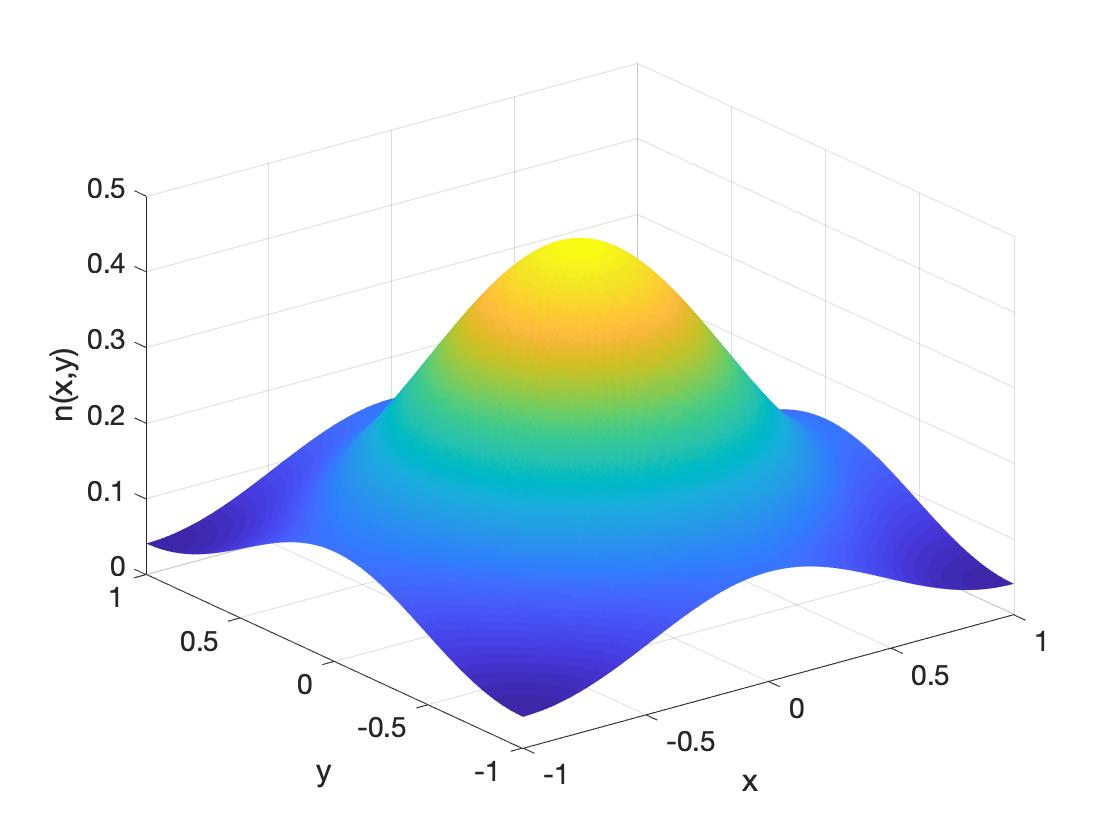}}
\subfigure[$n_h:t=0.5$]{
\includegraphics[width=0.30\textwidth,clip==]{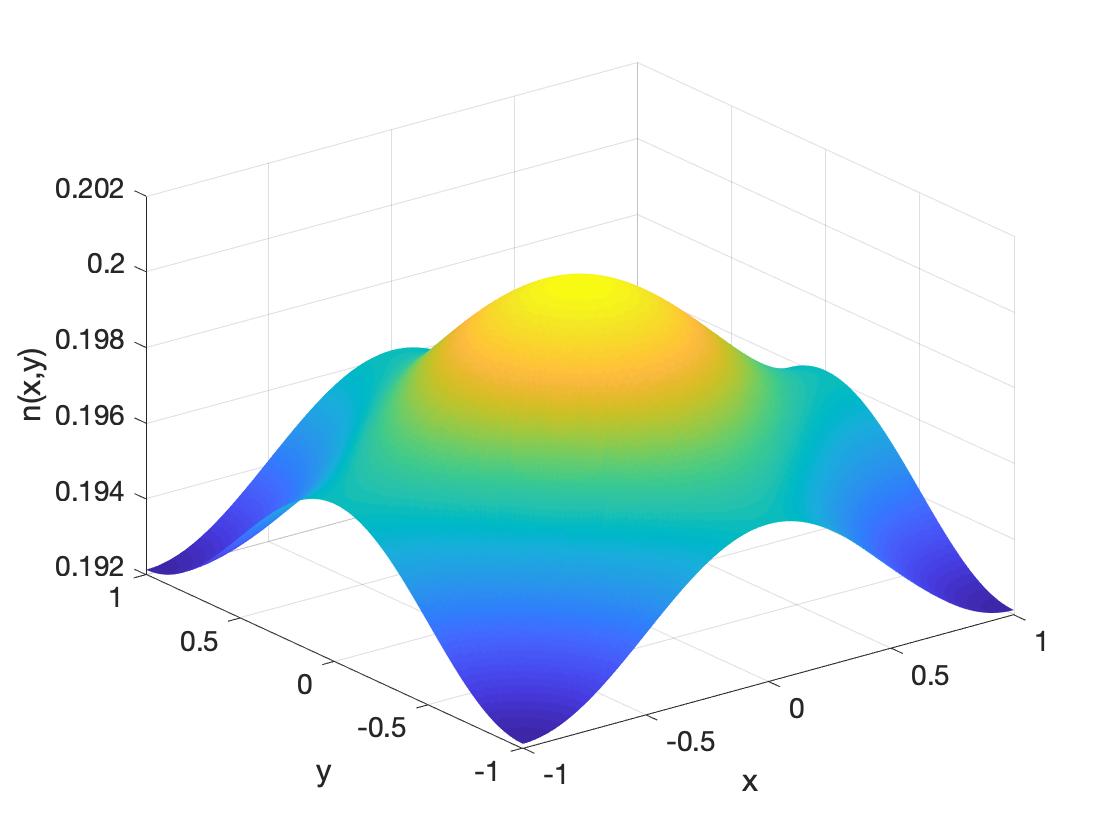}}
\subfigure[$n_h:t=1$]{
\includegraphics[width=0.30\textwidth,clip==]{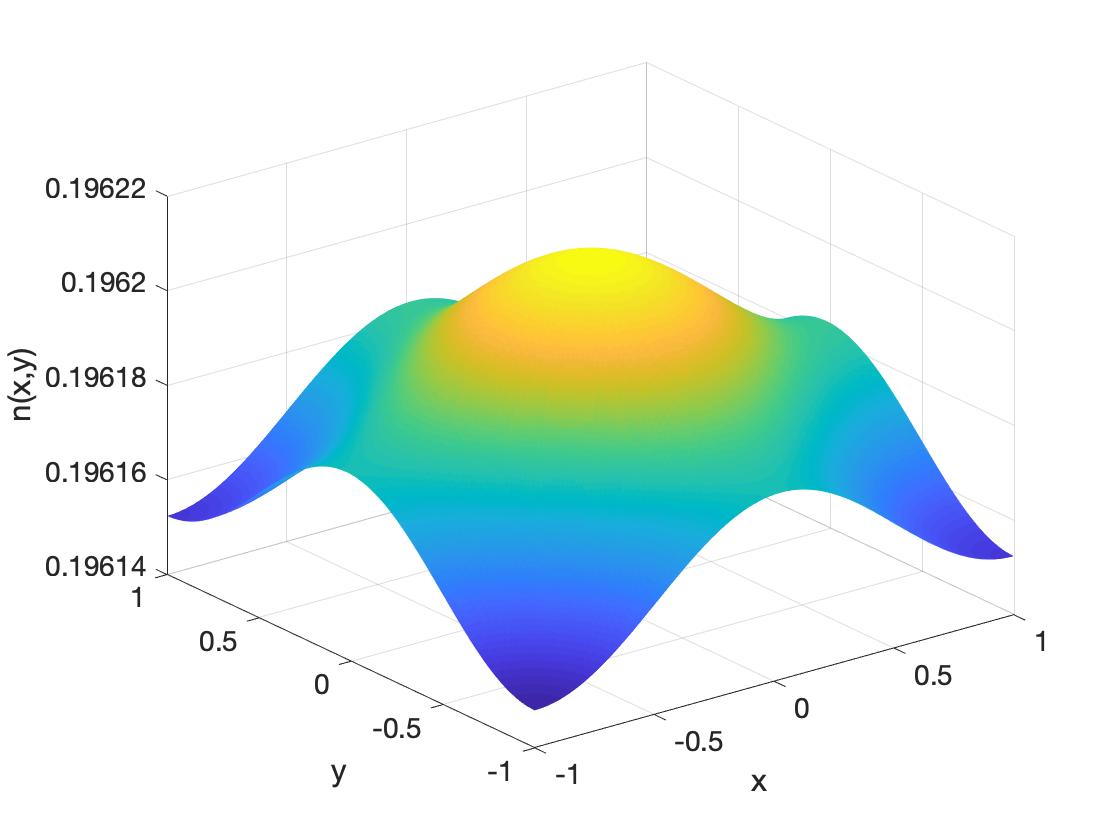}}
\subfigure[$\phi_h:t=0$]{
\includegraphics[width=0.30\textwidth,clip==]{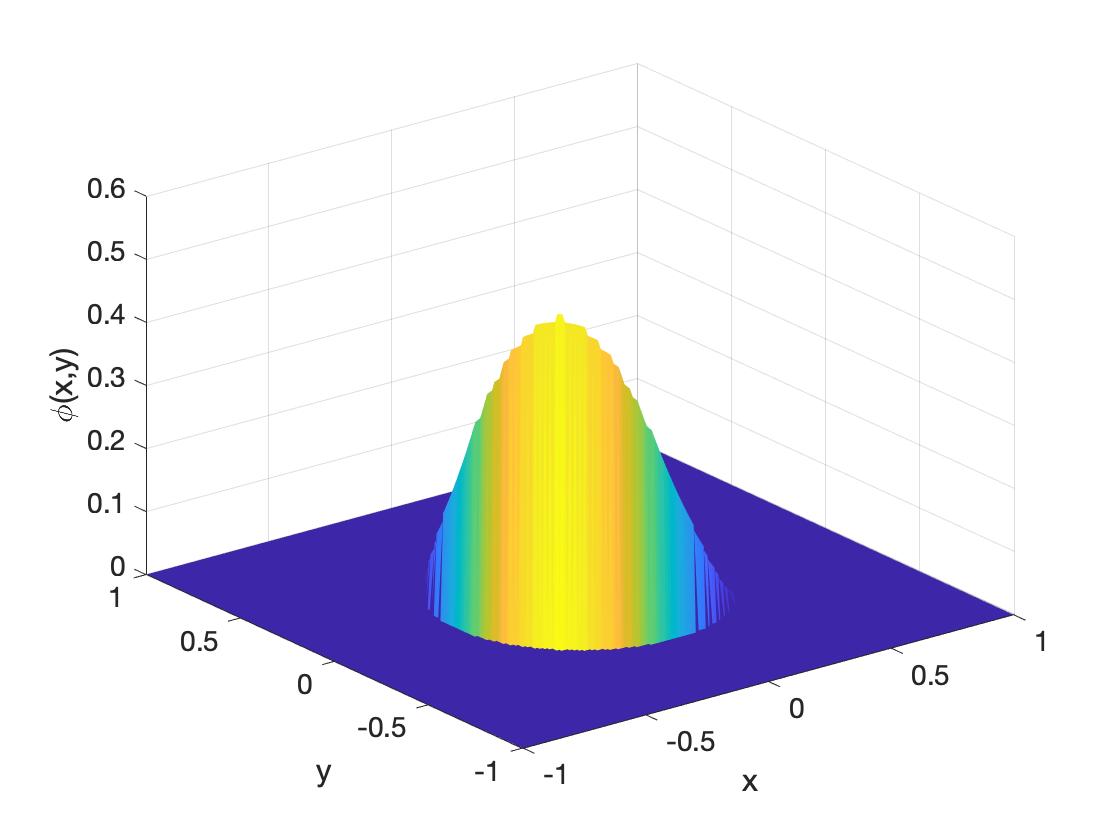}}
\subfigure[$\phi_h:t:=0.01$]{
\includegraphics[width=0.30\textwidth,clip==]{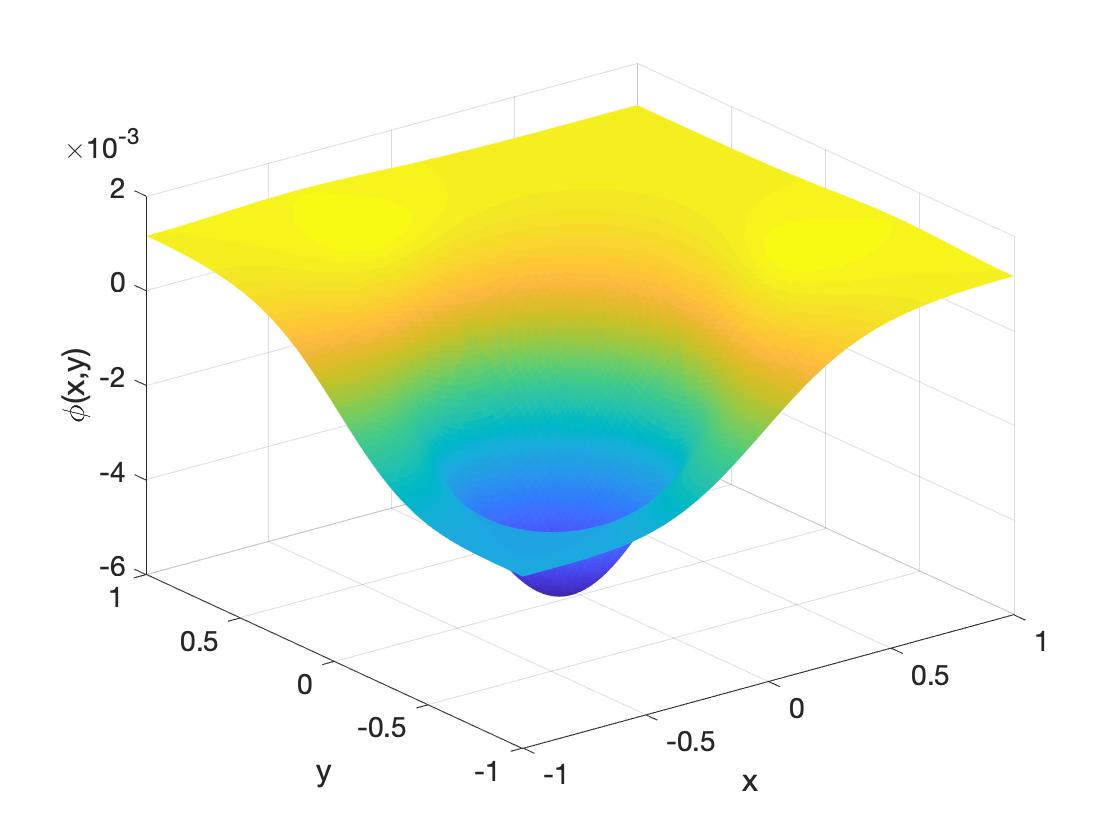}}
\subfigure[$\phi_h:t=0.1$]{
\includegraphics[width=0.30\textwidth,clip==]{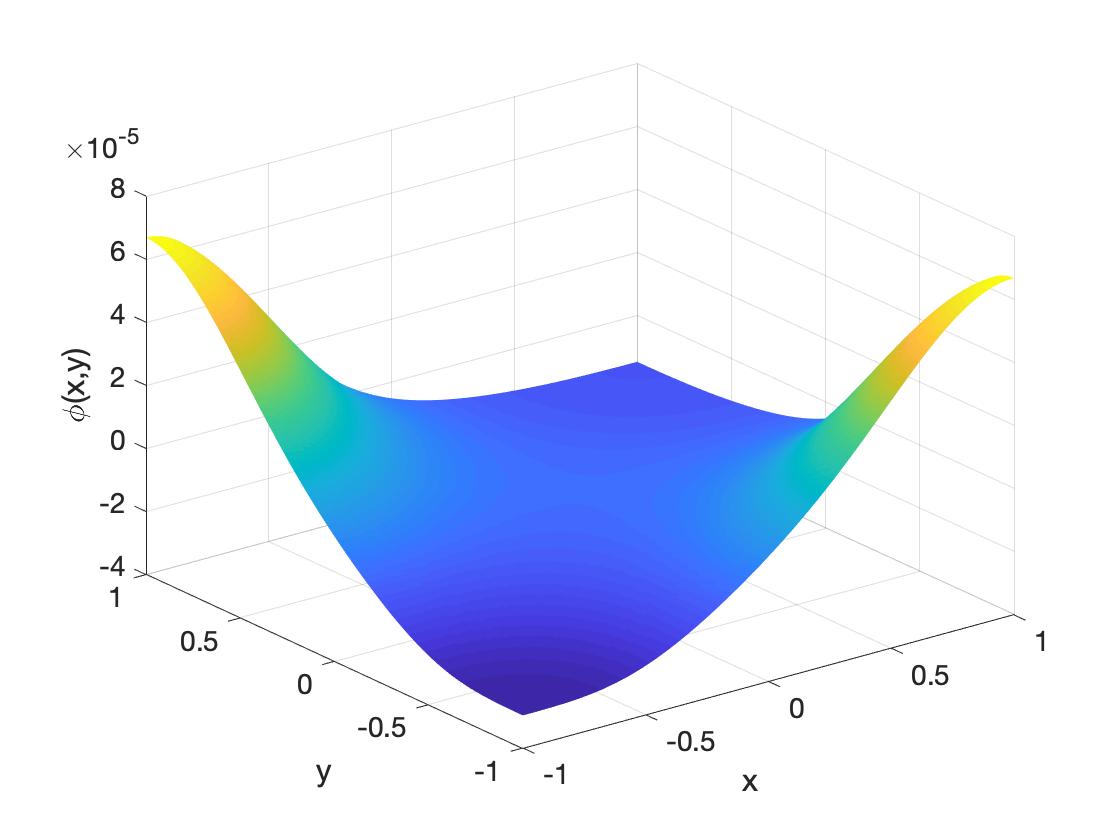}}
\subfigure[$\phi_h:t=1$]{
\includegraphics[width=0.30\textwidth,clip==]{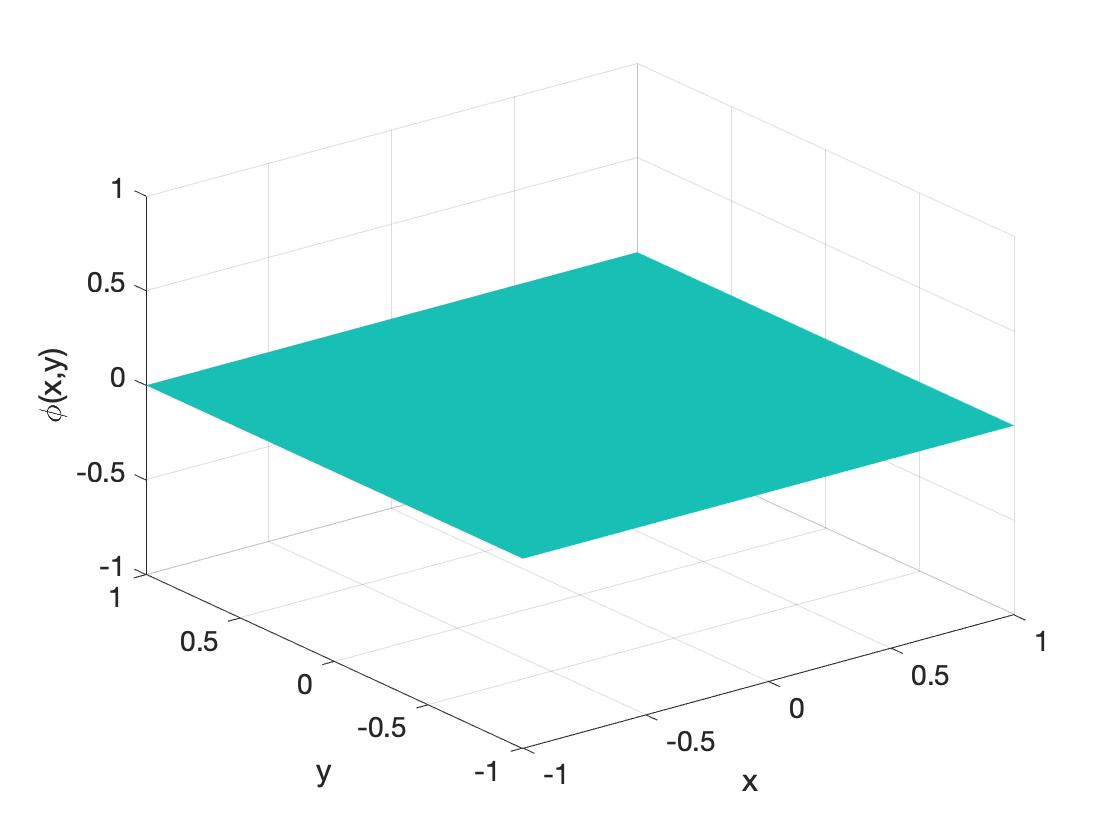}}
\caption{The numerical solution of  PNP equation \eqref{PNP:1}-\eqref{PNP:3} with time step $1\times 10^{-3}$ and $N=256$  in 2D by using scheme \eqref{scheme:PNP:2}-\eqref{scheme:PNP:1}.  Parameter $\eps=0.1$. }\label{pnp:positivity}
\end{figure}

\begin{figure}[htbp]
\centering
\subfigure[$\lambda_h$]{
\includegraphics[width=0.30\textwidth,clip==]{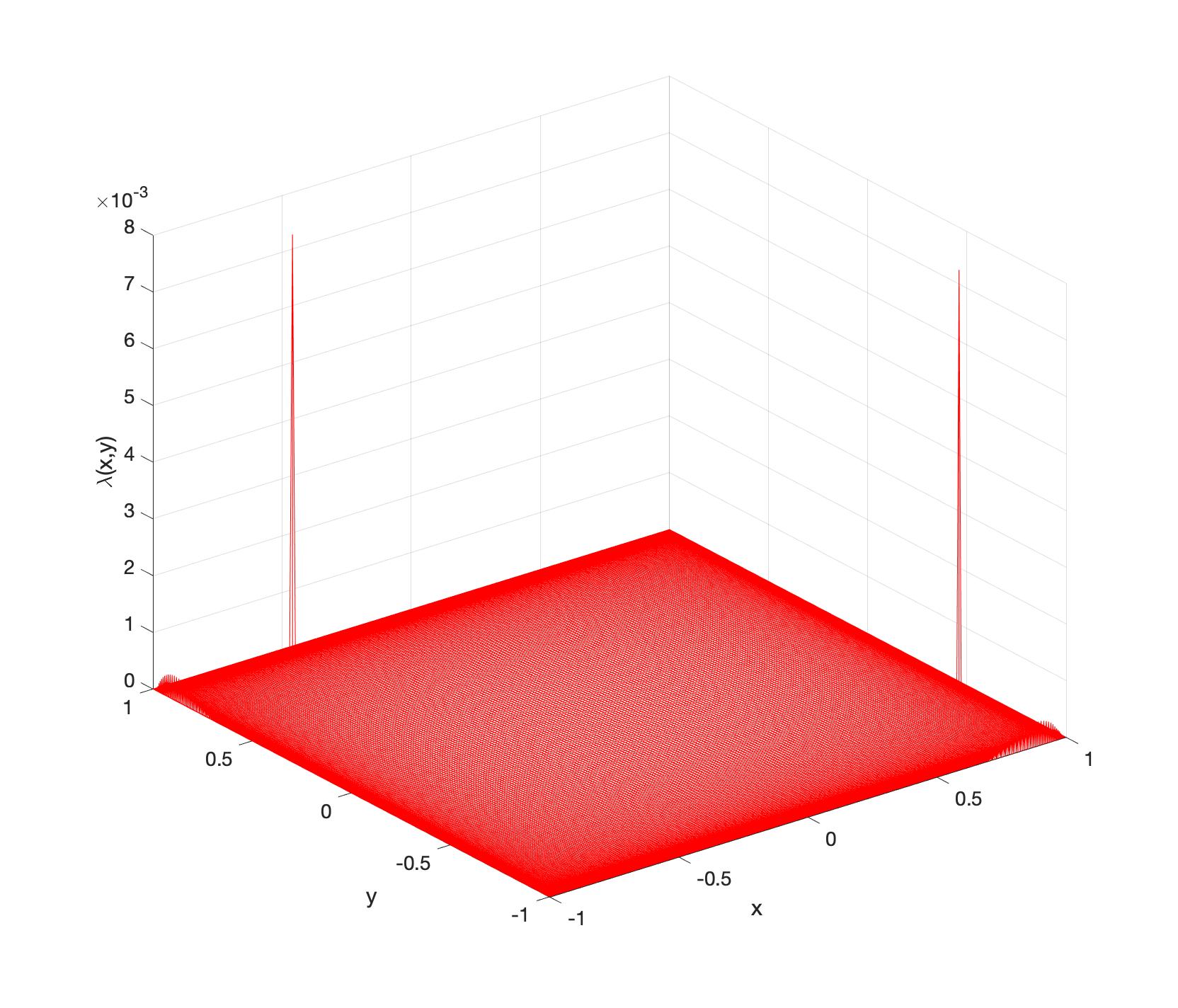}}
\subfigure[$\eta_h$]{
\includegraphics[width=0.30\textwidth,clip==]{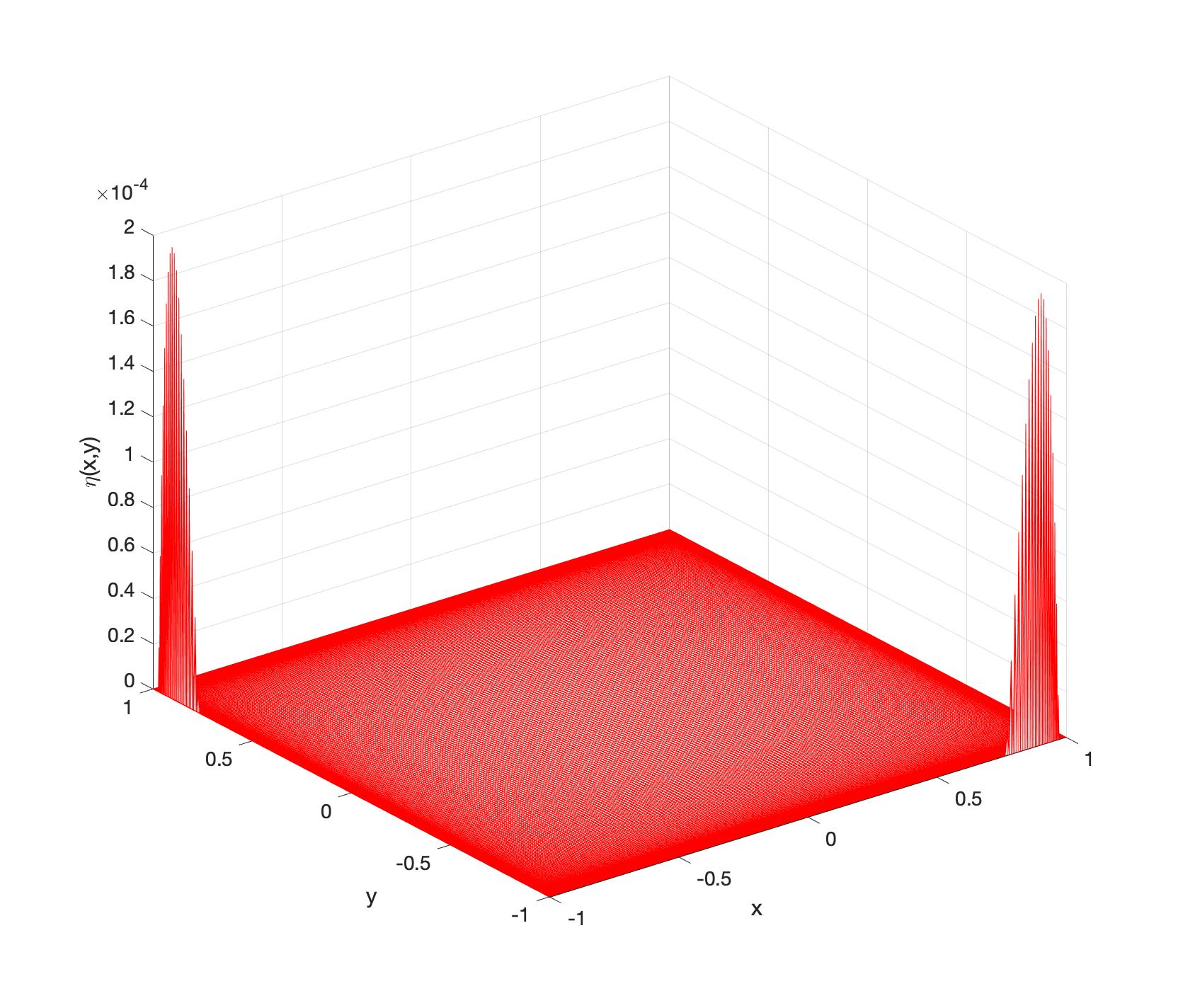}}
\caption{The Lagrange multipliers  of  PNP equation \eqref{PNP:1}-\eqref{PNP:3}  $\lambda$ and  $\eta$ at time $t=3\times 10^{-3}$ for numerical simulations at Fig.\;\ref{pnp:positivity}. }\label{pnp:lambda}
\end{figure}

\subsection{Lubrication-type equation}
As the last example, we consider the following lubrication equation \cite{zhornitskaya1999positivity}
\begin{equation}\label{Lu:eq}
 u_t +\Grad\cdot(f( u)\Grad\Delta  u)=0,
\end{equation}
where $f( u) \approx  u^\rho$ as $ u \rightarrow 0$ with $\rho$ depending on the boundary condition at the liquid solid interface: $\rho=3$ for no-slip boundary condition while $0<\rho< 3$ for various other boundary condition.  The above equation has been used, e.g.,  in the study of thin liquid films and fluid interfaces by surface tension \cite{bertozzi1996lubrication}.

Below, we consider \eqref{Lu:eq} in $\Omega=(-1,1)$ and  $\Omega=(-\pi,\pi)^2$  with periodic boundary conditions, and use a Fourier collocation method in space. Since the equation \eqref{Lu:eq} may develop a singularity in finite time, it is a common practice to regularize it \cite{bertozzi1996lubrication,zhornitskaya1999positivity}. In \cite{bertozzi1996lubrication}, the equation is regularized by replacing $f(u)$ with 
$f_\eta(u)=\frac{u^4f(u)}{\eta f(u)+u^4}$ and it is shown in \cite{bertozzi1996lubrication} that the regularized problem is well posed for all time. On the other hand, one can also   regularize the equation by requiring the solution to be bounded away from zero, namely, $u(\bz) \ge \eps$ for a prescribed $\eps$.

Hence, a second-order scheme 
 based on \eqref{mass:por:lag:1}-\eqref{mass:por:lag:2c} with $k=2$ is as follows:
\begin{equation}\label{scheme:Lu:eq:LM:1d}
\frac{3\tilde{ u}_h^{n+1}(\bz)-4 u_h^n(\bz)+ u_h^{n-1}(\bz)}{2\delta t}+ \mathcal{L}_h^n \tilde{ u}_h^{n+1} (\bz)=\lambda_h^n(\bz)+\xi_h^n,\quad\forall \bz\in \Sigma_h,
\end{equation}
and 
\begin{equation}\label{scheme:Lu:eq:LM:1db}
\begin{split}
&\frac{3 u_h^{n+1}(\bz)-3\tilde{ u}_h^{n+1}(\bz)}{2\delta t}=\lambda_h^{n+1}(\bz)-\lambda_h^n(\bz)+\xi_h^{n+1}-\xi_h^n,\quad\forall  \bz\in \Sigma_h,\\
& u_h^{n+1}(\bz) \ge \eps,\;\lambda_h^{n+1}(\bz) \ge0,\;\lambda_h^{n+1} (u^{n+1}(\bz)-\eps)=0,\quad\forall  \bz\in \Sigma_h,\\
& [u_h^{n+1},1]=[u_h^n,1],
\end{split}
\end{equation}
where $\mathcal{L}_h^n $ is defined as follows depending on the type of regularization:
\begin{itemize}
\item $\eps=0$ and $\mathcal{L}_h^n \tilde u_h^{n+1}=\nabla\cdot (f_\eta( u_h^{n+1,*}) \Grad\Delta \tilde u_h^{n+1})$ with $u_h^{n+1,*}$ defined in \eqref{ustar}.
\item $\eps>0$ and $\mathcal{L}_h^n \tilde u_h^{n+1}=\nabla\cdot (f( u_h^{n+1,*}) \Grad\Delta \tilde u_h^{n+1})$ with $u_h^{n+1,*}$ defined in \eqref{ustar}.
\end{itemize}

The second step \eqref{scheme:Lu:eq:LM:1db} can be implemented as 
\begin{equation}
 ( u_h^{n+1},\lambda_h^{n+1})=\left\{
\begin{array}{rcl}
(\tilde{ u}_h^{n+1}-\frac 23\delta t(\lambda_h^n-\xi_h^{n+1}+\xi_h^n),0)       &      & {\mbox{if} \quad \eps      \leq  \tilde{ u}_h^{n+1}-\frac 23\delta t(\lambda_h^n-\xi_h^{n+1}+\xi_h^n) },\\
(\eps,\lambda_h^n+\xi_h^n-\xi_h^{n+1}+\frac{3}{2\delta t}(\eps-\tilde{ u}_h^{n+1}))    &      & \text{otherwise.}\\
\end{array} \right.
\end{equation}

We consider first  the one-dimensional case with  $f( u)= u^{\frac 12}$ 
in $(-1,1)$ with  periodic boundary conditions and the initial condition
 \begin{equation}\label{fourth:init:1d}
  u_0(x)=0.8-\cos(\pi x)+0.25\cos(2\pi x).
 \end{equation}
 This example has been well studied in \cite{zhornitskaya1999positivity}, and  the original equation $f( u)= u^{\frac 12}$ will develop a singularity at $t\approx 0.00074$. However, with a regularization, the solution 
 can be continued beyond the singularity.

\begin{figure}[htbp]
\centering
\includegraphics[width=0.45\textwidth,clip==]{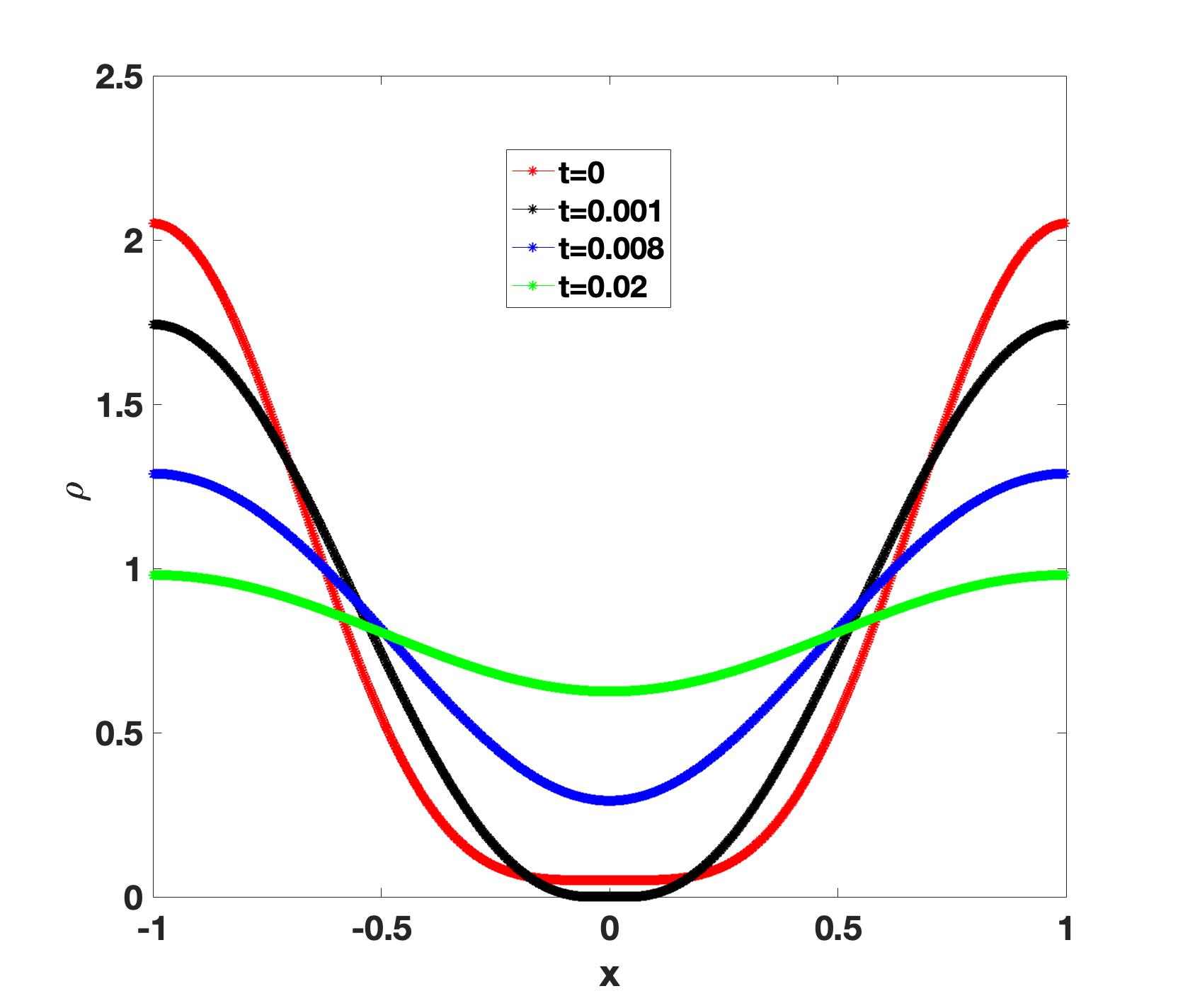}
\includegraphics[width=0.45\textwidth,clip==]{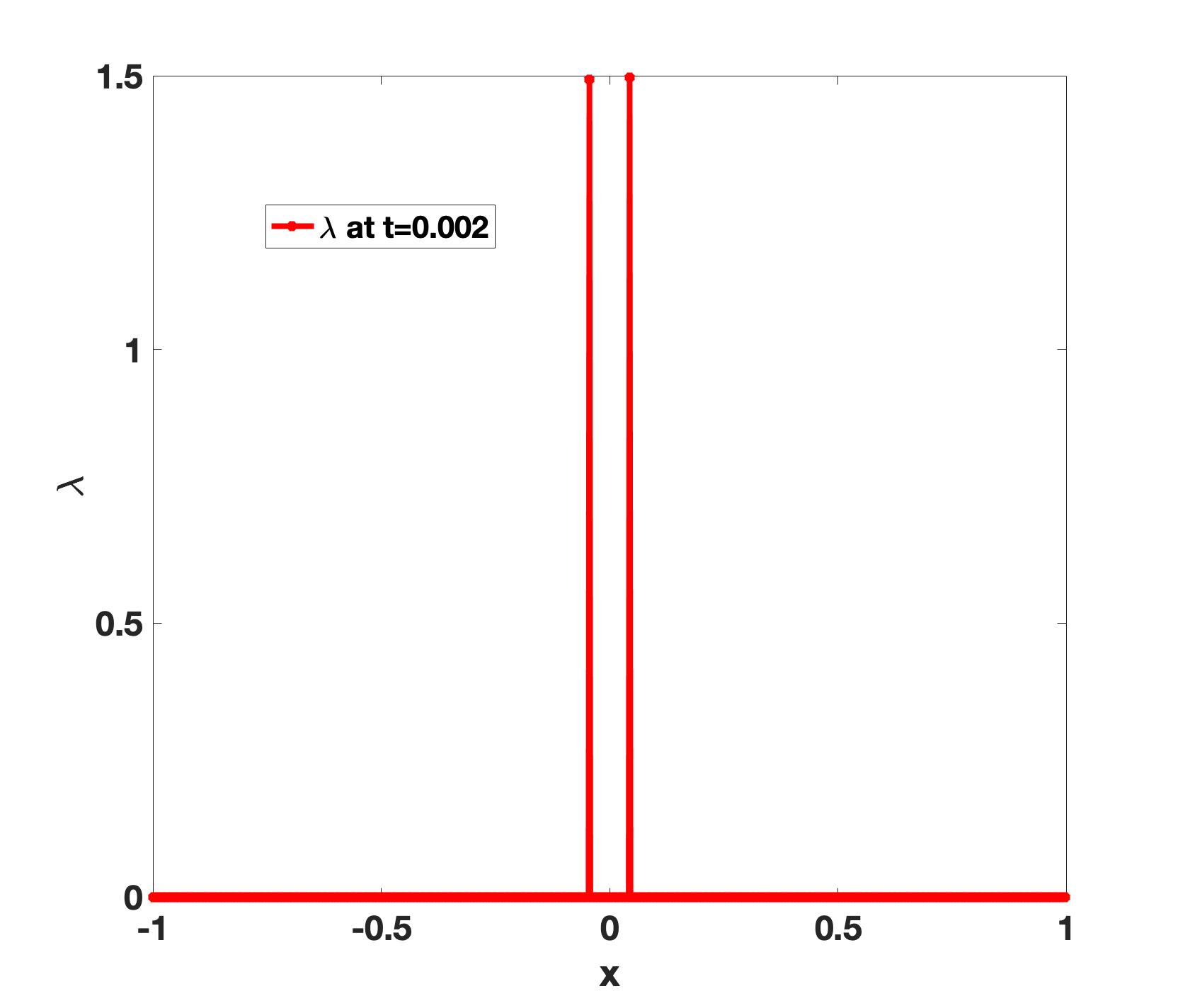}
\caption{Numerical solutions $u$ and Lagrange multiplier $\lambda$ of postivity preserving scheme computed with $\eta=10^{-12}$, $\eps=0$  and $\delta t=2\times 10^{-8}$.}\label{regular:numeric}
\end{figure}

In Fig.\;\ref{regular:numeric}, numerical solutions $u$ and Lagrange multiplier $\lambda$ are shown  at different times computed by regularized postivity preserving scheme  with $N=1000$,  $\eta=10^{-12}$, $\eps=0$  and time step $\delta t=2\times 10^{-8}$. Numerical solutions in Fig.\;\ref{regular:numeric} are indistinguishable from results computed with \eqref{scheme:Lu:eq:LM:1d}.

 In Fig.\;\ref{Lu:1d:numeric}(a-d), we plot the numerical solutions at different times computed with \eqref{scheme:Lu:eq:LM:1d} using $1000$ Fourier modes with various $\eps$ and  $\delta t$. 
 We observe that the numerical solutions are indistinguishable with $\eps$ ranging from $10^{-2}$ to $10^{-4}$. However, as we decrease $\eps$, smaller time steps has to be used.  The Lagrange multiplier $\lambda_h$  at time $t=0.001$ and $t=0.0008$ are plotted in In Fig.\;\ref{Lu:1d:numeric}(e-f). We  observe that $\lambda_h$ becomes large  near the places where the solution approaches zero.

\begin{figure}[htbp]
\centering
\subfigure[$\eps=10^{-3}$ and  $\delta t=10^{-6}$]{
\includegraphics[width=0.45\textwidth,clip==]{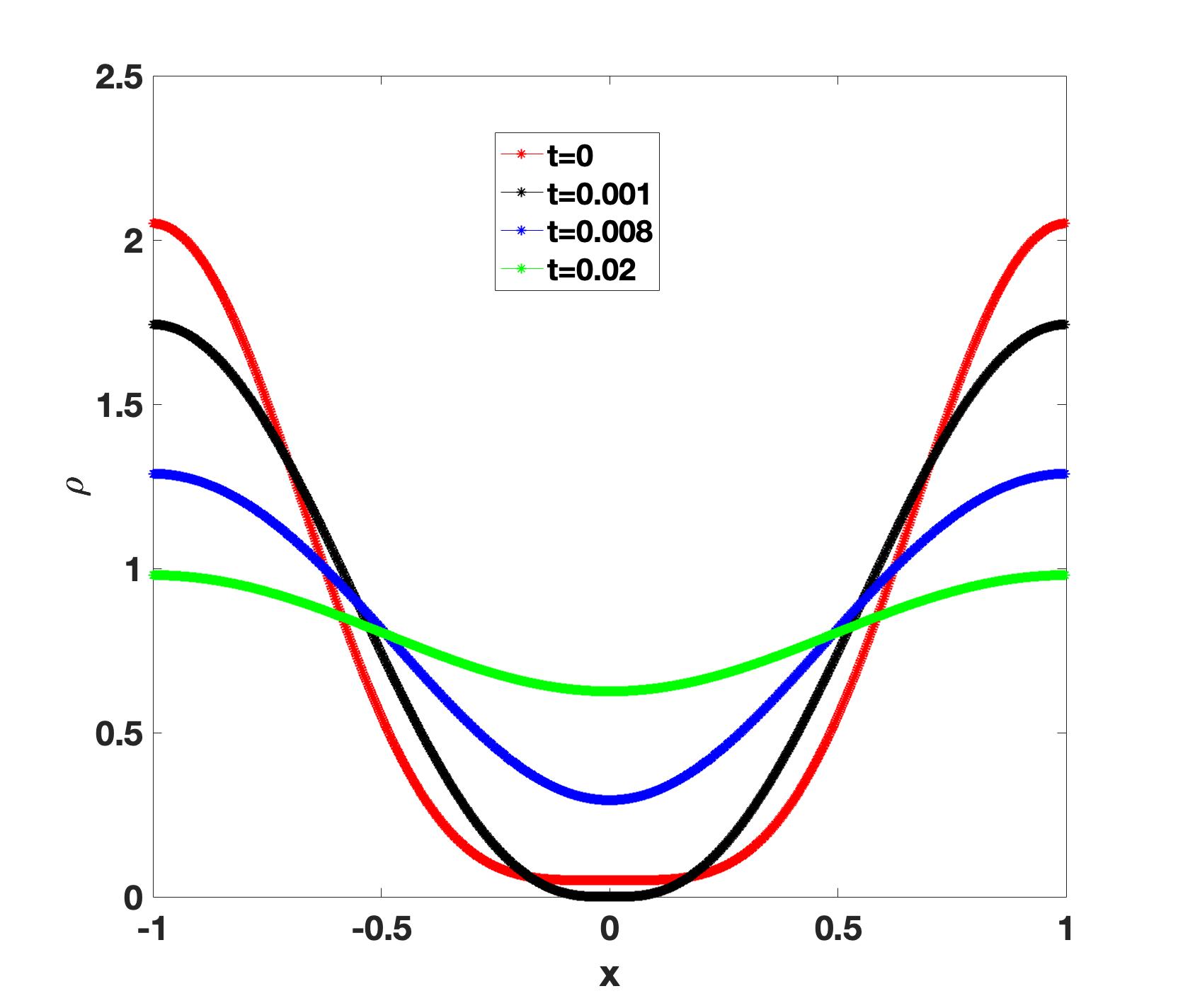}}
\subfigure[$\eps=10^{-4}$ and  $\delta t=2\times 10^{-7}$]{
\includegraphics[width=0.45\textwidth,clip==]{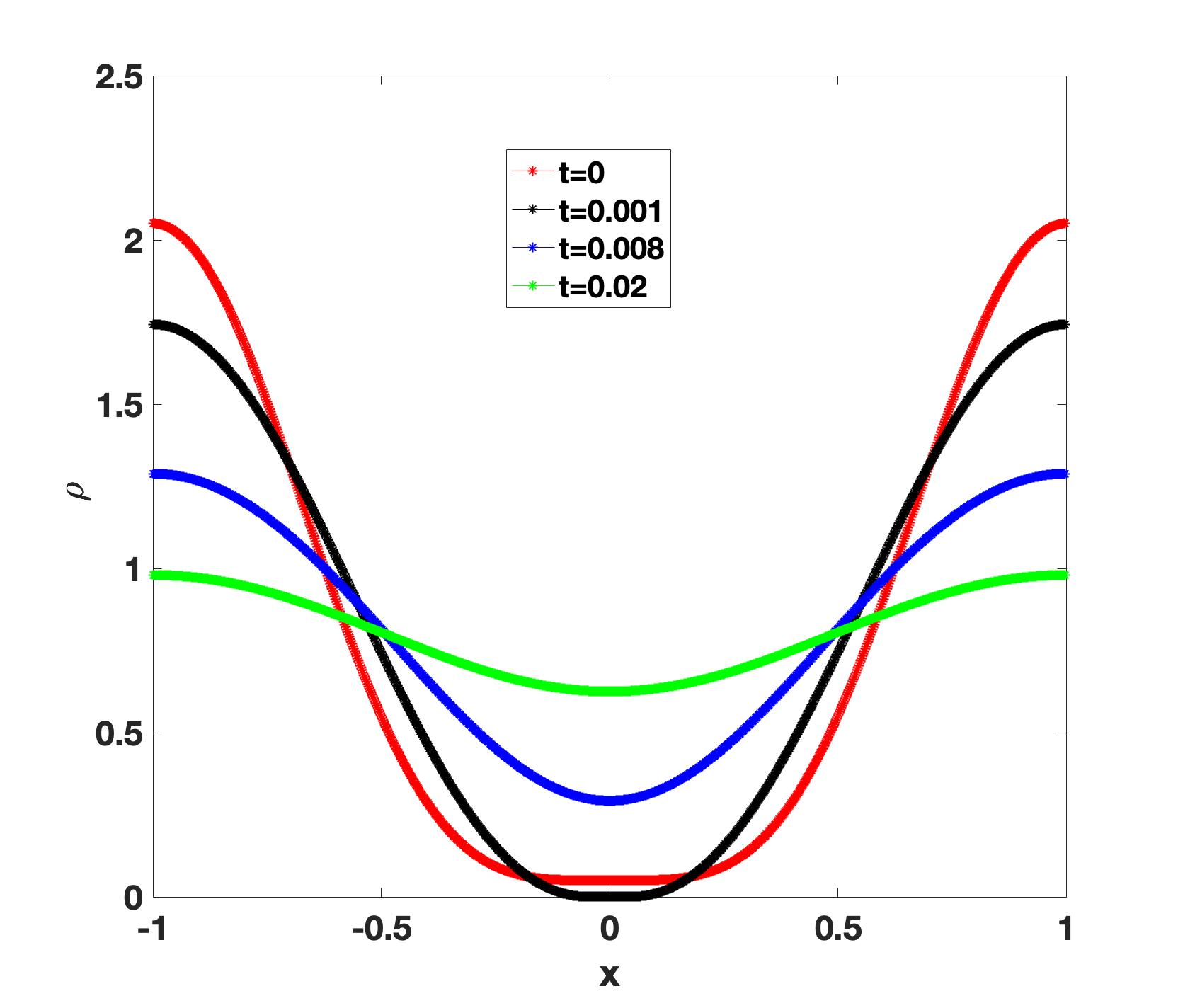}}
\subfigure[$\eps=10^{-2}$ and  $\delta t=10^{-5}$]{
\includegraphics[width=0.45\textwidth,clip==]{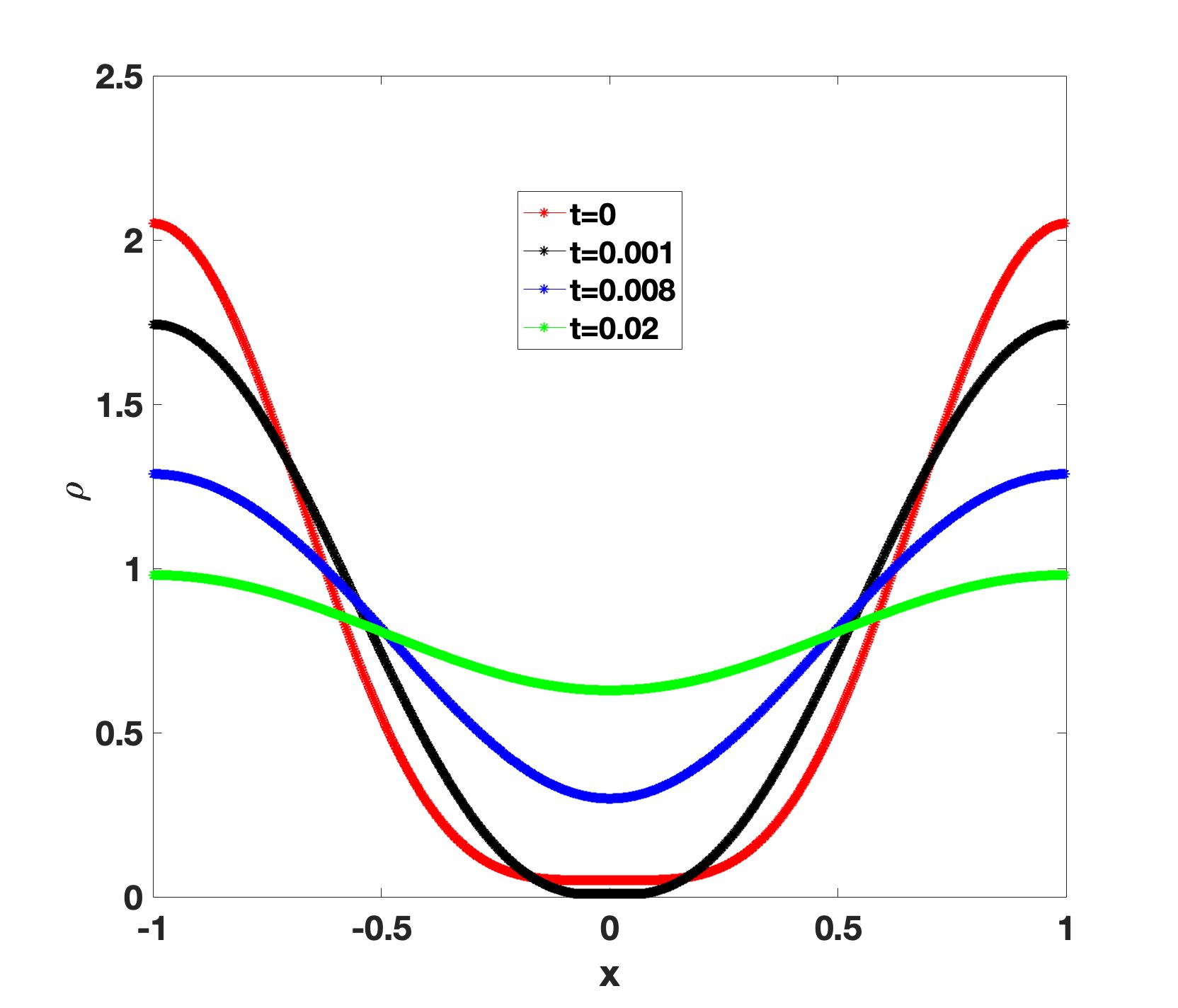}}
\subfigure[$\eps=10^{-2}$ and  $\delta t=10^{-4}$]{
\includegraphics[width=0.45\textwidth,clip==]{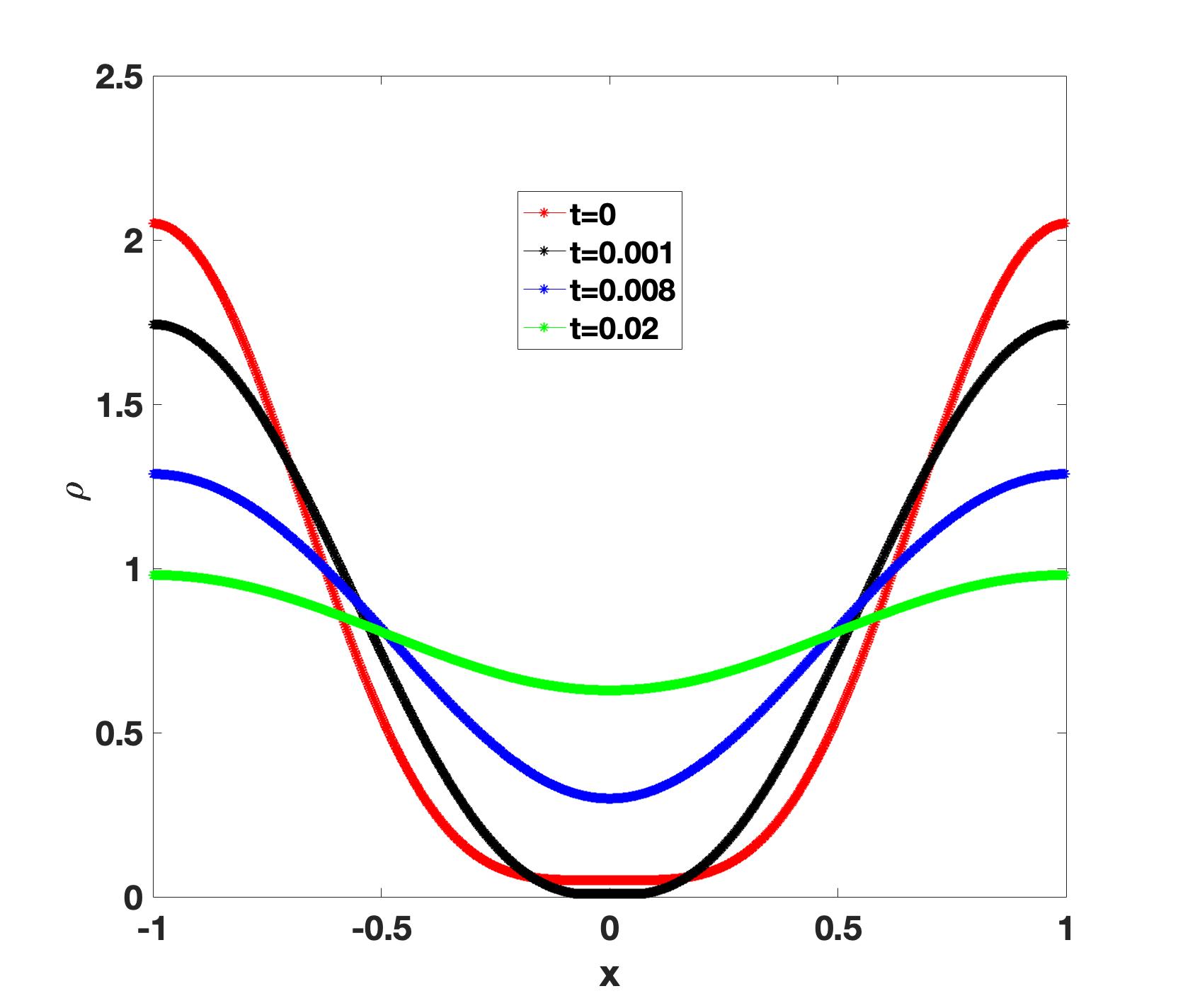}}
\subfigure[$\eps=10^{-2}$ and  $\delta t=10^{-4}$]{
\includegraphics[width=0.45\textwidth,clip==]{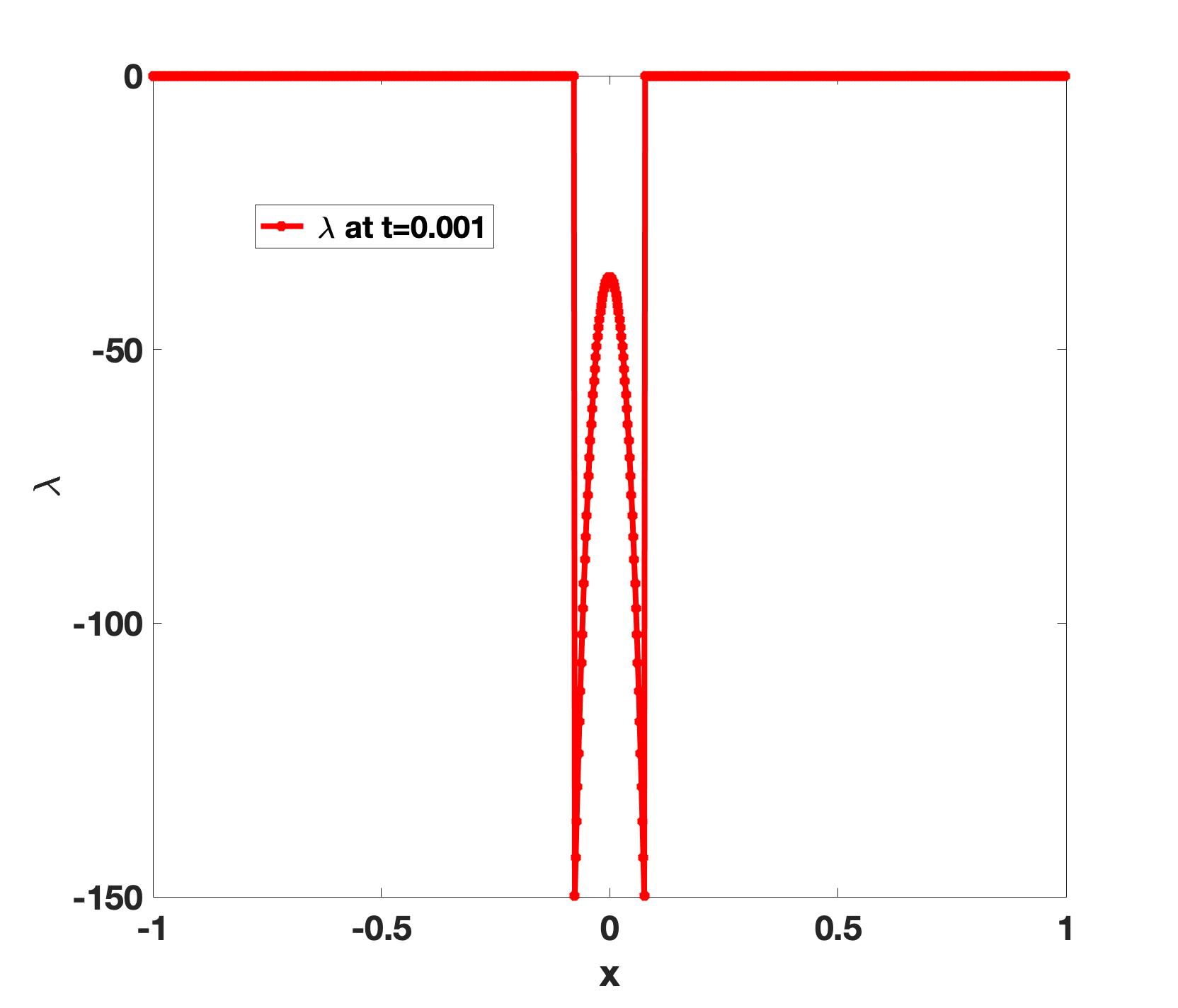}}
\subfigure[$\eps=10^{-4}$ and  $\delta t=2\times 10^{-7}$]{
\includegraphics[width=0.45\textwidth,clip==]{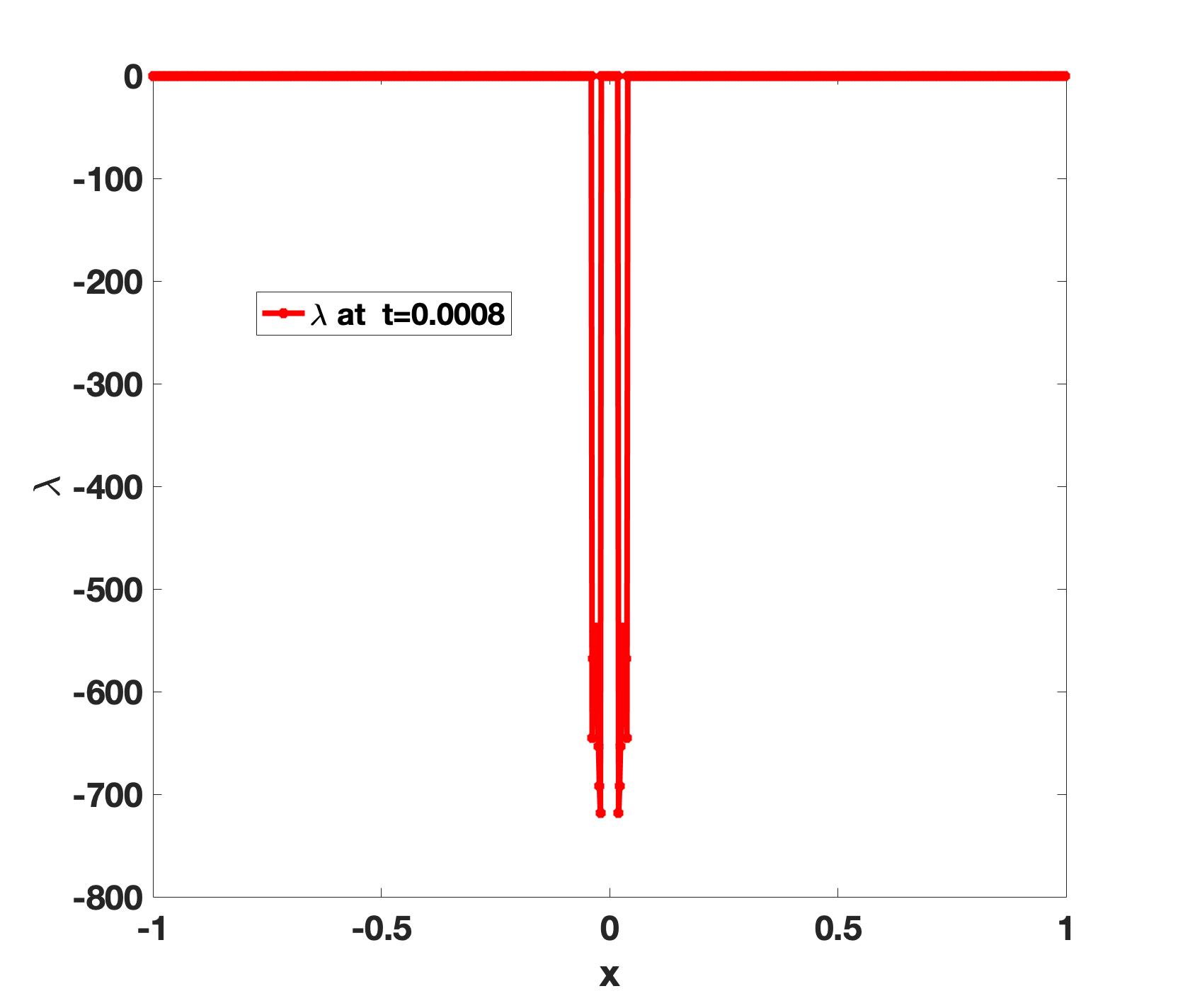}}
\caption{(a)-(d): numerical solution $ u$ of positivity preserving scheme \eqref{scheme:Lu:eq:LM:1d} for lubrication-type equation \eqref{Lu:eq} in 1D at various time  with different  time steps and $\eps$. (e)-(f): Lagrange multiplier $\lambda$ at time $t=0.001$ and $t=0.0008$ using different time steps and $\eps$.}\label{Lu:1d:numeric}
\end{figure}

Next, we consider an  2D example with the initial condition 
\begin{equation}\label{initial:Lu:2d}
 \begin{split}
&   u(x,y)=\begin{cases} (x-0.5)^2(y-0.5)^2,& x^2+y^2 \leq 0.25,\\
  0,& \mbox{otherwise}\end{cases},
  \end{split}
  \end{equation}
in the domain $[-\pi,\pi)^2$. 

We first take  $f( u)= u$ and use the following usual semi-implicit scheme:
\begin{equation}
 \frac{3{ u}_h^{n+1}(\bz)-4 u_h^n(\bz)+ u_h^{n-1}(\bz)}{2\delta t}+ \mathcal{L}_h^n { u}_h^{n+1} (\bz)=0,\quad\forall \bz\in \Sigma_h.
\end{equation}
 The scheme failed to  converge with $\delta t=10^{-5}$.   However, by using the 2D version of the scheme \eqref{scheme:Lu:eq:LM:1d} with $\eps=0$ and  $128\times 128 $ Fourier modes, correct results can be obtained  with $\delta t=10^{-5}$. In  
 Fig.\;\ref{numerical:Lu:2D}(a-d), we  plot the initial condition and numerical solutions  at $t=0.001, 0.01, 0.1$, while we plot  in  
 Fig.\;\ref{numerical:Lu:2D}(e-f) the Lagrange multipliers $\lambda_h$ at $t=0.001,  0.1$. We observe that the Lagrange multiplier takes   nonzero values at a significant part of the domain which explains why the usual  semi-implicit scheme failed to converge.

\begin{figure}[htbp]
\centering
\subfigure[$ u_h:t=0$]{
\includegraphics[width=0.45\textwidth,clip==]{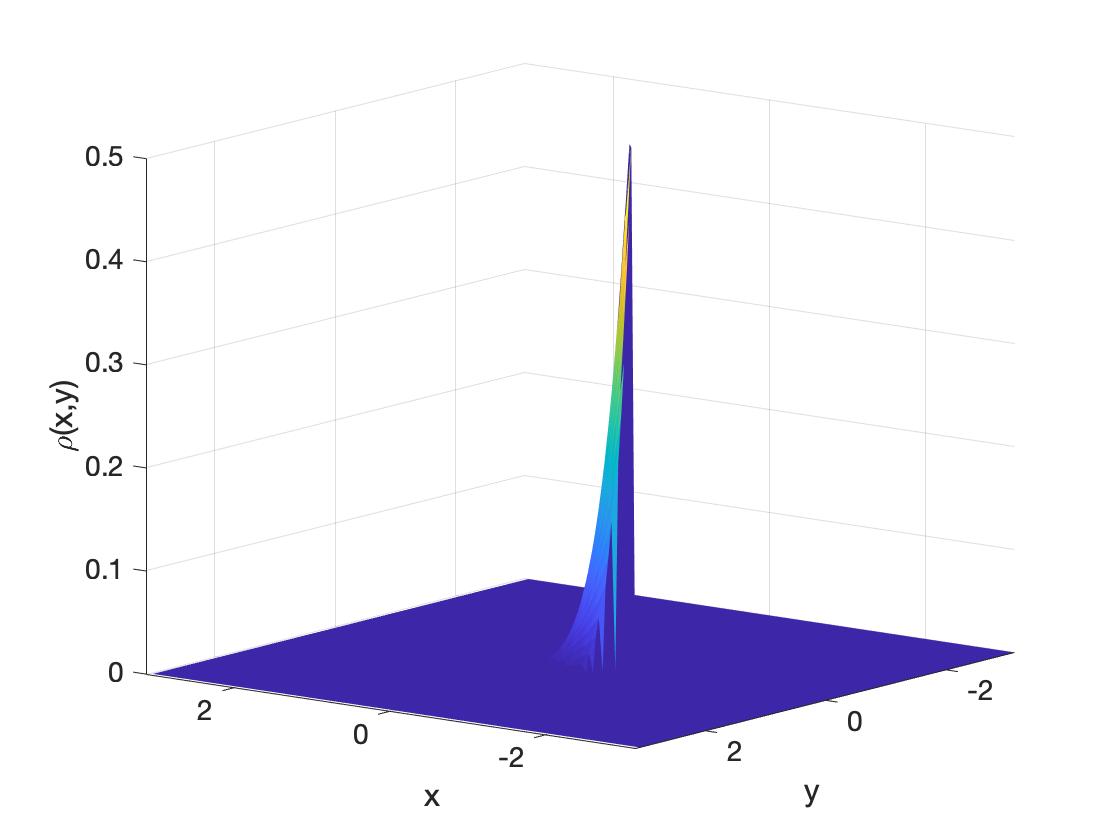}}
\subfigure[$ u_h:t:=0.001$]{
\includegraphics[width=0.45\textwidth,clip==]{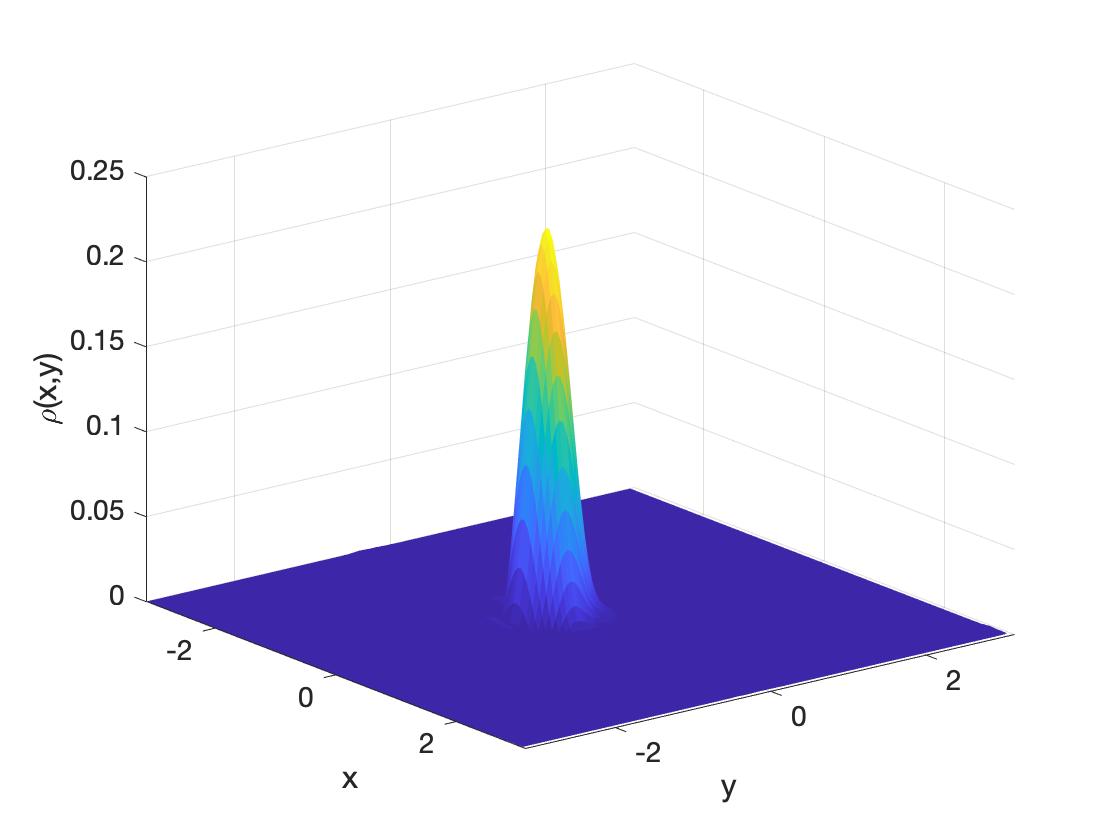}}
\subfigure[$ u_h:t:=0.01$]{
\includegraphics[width=0.45\textwidth,clip==]{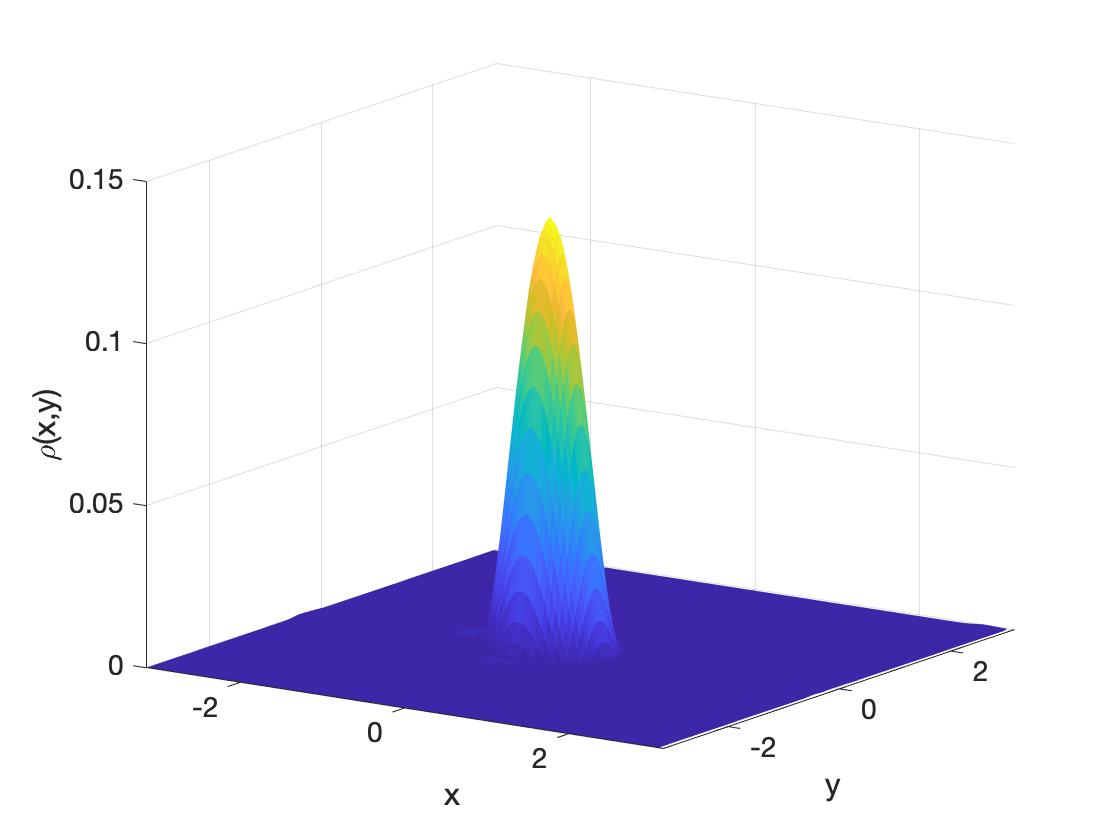}}
\subfigure[$ u_h:t:=0.1$]{
\includegraphics[width=0.45\textwidth,clip==]{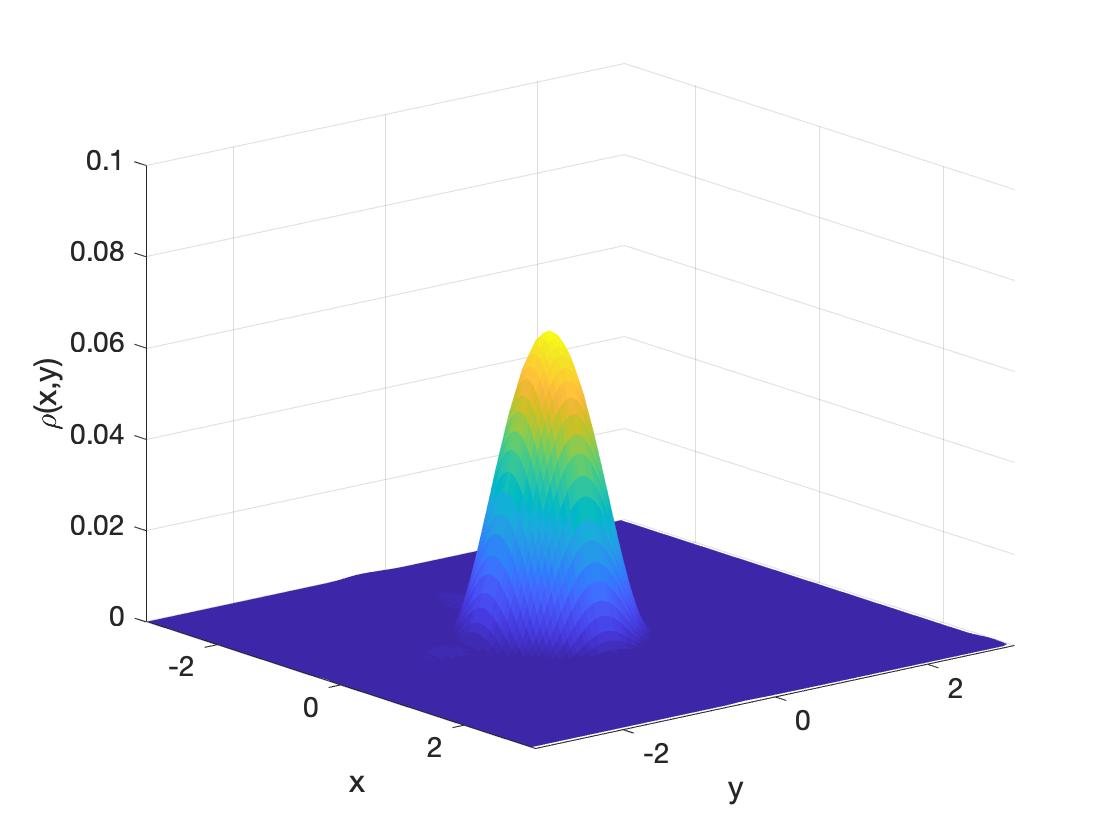}}
\subfigure[$\lambda_h:t:=0.01$]{
\includegraphics[width=0.45\textwidth,clip==]{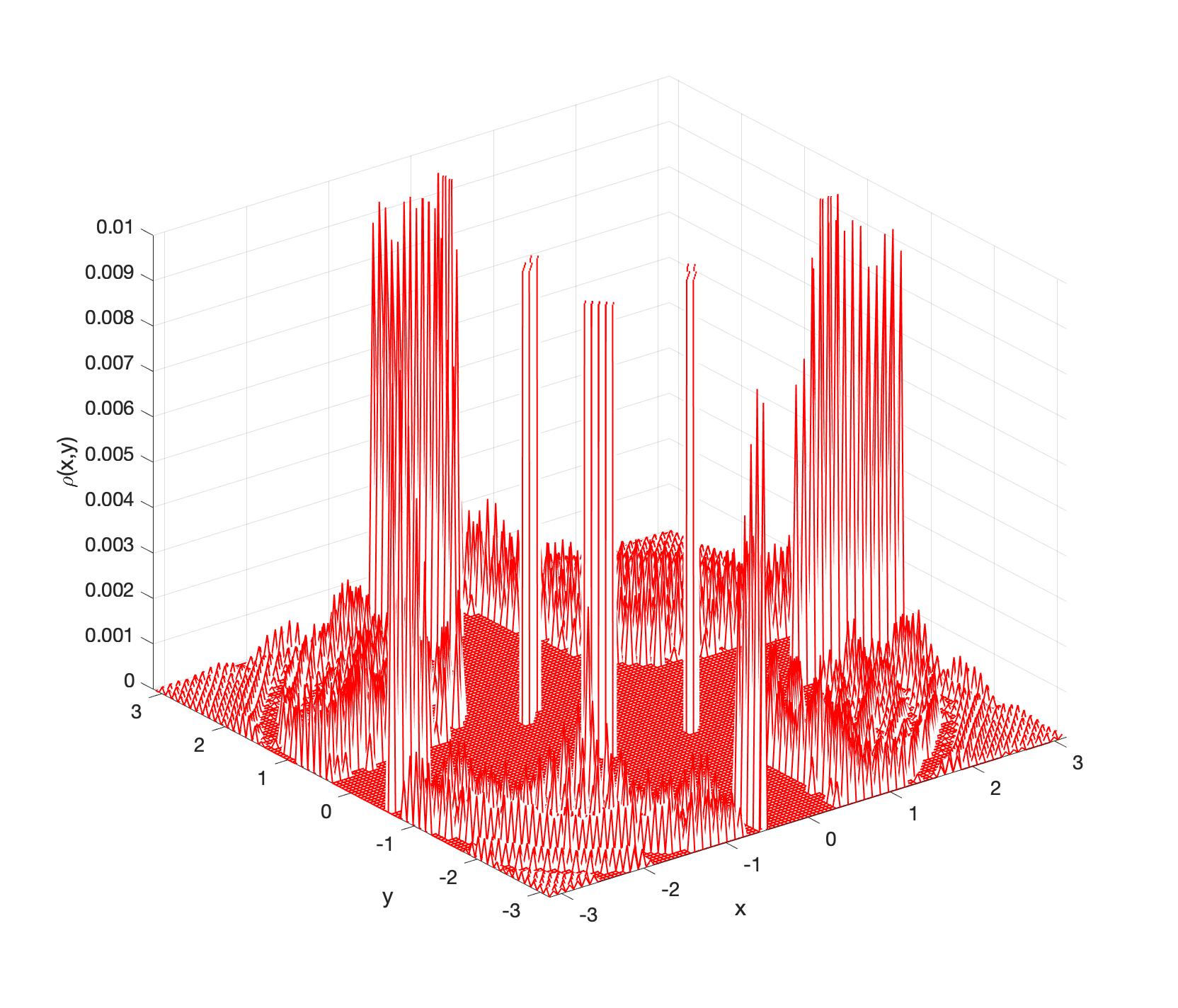}}
\subfigure[$\lambda_h:t:=0.1$]{
\includegraphics[width=0.45\textwidth,clip==]{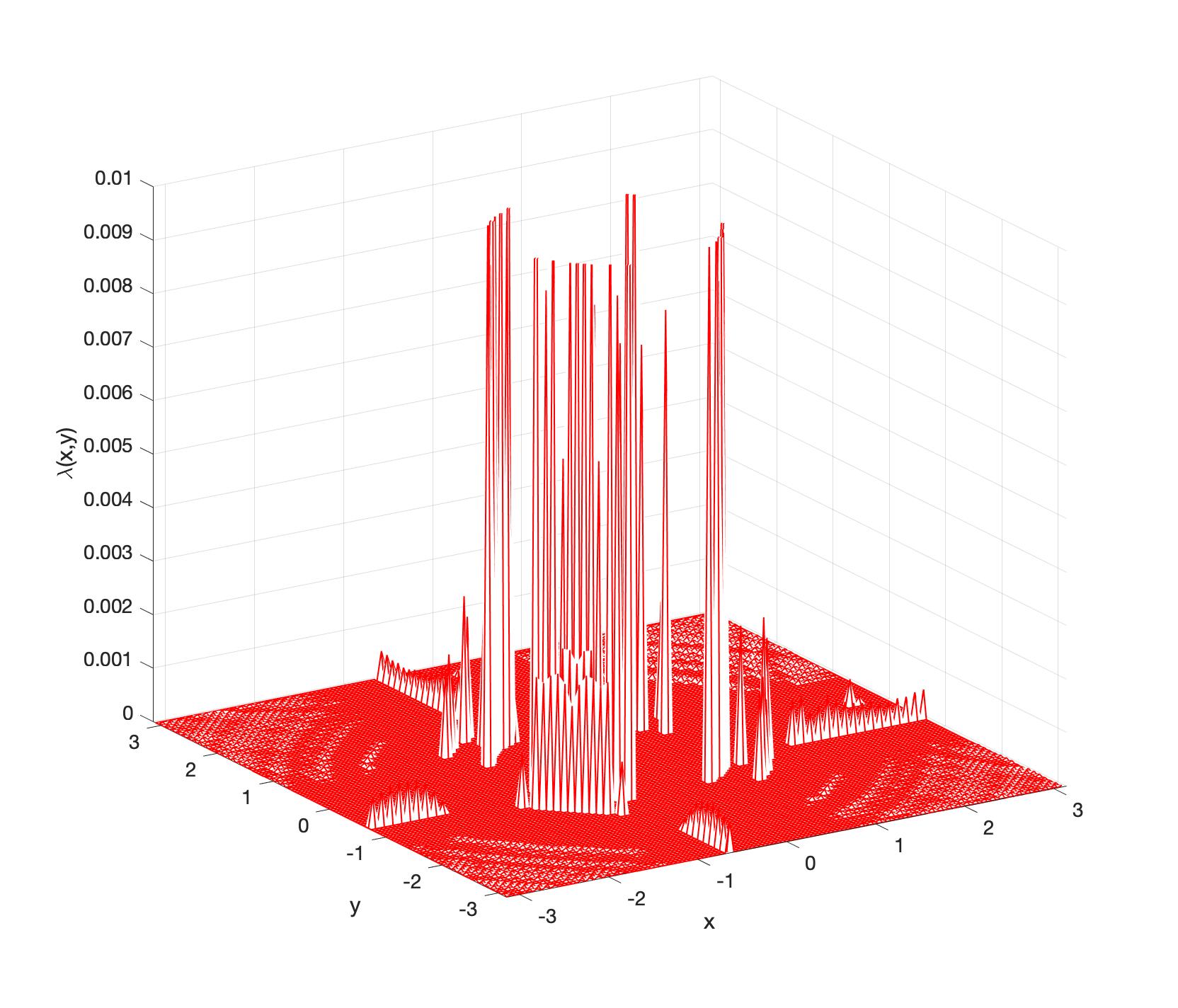}}
\caption{Numerical solutions $u_h$ with positivity preserving scheme  at $t=0, 0.001, 0.01, 0.1$, and  Lagrange multiplier $\lambda_h$ at $t=0.01, 0.1$. }\label{numerical:Lu:2D}
\end{figure}

\section{Concluding remarks}
If a PDE requires its solution to be positive, a generic numerical scheme for the PDE usually can not preserve the positivity. 
We presented in this paper a new  approach to construct positivity preserving schemes for parabolic type equations by a simple modification to  generic numerical schemes.  
More precisely, we introduce a space-time Lagrange multiplier function to enforce the positivity, and expand the underlying PDE using the KKT conditions. The key question  is how to solve the expanded system  efficiently with 
essentially the same cost as the generic numerical scheme.

We  constructed a new class of  positivity preserving schemes by using the predictor-corrector approach to the expanded system: the prediction step can be a generic semi-implicit or implicit scheme, while the correction step 
is used to enforce the positivity and can be implemented as a simple pointwise update with negligible cost. This new approach is not restricted to any particular spatial discretization and can be combined with various time discretization schemes. It can be 
applied to  a large class of parabolic PDEs which require solutions to be positive. It is also non intrusive as you can easily modify your non-positivity preserving schemes for them to become positivity preserving. 
In addition, we also presented a modification to the above approach so that the schemes can also preserve mass  
if the underlying PDE is  mass conserving. 

An interesting and useful observation is that the ad-hoc cut-off approach can be interpreted as a special case of our predictor-corrector approach. Hence, it provides a different justification for the  cut-off approach, moreover  allows us to modify the cut-off approach so that it becomes mass conserving, and opens new avenue for further exploration.

We established  stability results for the first- and second-order schemes based on the new approach under a general setting, and  presented ample numerical experiments to  validate the new approach. Our numerical results indicate that the new approach is very effective for the variety of  problems that we tested.

\bibliographystyle{siamplain}
 \bibliographystyle{plain}
\bibliography{references}
\end{document}